\documentclass[12 pt]{amsart}

\usepackage{hyperref}
\usepackage{etex}
\usepackage[shortlabels]{enumitem}
\usepackage{amsmath}
\usepackage{amsxtra}
\usepackage{amscd}
\usepackage{amsthm}
\usepackage{adjustbox}
\usepackage{amsfonts}
\usepackage{amssymb}
\usepackage{eucal}
\usepackage[all]{xy}
\usepackage{graphicx}
\usepackage{tikz-cd}
\usepackage{mathrsfs}
\usepackage{subfiles}
\usepackage{mathpazo}
\usepackage[colorinlistoftodos, textsize=tiny]{todonotes}
\setlength{\marginparwidth}{2cm}
\usepackage{morefloats}
\usepackage{pdfpages}
\usepackage{thm-restate}
\usepackage[percent]{overpic}
\usepackage[utf8]{inputenc}
\usepackage{epigraph}
\usepackage{csquotes}
\usepackage[margin=1in]{geometry}
\usepackage{adjustbox}
\usepackage{microtype}
\usepackage{stmaryrd}
\usepackage{tikz}
\usetikzlibrary{shapes,arrows.meta}
\usepackage{tikz-3dplot} % For 3D drawing capabilities
\usetikzlibrary{fadings}
\usetikzlibrary{decorations.pathmorphing, decorations.markings}
\usepackage{appendix}

\usepackage{verbatim}
\usepackage{stmaryrd}
\usepackage{scalerel}
\usepackage{stackengine}
\stackMath
\newcommand\reallywidehat[1]{%
\savestack{\tmpbox}{\stretchto{%
  \scaleto{%
    \scalerel*[\widthof{\ensuremath{#1}}]{\kern-.6pt\bigwedge\kern-.6pt}%
    {\rule[-\textheight/2]{1ex}{\textheight}}%WIDTH-LIMITED BIG WEDGE
  }{\textheight}% 
}{0.5ex}}%
\stackon[1pt]{#1}{\tmpbox}%
}
\parskip 1ex

\usepackage{mathtools}

\graphicspath{ {images/} }

\RequirePackage{color}
\definecolor{myred}{rgb}{0.75,0,0}
\definecolor{mygreen}{rgb}{0,0.5,0}
\definecolor{myblue}{rgb}{0,0,0.65}

\usepackage{color}

\usepackage{hyperref}
\hypersetup{citecolor=blue}
\usepackage{tikz}
\usetikzlibrary{matrix,arrows,decorations.pathmorphing}

%\numberwithin{equation}{subsection} 
%\numberwithin{figure}{subsection} 

\theoremstyle{plain}
\newtheorem{theorem}[subsection]{Theorem}

\newtheorem{proposition}[subsection]{Proposition}
\newtheorem{proposition-definition}[subsection]{Proposition-Definition}
\newtheorem{lemma}[subsection]{Lemma}
\newtheorem{corollary}[subsection]{Corollary}

\theoremstyle{definition}
\newtheorem{definition}[subsection]{Definition}
\newtheorem{remark}[subsection]{Remark}

\newtheorem{question}[subsection]{Question}
\newtheorem{conjecture}[subsection]{Conjecture}

\theoremstyle{remark}

\numberwithin{equation}{section}
\newcommand\nc{\newcommand}
\nc\on{\operatorname}
\nc\renc{\renewcommand}
\DeclareMathOperator\rk{rk}

\DeclareMathOperator\id{id}
\DeclareMathOperator\Hom{Hom}
\DeclareMathOperator\Gr{Gr}

\DeclareMathOperator\GL{GL}
\DeclareMathOperator\SL{SL}
\DeclareMathOperator\PGL{PGL}

\DeclareMathOperator{\Pic}{Pic}

\DeclareMathOperator{\LS}{LS}
\DeclareMathOperator{\VMHS}{VMHS}
\DeclareMathOperator{\MHS}{MHS}

\DeclareMathOperator{\Gal}{Gal}

\DeclareMathOperator{\PSL}{PSL}
\DeclareMathOperator{\VHS}{VHS}

%font stuff
%\newcommand\nc{\newcommand}
\nc\mf\mathfrak
\nc\mc\mathcal
\nc\mb\mathbb
\nc\msf\mathsf
\nc\mscr\mathscr

\newcommand{\Z}{\mathbb{Z}}
\newcommand{\Q}{\mathbb{Q}}
\newcommand{\R}{\mathbb{R}}
\newcommand{\C}{\mathbb{C}}

%\titleformat{\section}
%{\normalfont\bfseries}
%{\thesection.}{0.5em}{}

%\titleformat{\subsection}
%{\normalfont\bfseries}
%{\thesubsection.}{0.5em}{}

%\titleformat{\subsubsection}[runin]
%{\normalfont\bfseries}
%{\thesubsubsection.}{0.5em}{}

\title{On exact sequences of Hodge theoretic fundamental groups}
\author{Simon Shuofeng Xu}
\date{\today}

\begin{document}

\maketitle
\setcounter{tocdepth}{1}

\begin{abstract}
	The goal of this paper is to first define a Hodge theoretic fundamental group for smooth connected complex algebraic varieties and then prove and study a right exact sequence of Hodge theoretic fundamental groups associated to a smooth projective family of algebraic varieties $f\colon X\to B$. In particular, we study when this right exact sequence is exact, relate this question to some prior results in non-abelian Hodge theory, and give an obstruction to splitting in terms of \'{e}tale fundamental groups. The main examples we consider in this note is the universal curve $f\colon \mc{C}_g\to \mc{M}_g$ and the moduli space of degree $1$ line bundles on the universal curves $p\colon \Pic^1_{\mc{C}_g/\mc{M}_g}\to \mc{M}_g$.
\end{abstract}
\tableofcontents

\section{Introduction}
In this note, we define and study a notion of a Hodge theoretic fundamental group. Let $X$ be a smooth connected complex algebraic variety. Consider the category of $\VMHS(X)$ whose objects are admissible, graded-polarizable $\Q$-variations of mixed Hodge structure on $X$ whose underlying local systems are defined over $\Z$. Any choice of a base point $x\in X(\C)$ defines a fiber functor $F_x$ from this category to the category of $\Q$-vector spaces, and the pair $(\VMHS(X),F_x)$ is a $\Q$-linear neutral Tannakian category. 

\begin{definition}
	We define the \textit{Hodge theoretic fundamental group} of $X$ with respect to some base point $x\in X(\C)$ to be the Tannakian fundamental group $\pi_1(\VMHS(X),F_x)$.
\end{definition}
\begin{remark}
	The term ``Hodge theoretic fundamental group" has appeared in the literature before (see e.g. \cite{Ara10} and \cite{Jac22}) and it was used to refer to a $\pi_1(\LS^{\text{hdg}}(X))$, an object which we are about to introduce, except the local systems considered by Arapura does not necessarily have integral structures. As we will see, our Hodge theoretic fundamental is bigger and contains $\pi_1(\LS^{\text{hdg}}(X))$ as a normal subgroup.\footnote{We thank the anonymous reviewer for alerting us to the paper of Arapura \cite{Ara10}.}
\end{remark}

%In this note, we continue to study the notion of a Hodge theoretic fundamental group studied in \cite{Xu24}. Let $X$ be a smooth connected complex algebraic varieties. Consider the category of variation of mixed Hodge structures $\VMHS(X)$ whose objects consists of admissible, graded-polarizable $\Q$-variation of mixed Hodge structure on $X$ whose underlying local system is defined over $\Z$. Any choice of base point $x\in X(\C)$ defines a fiber functor $F_x$ from this category of the category of $\Q$-vector space, and the pair $(\VMHS(X),F_x)$ is a $\Q$-linear Tannakian category. 

Let $\LS(X)$ be the category of $\Q$-local systems on $X$ with an underlying integral structure. There's an obvious forgetful functor of Tannakian categories from $\VMHS(X)\to \LS(X)$, and the Tannakian subcategory generated by the essential image of this functor is the subcategory of $\LS(X)$ whose objects are local systems that are subquotients of local systems underlying variations of mixed Hodge structure in $\VMHS(X)$. Following the notation in \cite{DE22}, we denote this subcategory $\LS^\text{Hdg}(X)$, and via Tannakian duality, we have a closed embedding $\pi_1(\LS^\text{Hdg}(X),F_x)\to \pi_1(\VMHS(X),F_x)$. On the other hand, the constant variations of mixed Hodge structure form a sub-Tannakian category of $\VMHS(X)$ which is isomorphic to the category of graded-polarizable $\Q$-mixed Hodge structures $\MHS$, and so Tannakian duality gives us a map $\pi_1(\VMHS(X), F_x)\to \pi_1(\MHS,F)$, where $F$ is the forgetful functor from $\MHS$ to the category of $\Q$-vector spaces. From now on, we will omit the fiber functor when the context is clear. Andr\'{e} \cite[Theorem C. 11]{And21} showed that the following sequence is a (split) short exact sequence $$1\to \pi_1(\LS^\text{Hdg}(X))\to \pi_1(\VMHS(X))\to \pi_1(\MHS)\to 1.$$ See also the work of D'Addezio and Esnault \cite[Corollary 4.7]{DE22} who worked with $\Q$-variations of mixed Hodge structure with no underlying integral structure.\footnote{Arapura also attempted to prove the exactness of this sequence \cite[Theorem 2.6]{Ara10}, but a gap was found in his work by D'Addezio and Esnault \cite[Remark 4.8]{DE22}.} 

\begin{remark}
	We have chosen to work with variation of mixed Hodge structures with an underlying integral structure, mainly because that assumption is needed when we discuss non-abelian Hodge locus in section \ref{section: non-abelian Hodge locus} and it also makes the computation we are going to do in section \ref{section: Pic^1} more tractable.
\end{remark}

In section \ref{section: SES}, we first generalize this short exact sequence to the relative setting. Throughout this paper, by \textit{a smooth projective family of algebraic varieties}, we mean a smooth projective morpshim $f\colon X\to B$ between smooth, connected complex algebraic varieties with connected fibers. Fix some base point $b\in B$ and $x\in X_b:=f^{-1}(b)$. We have the following proposition:
\begin{theorem}
	 The following natural sequence of Hodge theoretic fundamental groups induced by $f\colon X\to B$ is an exact sequence of affine group schemes: \begin{equation}\label{Hodge SES}
		\pi_1(\LS^\text{Hdg}(X_b))\to \pi_1(\VMHS(X))\to \pi_1(\VMHS(B))\to 1
	\end{equation}
\end{theorem}

For the rest of the paper, we further study this sequence $(\ref{Hodge SES})$. In particular, we focus on the questions of when the first map $\pi_1(\LS^\text{Hdg}(X_b))\to \pi_1(\VMHS(X))$ is a closed embedding, and when the sequence has no section. The main examples we will be working with is the universal curve $f\colon \mc{C}_g\to \mc{M}_g$ of genus $g$ and the moduli space of degree $1$ line bundles on the universal curve $p\colon \Pic^1_{\mc{C}_g/\mc{M}_g}\to \mc{M}_g$. Here we view these Deligne-Mumford stacks as orbifolds so we may talk about their (orbifold) fundamental group, and local systems as well as variations of (mixed) Hodge structure on them (for an exposition of the orbifold approach to $\mc{M}_g$, see \cite[Chapter XII, section 4]{ACG11}). In particular, we prove the following theorem
\begin{theorem}\label{Main theorem}\hfill
	\begin{enumerate}
		\item For any $g\geq 2$, there exists some curve $C$ of genus $g$ such that the sequence $$1\to \pi_1(\LS^\text{Hdg}(C))\to \pi_1(\VMHS(\mc{C}_g))\to \pi_1(\VMHS(\mc{M}_g))\to 1$$ is not left exact (see Theorem \ref{Theorem: non-injectivity following Lam});
		\item When $g\geq 3$, the map $\pi_1(\VMHS(\mc{C}_g))\to \pi_1(\VMHS(\mc{M}_g))$ induced by $f\colon \mc{C}_g\to \mc{M}_g$ and the map $\pi_1(\VMHS(\Pic^1_{\mc{C}_g/\mc{M}_g}))\to \pi_1(\VMHS(\mc{M}_g))$ induced by $p\colon \Pic^1_{\mc{C}_g/\mc{M}_g}\to \mc{M}_g$ do not splits (see Corollary \ref{Corollary: Hodge theoretic section question for M_g}).
		\end{enumerate}
\end{theorem}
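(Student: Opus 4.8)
The plan is to prove the two statements by entirely different mechanisms: part (1) rests on non-abelian Hodge theory (the mapping class group dynamics on character varieties), while part (2) rests on the theory of characteristic classes of surface bundles (Morita's computations) accessed through étale fundamental groups. For part (1), I would first reduce exactness, in the sense defined in the introduction, to a purely Tannakian condition. By the standard dictionary, the arrow $\pi_1(\LS^{\text{Hdg}}(C)) \to \pi_1(\VMHS(\mc{C}_g))$ is a closed embedding if and only if the restriction functor $\VMHS(\mc{C}_g) \to \LS^{\text{Hdg}}(C)$, sending a variation $W$ to the underlying local system of $W|_C$, is surjective in the Tannakian sense, i.e. every object of $\LS^{\text{Hdg}}(C)$ is a subquotient of some $W|_C$. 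Thus non-exactness amounts to producing a single Hodge-theoretic local system on $C$ that arises as no such subquotient.

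The obstruction I would exploit is mapping-class-group equivariance. Writing the orbifold Birman sequence $1 \to \pi_1(C) \to \pi_1(\mc{C}_g) \to \Mod_g \to 1$, any $W \in \VMHS(\mc{C}_g)$ is a representation of the whole group $\pi_1(\mc{C}_g)$, so conjugation by lifts of mapping classes exhibits an isomorphism between $W|_C$ and its translates; hence the isomorphism class $[W|_C]$ is fixed by the $\Mod_g$-action on isomorphism classes of local systems on $C$. Its finite multiset of Jordan--H\"older constituents is therefore $\Mod_g$-stable, so each constituent has finite $\Mod_g$-orbit. Consequently any \emph{simple} Hodge-theoretic local system on $C$ with infinite $\Mod_g$-orbit cannot be a subquotient of any $W|_C$. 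I would then invoke the construction following Lam to produce, for each $g \geq 2$, a curve $C$ carrying an irreducible complex variation of Hodge structure whose class has infinite $\Mod_g$-orbit. This existence statement is exactly where non-abelian Hodge theory enters and is the main obstacle of part (1): variations of Hodge structure are rigid and tend to be mapping-class-group-finite, so producing one with infinite orbit is the delicate input.

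For part (2), I would first reduce the universal-curve statement to the $\Pic^1$ statement. The Abel--Jacobi morphism $\mc{C}_g \to \Pic^1_{\mc{C}_g/\mc{M}_g}$, $p \mapsto [\mathcal{O}_C(p)]$, is canonical (needing no base point) and defined over $\mc{M}_g$, so by functoriality of $\pi_1(\VMHS(-))$ a section of $\pi_1(\VMHS(\mc{C}_g)) \to \pi_1(\VMHS(\mc{M}_g))$ would push forward to a section of $\pi_1(\VMHS(\Pic^1_{\mc{C}_g/\mc{M}_g})) \to \pi_1(\VMHS(\mc{M}_g))$; it therefore suffices to rule out the $\Pic^1$ case. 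Next I would descend to étale fundamental groups: finite-monodromy $\Q$-local systems form a Tannakian subcategory of $\VMHS$ closed under subquotients, so $\pi_1(\VMHS(-))$ surjects compatibly onto the profinite completion $\pi_1^{\text{\'et}}(-)$, and a Hodge-theoretic section yields a section of the profinite homotopy sequence $1 \to H_1(C;\hat{\Z}) \to \pi_1^{\text{\'et}}(\Pic^1_{\mc{C}_g/\mc{M}_g}) \to \pi_1^{\text{\'et}}(\mc{M}_g) \to 1$, or at worst a splitting of one of its finite push-outs. Controlling which push-out, using the $\Sp_{2g}$-irreducibility of $H_1(C;\Z/\ell)$ to pin down the $\Mod_g$-stable subgroup the section can meet in the fiber, is a technical point to be handled with care, since a group-scheme section need not a priori respect the finite-object subcategories.

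Finally I would show the resulting profinite extension does not split. Its class is the étale incarnation of the obstruction to a universal degree-$1$ line bundle on the universal curve, living in $H^2(\Mod_g; H_1(C;\Z))$ (equivalently the torsor class in $H^1(\Mod_g; H_1(C;\Z))$), which by Morita's computation of mapping-class-group cohomology with symplectic coefficients is a nonzero torsion class for $g \geq 2$ --- morally the class of order $2g-2$ recording that only multiples of the canonical degree are realized universally. Since a section would force this class, or an appropriate mod-$\ell$ reduction of it, to vanish, no section exists, and the $\mc{C}_g$ case follows from the Abel--Jacobi reduction. The main obstacle in part (2) is twofold: the bookkeeping of the previous paragraph, and the identification of the $\Pic^1$-extension class with Morita's nonzero class. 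Notably the rational realization vanishes, since $H^1(\Mod_g; H_1(C;\Q)) = 0$, so the obstruction is genuinely torsion and must be detected integrally or $\ell$-adically --- which is precisely why the étale fundamental group, rather than the Hodge or Betti realization over $\Q$, is the correct place to locate it.
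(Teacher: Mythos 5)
Your part (1) reaches the right conclusion by a genuinely different mechanism from the paper's. The paper proves Lemma \ref{Lemma: from mixed to Lam} --- an irreducible subquotient of the restriction of an object of $\VMHS(\mc{C}_g)$ must underlie a pure $\Q$-variation of Hodge structure on \emph{every} fiber (via semisimplification and the non-abelian Hodge correspondence) --- and then derives a contradiction with Takeuchi's finiteness theorem for arithmetic Fuchsian groups of fixed genus \cite{Tak83}. You instead use $\Mod_g$-equivariance: via the Birman sequence, the isomorphism class of $W|_C$ is $\Mod_g$-fixed for every $W\in\VMHS(\mc{C}_g)$, so every simple subquotient of such a restriction has finite $\Mod_g$-orbit, while the uniformizing local system of a Shimura curve is Hodge-theoretic with infinite orbit. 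This is the finite-orbit mechanism in the spirit of Landesman--Litt \cite{LL24}, which the paper mentions only as an alternative in a remark; it is arguably more elementary, since it needs neither Takeuchi's theorem nor the non-abelian Hodge input of the lemma, and it works for all $g\geq 2$ once combined with the cover trick. Two points you should nail down, both at the level of detail the paper itself partly elides: the infinite-orbit object must genuinely be an object of $\LS^{\text{Hdg}}(C)$, i.e.\ a $\Q$-local system with integral structure (take the weight-one rank-$4$ local system of the universal family of abelian surfaces with quaternionic multiplication; its complexification is a sum of copies of the uniformizing system, so the infinite orbit is inherited); and the infinite orbit is not the ``delicate input'' you describe --- it is elementary Teichm\"uller theory (the uniformizing representation determines the marked curve, and the stabilizer of a point of Teichm\"uller space is the finite group $\Aut(C)$). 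The delicate input is arithmeticity/motivicity, i.e.\ the existence of Shimura curves of genus $2$, which both you and the paper take from Lam.

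For part (2) your high-level route agrees with the paper's (reduce to non-splitting of \'etale fundamental group sequences), with two differences: the Abel--Jacobi reduction of the $\mc{C}_g$-case to the $\Pic^1$-case is a valid addition not in the paper, and you propose to \emph{prove} the \'etale non-splitting via Morita's torsion class rather than cite it as the paper does (\cite{Hai11}, \cite{LLSS}); your mod-$\ell$ detection of that class is the standard and correct way to do this (a small slip: the rational vanishing you want is $H^2(\Mod_g;H_1(C;\Q))=0$, not $H^1$, or simply the observation that the class is torsion). The genuine gap is the descent step. You correctly flag that a group-scheme section need not a priori respect the finite-object subcategories, but your proposed fix --- ``a splitting of one of its finite push-outs,'' controlled by $\Sp_{2g}$-irreducibility --- does not resolve this and is not how it is resolved. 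The paper resolves it in two ways: for Hodge-theoretic sections, by the lemma that any additive tensor functor between categories of local systems preserves finite-monodromy objects, via Nori's criterion $P(\mathbb{V})=Q(\mathbb{V})$ (\cite{Nor76}, \cite{BHS00}); and for \emph{arbitrary} group-theoretic sections --- which is what Theorem \ref{Main theorem}(2) asserts --- by Andr\'e's identification $\pi_1^{\text{\'et}}(X)=\pi_0(\pi_1(\VMHS(X)))$, after which descent is automatic because any homomorphism of affine group schemes carries the identity component into the identity component. Without one of these two inputs your section of $\pi_1(\VMHS(\Pic^1_{\mc{C}_g/\mc{M}_g}))\to\pi_1(\VMHS(\mc{M}_g))$ never produces a section of the profinite sequence, and the Morita argument has nothing to apply to; with either of them, the rest of your argument goes through.
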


Now part $(1)$ of Theorem \ref{Main theorem} says that in general the sequence (\ref{Hodge SES}) is not short exact. However, one can also show that at least after passing to the pro-reductive quotient, under some assumptions, this sequence is exact for an analytically very general choice of base point $b\in B$. More precisely, we prove the following
\begin{theorem}
	Suppose $f\colon X\to B$ is a smooth projective family of algebraic varieties such that $\pi_1^{\text{\'{e}t}}(X_b)$ injects into $\pi_1^{\text{\'{e}t}}(X)$. Let $\mathcal{G}^\text{red}$ be the pro-reductive quotient of an affine group scheme $\mathcal{G}$. Define $$NG(X/B)^\text{red}:=\left\{b\in B:(\pi_1(\LS^\text{Hdg}(X_b)))^\text{red}\to (\pi_1(\VMHS(X)))^\text{red}\text{ is not injective}\right\}.$$ Then $NG(X/B)^{\text{red}}$ is a countable union of closed analytic subsets of $B$.
\end{theorem}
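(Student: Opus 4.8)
The plan is to translate the non-injectivity of the reductive quotient map into the existence of a ``new'' semisimple Hodge-theoretic local system on the fibre, and then to show that, as $b$ varies, such local systems are produced along countably many flat families, each contributing a Hodge locus. First I would record the standard Tannakian criterion: a homomorphism $u\colon G\to H$ of affine $\Q$-group schemes with $G$ pro-reductive is a closed immersion if and only if every irreducible representation of $G$ is a subquotient of $u^*W$ for some representation $W$ of $H$. Applying this to $G=(\pi_1(\LS^{\text{Hdg}}(X_b)))^{\text{red}}$ and $H=(\pi_1(\VMHS(X)))^{\text{red}}$, and unwinding the functor that sends a VMHS on $X$ to the underlying local system of its restriction $\iota_b^*$ to $X_b$, one finds that $b\in NG(X/B)^{\text{red}}$ precisely when there is an irreducible local system $L$ on $X_b$ which underlies a polarizable $\Z$-VHS (for $X_b$ smooth projective the reductive part of $\LS^{\text{Hdg}}(X_b)$ is generated by such, by Deligne semisimplicity and Corlette--Simpson) but which is not a direct summand of $\iota_b^*W$ for any $W\in\LS^{\text{Hdg}}(X)$.

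Next I would exploit the integral structure to obtain countability. Since $X_b$ is smooth projective, $\pi_1(X_b)$ is finitely generated, so for each rank $n$ the set of conjugacy classes of representations $\pi_1(X_b)\to\GL_n(\Z)$ is countable; in particular there are only countably many isomorphism classes of rank-$n$ integral local systems on $X_b$, and every candidate $L$ above is one of these. To make this uniform in $b$ I would use Ehresmann's theorem: over a contractible $U\subseteq B$ the family is topologically a product, so each integral local system $L^\rho$ on a reference fibre extends to $X|_U$ as $L^\rho\boxtimes\underline{\Q}$ and restricts to a flat family $\{L^\rho_b\}_{b\in U}$; globally these organize into a local system of countable sets over $B$ whose connected components I will call \emph{branches}. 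Two observations then simplify the bookkeeping. The property ``$L^\rho_b$ is a summand of some $\iota_b^*W$'' is \emph{constant} along a branch: for fixed $W\in\LS^{\text{Hdg}}(X)$ the restriction $\iota_b^*W$ is independent of $b\in U$ up to isomorphism (all fibre inclusions into $X|_U$ are homotopic), so the dimension of the relevant $\Hom$-space is locally constant, and there are only countably many such $W$. Hence each branch either comes from $X$ everywhere or nowhere, and branches of the former type never contribute to $NG(X/B)^{\text{red}}$. Finally, the hypothesis $\pi_1^{\text{\'{e}t}}(X_b)\hookrightarrow\pi_1^{\text{\'{e}t}}(X)$ enters here: it guarantees that every finite-monodromy local system on $X_b$ is a subquotient of the restriction of a finite-monodromy local system on $X$, so that the branches with finite monodromy --- whose Hodge locus would otherwise be all of $B$ --- all come from $X$ and may be discarded.

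It then remains to treat the branches with infinite monodromy that do not come from $X$, and to show that for each such branch the locus $\{b\in B : L^\rho_b\text{ underlies a polarizable }\Z\text{-VHS}\}$ is a countable union of closed analytic subsets; this is the heart of the matter and where I expect the main difficulty. The point is to realize ``$L^\rho_b$ underlies a polarizable $\Z$-VHS of a fixed type $\underline{h}$'' as a genuine Hodge-type condition: following Simpson, being a complex VHS is the condition of being a fixed point of the $\C^\times$-action on the relative de Rham moduli space $M_{\text{dR}}(X/B,n)\to B$, a relatively closed analytic (indeed algebraic) condition, and imposing the integral and polarization constraints cuts out, fibrewise, a Hodge locus. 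I would then invoke the theory of Hodge loci --- Cattani--Deligne--Kaplan in the pure case, together with its extension to the admissible graded-polarizable setting --- to conclude that, for each fixed type $\underline{h}$, this locus is a countable union of closed analytic (even algebraic) subvarieties. The two genuine technical obstacles are: first, manufacturing an honest polarizable variation over $B$ (or over the covering parametrizing the branch) whose Hodge locus is exactly the locus in question, so that Cattani--Deligne--Kaplan applies; and second, descending from the covering space that carries $L^\rho$ down to $B$ while preserving global closedness --- here the algebraicity half of Cattani--Deligne--Kaplan is what rescues closedness, since the image of an algebraic subvariety under the gluing implicit in the monodromy of the branch remains a countable union of closed analytic subsets of $B$. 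Taking the union over all ranks $n$, all types $\underline{h}$, and the countably many branches keeps us within countable unions of closed analytic subsets, which yields the claim.
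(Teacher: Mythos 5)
Your Tannakian reduction and the ``branch'' bookkeeping are essentially sound, and in fact run parallel to the paper's argument: your branches, intersected with their VHS loci, are exactly the components of Simpson's non-abelian Hodge locus $NAHL(X/B)=V_{\text{dR}}\cap \mc{M}_{\text{dR}}(X/B,\Z)$, because along a connected set of \emph{integral} VHS points the traces of the monodromy are continuous and integer-valued, hence constant, so the underlying representation is locally constant up to conjugacy. The genuine gap is at what you yourself call the heart of the matter: the claim that the VHS locus of a branch is a countable union of closed analytic subsets of $B$. The tool you propose, Cattani--Deligne--Kaplan, does not apply here: CDK concerns the locus where a given integral class inside a \emph{fixed} polarizable VHS over the base becomes a Hodge class, an abelian problem, whereas ``this flat family of local systems admits a VHS structure'' is not a Hodge-class condition in any ambient variation over $B$ --- that is precisely your unresolved obstacle (a), and in general it cannot be resolved. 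Relatedly, your assertion that being a $\C$-VHS is ``a relatively closed analytic (indeed algebraic) condition'' on $\mc{M}_{\text{dR}}(X/B)$ is false as stated: the $\mathbb{G}_m$-fixed locus is algebraic in the \emph{Dolbeault} space, and its image $V_{\text{dR}}$ in $\mc{M}_{\text{dR}}$ under the non-abelian Hodge homeomorphism is only closed, not obviously analytic, since that correspondence is not holomorphic. The correct input --- and the one the paper uses --- is Simpson's Theorem 12.1: $NAHL(X/B)$ has a canonical structure of reduced analytic space and the projection $NAHL(X/B)\to B$ is \emph{proper}; analyticity plus properness (via the proper mapping theorem) is what makes the images of its countably many components closed analytic subsets of $B$, and properness is also what disposes of your descent obstacle (b), since images of closed sets under non-proper covering-type maps need not be closed.

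A second difference worth noting: the paper uses the hypothesis $\pi_1^{\text{\'{e}t}}(X_b)\hookrightarrow \pi_1^{\text{\'{e}t}}(X)$ not to discard finite-monodromy branches, but via Esnault--Kerz's non-abelian theorem of the fixed part, to show that any local system lying on a component of $NAHL(X/B)$ that \emph{surjects} onto $B$ has finite orbit under $\pi_1(B)$, hence extends to a finite \'{e}tale cover of $X$ and is a summand of the restriction of a VHS local system from $X$. This yields the sharper description of $NG(X/B)^{\text{red}}$ as exactly the image of the non-surjecting components. Your use of the hypothesis is much weaker and, in your formulation, not actually needed (a branch whose VHS locus is all of $B$ contributes $B$ itself, which is still closed analytic); this is not an error, but it signals that your route, even after the missing Simpson input is supplied, establishes only the bare statement and not the structural description the paper's lemma provides.
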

Assuming some standard conjectures in non-abelian Hodge theory, this set $NG(X/B)^\text{red}$ is in fact a countable union of closed algebraic subsets of $B$ (see section \ref{section: Injectivity and non-injectivity}). Therefore, it's natural to ask if this is the case without having to pass to pro-reductive quotients: we can define 
$$NG(X/B):=\left\{b\in B:\pi_1(\LS^\text{Hdg}(X_b))\to \pi_1(\VMHS(X))\text{ is not injective}\right\}.$$ 
\begin{question}
	Is the locus $NG(X/B)$ is always a countable union of closed algebraic subsets of $B$?
\end{question}
In fact, we show that this locus can be empty even in non-isotrivial families:
\begin{theorem}
	For any $g\geq 2$ and any curve $C$ of genus $g$, $\pi_1(\LS^\text{Hdg}(\Pic^1(C))$ may be identified with $\mathbb{G}_a^{2g}\times \widehat{\Z}^{2g}$ and the sequence $$1\to \mathbb{G}_a^{2g}\times \widehat{\Z}^{2g}\to \pi_1(\VMHS(\Pic^1_{\mc{C}_g/\mc{M}_g}))\to \pi_1(\VMHS(\mc{M}_g))\to 1$$ is always short exact.
\end{theorem}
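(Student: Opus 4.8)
The plan is to deduce the short-exactness from the already-established right-exact sequence \eqref{Hodge SES}, applied to $X := \Pic^1_{\mc{C}_g/\mc{M}_g} \to \mc{M}_g$ with fiber $A := \Pic^1(C)$, by showing that the first arrow $\pi_1(\LS^{\text{Hdg}}(A)) \to \pi_1(\VMHS(X))$ is a closed embedding. By the standard Tannakian criterion, a homomorphism of affine group schemes dual to an exact $\otimes$-functor is a closed embedding precisely when every object of the source category is a subquotient of one pulled back from the target; concretely, I must show every $V \in \LS^{\text{Hdg}}(A)$ is a subquotient of $\mathcal{W}|_A$ for some $\mathcal{W} \in \VMHS(X)$. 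Since the class of realized objects is closed under subquotients, duals and tensor products, it suffices to realize a generating set of $\LS^{\text{Hdg}}(A)$, so the first task is to pin down such a set, i.e.\ to prove the identification asserted in the statement.

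For that identification, note that $\pi_1(A) = H_1(C,\Z) \cong \Z^{2g}$ is abelian, so $\pi_1(\LS^{\text{Hdg}}(A))$ is a commutative affine group scheme over $\Q$ and hence the product of its pro-unipotent and pro-reductive parts. For the reductive part I would invoke Deligne's semisimplicity theorem: the graded quotients of any admissible graded-polarizable VMHS are polarizable VHS, whose algebraic monodromy group is semisimple; but the monodromy image is a quotient of the abelian group $\Z^{2g}$, so its Zariski closure is abelian, and a connected abelian semisimple group is trivial. Thus every polarizable VHS on $A$ has finite monodromy, every object of $\LS^{\text{Hdg}}(A)$ is virtually unipotent, and the pro-reductive part is the completion of $\Z^{2g}$ for finite-image representations, namely the profinite completion $\widehat{\Z}^{2g}$ (conversely every finite-monodromy $\Q$-local system defined over $\Z$ underlies a weight-zero VHS of type $(0,0)$, so all of them are Hodge-theoretic). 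The pro-unipotent part is the Malcev completion of $\Z^{2g}$, which is $\mathbb{G}_a^{2g}$, and all unipotent local systems are Hodge-theoretic since they underlie admissible unipotent VMHS by Hain--Zucker theory. This gives $\pi_1(\LS^{\text{Hdg}}(\Pic^1(C))) \cong \mathbb{G}_a^{2g} \times \widehat{\Z}^{2g}$, and tells me that $\LS^{\text{Hdg}}(A)$ is generated, as a Tannakian category, by the finite-monodromy local systems together with one faithful unipotent local system.

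Realizing the finite-monodromy local systems amounts to the \'{e}tale-injectivity hypothesis of the earlier theorem, which for $\Pic^1$ I can verify directly from the abelian-scheme structure. Let $\Jac := \Pic^0_{\mc{C}_g/\mc{M}_g}$ be the relative Jacobian and $\Jac[n]$ its $n$-torsion, a finite \'{e}tale group scheme over $\mc{M}_g$ acting on $X$ by translation. The quotient $c \colon X \to X/\Jac[n]$ is a finite \'{e}tale cover of degree $n^{2g}$ restricting on the fiber to the cover classified by $n\cdot H_1(C,\Z) \subset H_1(C,\Z)$, with deck group $(\Z/n)^{2g}$. As the $(\Z/n)^{2g}$ are cofinal among finite quotients of $\Z^{2g}$, this shows simultaneously that $\widehat{\Z}^{2g} = \pi_1^{\text{\'{e}t}}(A) \hookrightarrow \pi_1^{\text{\'{e}t}}(X)$ and, taking $\mathcal{W} = c_*\Q$ (a weight-zero VHS), that every finite-monodromy local system on $A$ is a subquotient of the restriction of a VMHS on $X$.

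The remaining, and main, point is to realize the universal unipotent local system. Since $\mathbb{G}_a^{2g}$ is generated by its tautological faithful representation, it suffices to exhibit one VMHS $\mathcal{P}$ on $X$ whose restriction to $A$ is the rank-$(2g+1)$ unipotent local system $\mathcal{E}$ in $0 \to H_1(A,\Q) \to \mathcal{E} \to \Q \to 0$ with extension class the identity of $\End(H_1(A,\Q))$ (the monodromy around $\gamma \in \pi_1(A)$ being the transvection by $\gamma$). The candidate is the relative tautological, or Poincar\'{e}, VMHS: writing $\mathbb{V} := R_1\pi_*\Q$ for the weight $-1$ VHS of the abelian scheme $\pi\colon X \to \mc{M}_g$, one wants the admissible extension $0 \to \pi^*\mathbb{V} \to \mathcal{P} \to \Q(0) \to 0$ whose fiberwise class is the tautological element, and whose restriction to $A$ is then exactly $\mathcal{E}$. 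The \emph{crux} I expect is producing this extension globally, since $\Pic^1$ carries no identity section: the fiberwise-universal class is the image of $\mathrm{id} \in \End(\mathbb{V}) = H^0(\mc{M}_g, R^1\pi_*\pi^*\mathbb{V})$, and its liftability to $\mathrm{Ext}^1_{\VMHS(X)}(\Q,\pi^*\mathbb{V}) = H^1(X,\pi^*\mathbb{V})$ is governed by the Leray spectral sequence for $\pi$, the obstruction being $d_2(\mathrm{id}) \in H^2(\mc{M}_g,\mathbb{V})$. I would either verify that this $d_2$-obstruction vanishes, or instead invoke the known existence and admissibility of the tautological family of $1$-motives over an abelian scheme (the Betti realization of the Poincar\'{e} biextension), whose unipotent monodromy is translation-invariant and hence insensitive to the torsor twist. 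Granting this, $\mathcal{E}$ and the finite-monodromy local systems are all realized, their tensor products realize a generating set of $\LS^{\text{Hdg}}(\Pic^1(C))$, the first map is a closed embedding, and combining with the right-exactness of \eqref{Hodge SES} and the identification above yields the asserted short exact sequence.
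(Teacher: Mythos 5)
Your skeleton --- reduce to showing the first arrow is a closed embedding via the Tannakian subquotient criterion, identify $\pi_1(\LS^{\text{Hdg}}(\Pic^1(C)))$ with $\mathbb{G}_a^{2g}\times\widehat{\Z}^{2g}$, then realize a generating set consisting of finite-monodromy objects plus one faithful unipotent object --- matches the paper's, and your route to the identification (semisimplicity of the connected monodromy group of a polarizable VHS, in place of the paper's non-abelian-Hodge/unitarity argument) is fine. But both realization steps have genuine gaps. The finite-monodromy step fails as written: $c\colon X\to X/\Jac[n]$ exhibits $X$ as a finite \'{e}tale cover \emph{of another space} (namely $X/\Jac[n]\cong\Pic^n_{\mc{C}_g/\mc{M}_g}$), not a cover of $X$, so $\mathcal{W}=c_*\Q$ is a local system on $X/\Jac[n]$ and is not an object of $\VMHS(X)$ at all. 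If you instead pull back, $c^*c_*\Q$ restricts \emph{trivially} to $A=\Pic^1(C)$: its monodromy is the action of $\pi_1(X)$ on the coset space $\pi_1(X/\Jac[n])/\pi_1(X)$, and $\pi_1(A)$ is contained in $\pi_1(X)$ and is normal in $\pi_1(X/\Jac[n])$ (conjugation on the abelian fiber group factors through the mapping class action, which preserves $H_1(C,\Z)\subset \tfrac{1}{n}H_1(C,\Z)$), so it acts trivially on the cosets. Hence no nontrivial finite-monodromy local system on $A$ is realized, and no injectivity of $\pi_1^{\text{\'{e}t}}(A)\to\pi_1^{\text{\'{e}t}}(X)$ follows. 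What you actually need are covers \emph{of} $X$ restricting to fiberwise multiplication-by-$n$ covers. The natural torsor-theoretic candidates (twist $\Pic^1\to\Pic^n$, $L\mapsto L^{\otimes n}$, back to $\Pic^1$ by a power of $\omega_{\mc{C}_g/\mc{M}_g}$) exist only when $n\equiv 1\pmod{2g-2}$, because the torsor class of $\Pic^1_{\mc{C}_g/\mc{M}_g}$ has order exactly $2g-2$ (this is the content of Hain's classification of sections of $\Pic^d\to\mc{M}_g$, cf.\ \cite{Hai11}, \cite{LLSS}); such $n$ are prime to $2g-2$ and hence not cofinal, so this route can never separate the $p$-adic part of $\widehat{\Z}^{2g}$ for $p\mid 2g-2$. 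The paper obtains the finite part by a different mechanism and in the opposite logical order: it first constructs the unipotent object $\mc{E}_0$, proves its monodromy representation $\rho$ restricts injectively to $\pi_1(\Pic^1(C))$, and only then manufactures the finite-monodromy local systems from finite quotients of the finitely generated linear group $\rho(\pi_1(X))$, which is residually finite by Malcev. The finite part is thus a consequence of the unipotent part, not an independent ingredient.

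Second, the unipotent step --- which you correctly call the crux --- is assumed rather than proved (``granting this''), and your plan aims at the wrong extension class. The Leray obstruction to lifting $\mathrm{id}\in\End(\mathbb{V})$ is governed by exactly the torsor nontriviality above, and the appeal to a Poincar\'{e} bundle or tautological $1$-motive requires a section (or a normalization along one) that $\Pic^1_{\mc{C}_g/\mc{M}_g}$ does not have; ``insensitive to the torsor twist'' is precisely what fails. The paper does not realize the tautological class: it realizes $(g-1)\cdot\mathrm{id}$, by forming the fiberwise-degree-zero line bundle $(\mathbb{L}^{\text{univ}})^{2g-2}\otimes\omega^{\vee}$ on $\mc{C}_g\times_{\mc{M}_g}\Pic^1_{\mc{C}_g/\mc{M}_g}$, feeding a divisor of it into the relative cycle class sequence to produce the extension $\mc{E}_0$, and computing via the Johnson homomorphism (Proposition \ref{Proposition: key monodromy computation}, following Hain--Matsumoto \cite{HM05}) that the fiberwise monodromy is $\gamma\mapsto(g-1)[\gamma]$. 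The factor $g-1$ is the visible trace of the obstruction your outline elides, and the whole theorem rests on the fact that this multiple is nonzero for $g\geq 2$. Without exhibiting some explicit nonzero multiple of the tautological class that extends to all of $\Pic^1_{\mc{C}_g/\mc{M}_g}$ --- which is the actual mathematical content here --- the argument does not close.
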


Part $(2)$ of Theorem \ref{Main theorem} is interesting as it provides some examples for which a ``Hodge theoretic question" has an affirmative answer (see also \cite{Xu24} for a slightly different formulation of the Hodge theoretic section question): Let $f\colon X\to B$ be a smooth projective family of algebraic varieties. Any algebraic section $s\colon B\to X$ induces a section of $\pi_1(\VMHS(X))\to \pi_1(\VMHS(B))$. Then, roughly speaking, the Hodge theoretic section question asks if group theoretic sections of $\pi_1(\VMHS(X))\to \pi_1(\VMHS(B))$ all come from and are determined by algebraic sections. In particular, if $\pi_1(\VMHS(X))\to \pi_1(\VMHS(B))$ has no section, then the Hodge theoretic section question always has an affirmative answer (in a trivial way). Now part $(2)$ of Theorem \ref{Main theorem} is proven by relating sections of Hodge theoretic fundamental groups to sections of \'{e}tale fundamental group. 
\begin{theorem}
	Let $f\colon X\to B$ be a smooth projective family of algebraic varieties. If the map $\pi_1(\VMHS(X))\to \pi_1(\VMHS(B))$ admits a group theoretic section, then so does the corresponding map of \'{e}tale fundamental groups $\pi_1^{\text{\'{e}t}}(X)\to \pi_1^{\text{\'{e}t}}(B)$.
\end{theorem}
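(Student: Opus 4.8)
The plan is to realise finite \'{e}tale covers Tannakianly inside $\VMHS$ and to transport the given section through the resulting restriction and forgetful functors. Write $\LS^{\mathrm{fin}}(Z)\subseteq \LS^{\mathrm{Hdg}}(Z)$ for the full subcategory of $\Q$-local systems with finite monodromy; it is a Tannakian subcategory closed under subquotients, and by the Riemann existence theorem it is identified with the category of continuous finite-dimensional $\Q$-representations of $\pi_1^{\text{\'{e}t}}(Z)$. The first step is the identification
\[
\pi_1(\LS^{\mathrm{fin}}(Z))\;\cong\;\underline{\pi_1^{\text{\'{e}t}}(Z)},
\]
the pro-constant group scheme over $\Q$ attached to the profinite group $\pi_1^{\text{\'{e}t}}(Z)$. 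This follows since $\LS^{\mathrm{fin}}(Z)=\varinjlim_{G_0}\mathrm{Rep}_{\Q}(G_0)$ over the finite quotients $G_0$ of $\pi_1^{\text{top}}(Z)$, and the Tannakian group of $\mathrm{Rep}_{\Q}(G_0)$ for a finite group $G_0$ is the constant group scheme $\underline{G_0}$. Because $\underline{(-)}$ is fully faithful from profinite groups into affine $\Q$-group schemes (a morphism $\underline{G_0}\to\underline{H_0}$ over the connected base $\Spec\Q$ is just a group homomorphism $G_0\to H_0$, and one passes to the limit), a section of $\pi_1^{\text{\'{e}t}}(X)\to\pi_1^{\text{\'{e}t}}(B)$ is \emph{the same datum} as a section of $\pi_1(\LS^{\mathrm{fin}}(X))\to\pi_1(\LS^{\mathrm{fin}}(B))$; so it suffices to produce the latter.

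Next I would translate the hypothesis into functors. A group-theoretic section $s\colon \pi_1(\VMHS(B))\to\pi_1(\VMHS(X))$ of the projection corresponds, by Tannakian duality, to a $\Q$-linear tensor retraction $\sigma\colon \VMHS(X)\to\VMHS(B)$ with $F_b\circ\sigma\cong F_x$ and $\sigma\circ f^{*}\cong\mathrm{id}_{\VMHS(B)}$, where $f^{*}\colon\VMHS(B)\to\VMHS(X)$ is pullback (note $F_x\circ f^{*}\cong F_b$ since $x\in X_b$, so all Tannakian groups are $\Q$-group schemes and $s$ is a section of the map of the theorem). On the other hand, any finite $\Z$-local system underlies a unique polarizable $\Q$-variation of Hodge structure, pure of weight $0$ and type $(0,0)$; this gives fully faithful inclusions $j_X\colon\LS^{\mathrm{fin}}(X)\hookrightarrow\VMHS(X)$ and $j_B\colon\LS^{\mathrm{fin}}(B)\hookrightarrow\VMHS(B)$, while the forgetful tensor functors $\omega_X\colon\VMHS(X)\to\LS^{\mathrm{Hdg}}(X)$ and $\omega_B\colon\VMHS(B)\to\LS^{\mathrm{Hdg}}(B)$ satisfy $\omega_X\circ j_X\cong\mathrm{id}$ and $\omega_B\circ j_B\cong\mathrm{id}$ onto the respective subcategories of finite local systems. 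All of these are compatible with pullback, giving the square $j_X\circ f^{*}\cong f^{*}\circ j_B$.

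I would then set $\tau\defeq\omega_B\circ\sigma\circ j_X$ and verify it is the desired section. First, $\tau$ lands in $\LS^{\mathrm{fin}}(B)$: a finite local system is a finite object of the Tannakian category $\VMHS(X)$, tensor functors preserve finite objects, so $\sigma(j_X V)$ is a finite object of $\VMHS(B)$ and hence has finite monodromy; thus $\omega_B\sigma j_X V\in\LS^{\mathrm{fin}}(B)$. Hence $\tau\colon\LS^{\mathrm{fin}}(X)\to\LS^{\mathrm{fin}}(B)$ is a tensor functor inducing $\tau^{*}\colon\pi_1(\LS^{\mathrm{fin}}(B))\to\pi_1(\LS^{\mathrm{fin}}(X))$. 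That $\tau^{*}$ is a section reduces to the natural isomorphism
\[
\tau\circ f^{*}=\omega_B\circ\sigma\circ j_X\circ f^{*}\cong\omega_B\circ\sigma\circ f^{*}\circ j_B\cong\omega_B\circ j_B\cong\mathrm{id}_{\LS^{\mathrm{fin}}(B)},
\]
using the compatibility square and the retraction property $\sigma\circ f^{*}\cong\mathrm{id}$. Since $F_b\circ\sigma\cong F_x$, the functor $\tau$ is compatible with the fiber functors at $x$ and $b$, so $\tau^{*}$ is an honest homomorphism of Tannakian groups; by Step 1 it corresponds to a continuous section of $\pi_1^{\text{\'{e}t}}(X)\to\pi_1^{\text{\'{e}t}}(B)$, completing the argument.

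The main obstacle is Step 1: pinning down that $\pi_1(\LS^{\mathrm{fin}}(Z))$ is exactly the pro-constant group scheme $\underline{\pi_1^{\text{\'{e}t}}(Z)}$ and that morphisms of such group schemes over $\Q$ recover continuous homomorphisms of profinite groups, so that a group-scheme section genuinely yields a section of \'{e}tale fundamental groups. The remaining points---that tensor functors preserve finite objects, that a finite object of $\VMHS$ has finite monodromy, and the compatibilities of $j$, $\omega$ and $f^{*}$---are formal consequences of the Tannakian formalism together with the fact that a finite local system carries a unique weight-$0$ Hodge structure, and I expect them to be routine.
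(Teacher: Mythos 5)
Your proposal is correct, and it takes a route that is genuinely different from the paper's. The paper proves the implication for \emph{Hodge theoretic} sections (Theorem \ref{Theorem: Hodge theoretic v.s. etale}): there the Hodge-theoreticity hypothesis is what allows the dual functor to be restricted to $\LS^{\text{Hdg}}(X)\to \LS^{\text{Hdg}}(B)$, and the key lemma is that any additive tensor functor between categories of local systems preserves finite monodromy. For an arbitrary group theoretic section --- which is what the statement at hand asserts --- the paper instead appeals to Andr\'{e}'s identification $\pi_1^{\text{\'{e}t}}(X)\cong \pi_0(\pi_1(\VMHS(X)))$ together with functoriality of $\pi_0$ (this is the remark following Corollary \ref{Corollary: Hodge theoretic section question for M_g}). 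You avoid both moves: by embedding $\LS^{\mathrm{fin}}$ into $\VMHS$ as weight-zero, type-$(0,0)$ objects and forming $\tau=\omega_B\circ\sigma\circ j_X$, you never need the section to respect the subgroups $\pi_1(\LS^{\text{Hdg}})$, and you never need Andr\'{e}'s $\pi_0$ result; the identification of $\pi_1(\LS^{\mathrm{fin}}(Z))$ with the pro-constant group scheme $\underline{\pi_1^{\text{\'{e}t}}(Z)}$, and full faithfulness of $\underline{(-)}$, is standard and is implicitly used by the paper as well. What each approach buys: yours is self-contained at the level of the categories in play and proves the general statement directly; the paper's $\pi_0$ argument is shorter but leans on a structural theorem of Andr\'{e}; the paper's functor-restriction argument is weaker in hypothesis-scope but is the one that interacts with the refined notion of Hodge theoretic splitting used in the rest of \S\ref{section: obstruction to sections}.

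One caveat: the step you label routine --- ``a finite object of $\VMHS$ has finite monodromy'' --- is not a formal consequence of the Tannakian formalism; it is exactly the content of the paper's key lemma, namely Nori's criterion that a local system $\mathbb{V}$ has finite monodromy if and only if $P(\mathbb{V})\cong Q(\mathbb{V})$ for some $P\neq Q\in\mathbb{N}[x]$, proven by Nori for curves and by Biswas--Holla--Schumacher for smooth quasi-projective varieties. This is why the paper's Theorem \ref{Theorem: Hodge theoretic v.s. etale} carries an explicit quasi-projectivity hypothesis, and your write-up inherits the same need: either cite these results (and assume $B$ quasi-projective, or reduce to that case), or prove the implication directly. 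A direct proof is available in characteristic zero and is elementary but not formal: the isomorphism $P(\mathbb{V})\cong Q(\mathbb{V})$ forces the character to take values in the finite set of roots of $P-Q$, a Burnside trace argument then shows the semisimplification has finite image, and a Krull--Schmidt cancellation argument on Jordan blocks rules out any nontrivial unipotent part. Two smaller points folded into your ``routine'' list also deserve a line each: a finite $\Q$-local system admits an integral structure (a finite subgroup of $\GL_n(\Q)$ stabilizes a lattice) and a polarization (average any positive definite form), which is what puts $j_X$ legitimately into $\VMHS(X)$ as defined in this paper.
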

This is a simple consequence of an observation of Andr\'{e}. We will give a different (but morally related) argument. It also allows us to revisit the formulation of the Hodge theoretic section question.

\textbf{Acknowledgement}. I would like to first thank my advisor Daniel Litt for many inspiring discussions and his constant support. This work owes a huge intellectual debt to many beautiful  prior work in Hodge theory. While it's impossible to list all of them, I would like to in particular highlight the work of Simpson on non-abelian Hodge locus \cite{Sim97}, the work of Esnault-Kerz on non-abelian analogue of the theorem of the fixed parts \cite{EK24}, recent works of Landesman, Litt and Lam  on finite orbits of mapping class groups, and the work of Hain-Matsumoto \cite{HM05}. Finally, I would also like to thank Daniel Litt, Charlie Wu, Moritz Kertz, H\'{e}l\`{e}ne Esnault, Marco D'Addezio, Emil Jacobsen and an anonymous reviewer for helpful comments and discussions on a preliminary draft of the paper.

\section{Exact sequence of Hodge theoretic fundamental groups}\label{section: SES}
In this section, our goal is to generalize the work of \cite[Theorem C. 11]{And21} to the relative setting:
\begin{theorem}\label{Thm: exact sequence of Hodge SES}
	Let $f\colon X\to B$ be a smooth projective family of algebraic varieties. The following natural sequence of Tannakian fundamental groups 
	$$\pi_1(\LS^{\text{Hdg}}(X_b))\to \pi_1(\VMHS(X))\to \pi_1(\VMHS(B))\to 1$$ is exact.
\end{theorem}

To prove this theorem, we need to be able to check properties of maps between affine group schemes via the pullback functors between the corresponding category of representations. To this end, we need these facts from the theory of Tannakian categories, which are known to experts and can all be found in \cite{DM82} and \cite[Appendix A]{DE22}. We include them to make this paper more self-contained.

Consider a sequence of affine group schemes over $\Q$: $$\mathcal{F}\to \mathcal{G}\to \mathcal{H}.$$ We would like to understand when the map $\mc{F}\to \mc{G}$ is a closed immersion, when the map $\mc{G}\to \mc{H}$ is faithfully flat, and when the sequence is exact in the middle. To this end, we recall the following proposition:
\begin{proposition}\label{Prop: Tannakian injectivity and surjectivity}
	Let $\phi\colon \mc{F}\to \mc{G}$ be a homomorphism of affine group schemes over $k$, and $\phi^*$ the induced pullback functor $\phi^*\colon \textbf{Rep}_k(\mc{G})\to \textbf{Rep}_k(\mc{F})$. Then 
	\begin{enumerate}
		\item $\phi$ is faithfully flat if and only if $\phi^*$ is observable (i.e. maps semi-simple objects to semi-simple objects) and fully faithful (see \cite[Proposition A. 6]{DE22});
		\item $\phi$ is a closed immersion if and only if every object of $\textbf{Rep}_k(\mc{F})$ is isomorphic to a subquotient of an object of the form $\phi^*(V)$ for some $V\in \textbf{Rep}_k(\mc{G})$ (see \cite[Proposition 2.21 b]{DM82}).
\end{enumerate}
\end{proposition}
The following proposition gives a criterion for exactness in the middle:
\begin{proposition}\label{Prop: Tannakian exactness}
	Let $$\mc{F}\xrightarrow{\phi} \mc{G}\xrightarrow{\psi}\mc{H}\to 1$$ be a sequence of affine group schemes over $\Q$, where $\psi$ is faithfully flat and $\psi\circ \phi$ is trivial. Then the sequence is exact in the middle if 
	\begin{enumerate}
		\item $\phi^*\colon \textbf{Rep}_k(\mc{G})\to \textbf{Rep}_k(\mc{F})$ is observable (i.e. it maps semi-simple objects to semi-simple objects). 
		\item for every $V\in \textbf{Rep}_k(\mc{G})$, there exists $W\in \textbf{Rep}_k(\mc{H})$ such that the maximal trivial subobject of $\phi^*V$ comes from $\psi^*W\subset V$.
	\end{enumerate}
\end{proposition}
\begin{proof}
	 This is essentially a restatement of \cite[Prop. A. 13]{DE22}. Since that proposition as stated in \cite{DE22} additionally assume that $\phi\colon \mc{F}\to \mc{G}$ is a closed immersion, we explain how to remove that assumption. By \cite[Prop. A. 12]{DE22}, we know that $\phi(\mc{F})$ is a closed normal subgroup of $\mc{G}$. Now applying \cite[Prop. A. 13]{DE22} to the sequence $$1\to \phi(\mc{F})\to \mc{G}\to \mc{H}\to 1$$ gives the desired result.
\end{proof}

This following Hodge theoretic lemma should also be well-known but since we could not find a reference, we include the proof for completeness:
\begin{lemma}\label{Lemma: push-pull}
	Let $f\colon X\to B$ be a smooth projective family of algebraic varieties (i.e. it's a smooth projective morphism between smooth connected complex algebraic varieties with connected fibers). Then for any $\mc{V}\in \VMHS(B)$, the natural map $f_*\circ f^*\mc{V}\to \mc{V}$ is an isomorphism. 
\end{lemma}
\begin{proof}
	Let $\mathbb{V}$ be the underlying local system of $\mc{V}$ and suppose $\rk\mathbb{V}=n$. Then locally $\mathbb{V}\otimes \mc{O}_B$ is isomorphic to $\mc{O}_B^{\oplus n}$. Since $f$ has connected fibre, we know that $f_*f^*\mc{O}_B=f_*\mc{O}_X\cong\mc{O}_B$, so the natural map $f_*f^*\mathbb{V}\otimes \mc{O}_B\to \mathbb{V}\otimes \mc{O}_B$ is an isomorphism of flat bundles and by Riemann-Hilbert, the corresponding map is an isomorphism of local systems. 
	
	Then a mixed version of Schmid's rigidity theorem \cite[Proposition 4.20]{SZ85} says that it's enough to show that for any $b\in B$, the map $f_*f^*\mc{V}\to \mc{V}$ induce isomorphism of mixed Hodge structures between $(f_*f^*\mc{V})_b\cong \mc{V}_b$. This is so since $(f_*f^*\mc{V})_b=H^0(X_b,f^*\mc{V})$ and the desired result follows from the theorem of the fixed parts \cite[Proposition 4.19]{SZ85} and the fact that $f^*\mc{V}$ has trivial monodromy over $X_b$.
\end{proof}
\begin{remark}\hfill
\begin{enumerate}
	\item We did not \textit{explicitly} use the fact that $f$ is smooth and projective. Nevertheless, this assumption is needed to ensure that given a $\mc{W}\in \VMHS(X)$, $f_*\mc{W}\in \VMHS(B)$.
	\item The results in \cite{SZ85} are stated in the case where $B$ is of dimension $1$. However, both statements generalize to higher dimensional settings by reducing to the case of curves (see e.g. \cite[4.3.2.0]{Kat72} and \cite[page 88]{HZ87}).
\end{enumerate}
\end{remark}

Now we are ready to prove the Theorem \ref{Thm: exact sequence of Hodge SES}.
\begin{proof}[Proof of Theorem \ref{Thm: exact sequence of Hodge SES}]
	We first show that the map $\pi_1(\VMHS(X))\to \pi_1(\VMHS(B))$ is faithfully flat. By Proposition \ref{Prop: Tannakian injectivity and surjectivity}, we see that it's enough to show that $f^*$ is observable (i.e. sends semi-simple objects to semi-simple objects) and fully faithful. The fact that $f^*$ is observable follows from the fact that the semi-simple objects in $\VMHS_\Q(X)$ are exactly the direct sums of pure $\Q$-variations of Hodge structure, and the pullback of a pure VHS is still pure. Furthermore, by Lemma \ref{Lemma: push-pull}, we know that $\id$ is isomorphic to $f_*\circ f^*$ and so $f^*$ is full. Finally, given any non-zero map between variations of mixed Hodge structure on $B$, the pull-back of this map will still remain non-zero and so $f^*$ is faithful. This shows that $\pi(f)$ is fully faithful. 

Now we would like to apply Proposition \ref{Prop: Tannakian exactness}. First, $\iota^*\circ f^*$ is certainly trivial, as the objects in the image are local systems underlying a variation of mixed Hodge structure on $X_b$ that are pulled back from a point. Furthermore, by Deligne's semisimplicity theorem \cite[Theorem 4.2.6]{Del71}, the local system underlying any pure $\Q$-variation of Hodge structure is also semisimple, and hence $\iota^*$ is observable. Finally, we claim that the maximal trivial subobject $\mc{U}$ of $\iota^*\mc{V}$ is isomorphic to $\iota^*f^*f_*\mc{V}$. Note that $\iota^*f^*f_*\mc{V}$ is certainly a trivial subobject of $\iota^*\mc{V}$ so by maximality, we have a map $\iota^*f^*f_*\mc{V}\to \mc{U}$. It's an isomorphism because the induced maps on stalks are the isomorphism $(\iota^*f^*f_*\mc{V})_b=(f_*\mc{V})_b\cong H^0(X_b,\iota^*\mc{V})=\mc{U}_b$. Therefore, by Proposition \ref{Prop: Tannakian exactness}, the sequence is exact in the middle.
\end{proof}
\begin{corollary}
	Let $f\colon X\to B$ be a smooth projective family of algebraic varieties as before. The following sequence of Tannakian fundamental groups $$\pi_1(\LS^{\text{Hdg}}(X_b))\to \pi_1(\LS^{\text{Hdg}}(X))\to \pi_1(\LS^{\text{Hdg}}(B))\to 1$$ is exact.
\end{corollary}
\begin{proof}
	Consider the following commutative diagram % https://q.uiver.app/#q=WzAsMTQsWzAsMSwiXFxwaV8xKFxcTFNee1xcdGV4dHtIZGd9fShYX2IpKSJdLFsxLDEsIlxccGlfMShcXExTXntcXHRleHR7SGRnfX0oWCkpIl0sWzAsMiwiXFxwaV8xKFxcTFNee1xcdGV4dHtIZGd9fShYX2IpKSJdLFsyLDEsIlxccGlfMShcXExTXntcXHRleHR7SGRnfX0oQikpIl0sWzMsMSwiMSJdLFszLDIsIjEiXSxbMiwzLCJcXHBpXzEoXFxNSFMpIl0sWzEsMywiXFxwaV8xKFxcTUhTKSJdLFsxLDIsIlxccGlfMShcXFZNSFMoWCkpIl0sWzIsMiwiXFxwaV8xKFxcVk1IUyhCKSkiXSxbMSw0LCIxIl0sWzIsNCwiMSJdLFsxLDAsIjEiXSxbMiwwLCIxIl0sWzAsMV0sWzIsOF0sWzAsMiwiPSJdLFs3LDYsIj0iXSxbOCw3XSxbOSw2XSxbOCw5XSxbMSw4XSxbMyw5XSxbMSwzXSxbMyw0XSxbOSw1XSxbMTIsMV0sWzcsMTBdLFs2LDExXSxbMTMsM11d
\[\begin{tikzcd}
	& 1 & 1 \\
	{\pi_1(\LS^{\text{Hdg}}(X_b))} & {\pi_1(\LS^{\text{Hdg}}(X))} & {\pi_1(\LS^{\text{Hdg}}(B))} & 1 \\
	{\pi_1(\LS^{\text{Hdg}}(X_b))} & {\pi_1(\VMHS(X))} & {\pi_1(\VMHS(B))} & 1 \\
	& {\pi_1(\MHS)} & {\pi_1(\MHS)} \\
	& 1 & 1
	\arrow[from=1-2, to=2-2]
	\arrow[from=1-3, to=2-3]
	\arrow[from=2-1, to=2-2]
	\arrow["{=}", from=2-1, to=3-1]
	\arrow[from=2-2, to=2-3]
	\arrow[from=2-2, to=3-2]
	\arrow[from=2-3, to=2-4]
	\arrow[from=2-3, to=3-3]
	\arrow[from=3-1, to=3-2]
	\arrow[from=3-2, to=3-3]
	\arrow[from=3-2, to=4-2]
	\arrow[from=3-3, to=3-4]
	\arrow[from=3-3, to=4-3]
	\arrow["{=}", from=4-2, to=4-3]
	\arrow[from=4-2, to=5-2]
	\arrow[from=4-3, to=5-3]
\end{tikzcd}\]The desired result follows as the second row, the second and the third column are all exact.\end{proof}

\begin{remark}\label{Remark: pure Hodge SES}
Given an affine group scheme $\mc{G}$ over $\Q$, recall that its maximal pro-reductive quotient may be viewed as the Tannakian fundamental group associated to the sub-Tannakian category of semi-simple objects in $\text{Rep}(\mc{G})$. In the case of $\VMHS(X)$ (resp. of $\LS^{\text{Hdg}}(X)$, this sub-category is the one generated by polarizable pure $\Q$-variations of Hodge structure over $X$ (resp. local systems underlying such variations). If we write $$\pi_1(\LS^{\text{Hdg,ss}}(X)):=\pi_1(\LS^{\text{Hdg}}(X))^{\text{red}},\quad\quad \pi_1(\VHS(X)):=\pi_1(\VMHS(X))^{\text{red}},$$ we see that by Deligne's semi-simplicity theorem, we get a natural sequence of maps $$\pi_1(\LS^{\text{Hdg,ss}}(X_b))\xrightarrow{\iota_b^{s.s}} \pi_1(\VHS(X))\to \pi_1(\VHS(B))\to 1.$$ The same argument as above shows that given a smooth projective family of algebraic varieties $f:X\to B$, this sequence is exact. Note here that the usage of Deligne's semi-simplicity theorem is necessary, as maps between Tannakian fundamental groups do not in general induce maps between their maximal pro-reductive quotient. 
\end{remark}
%It's enough to check this at the level of local systems. Let $\mc{U}$ be the maximal trivial part of $\iota^*\mc{V}$; it's a constant sheaf with values in $H^0(X_b,\iota^*\mc{V})$. Since $\iota^*f^*f_*\mc{V}$ is also trivial, there's certainly a map . 

\section{Failure of injectivity}\label{section: Injectivity and non-injectivity}
The goal of this section is to understand that given a smooth projective family $f\colon X\to B$, when the map $$\iota_b\colon \pi_1(\LS^\text{Hdg}(X_b))\to \pi_1(\VMHS(X))$$ is a closed immersion. Recall that Proposition \ref{Prop: Tannakian injectivity and surjectivity} part (2) says that $\iota_b$ is a closed immersion if every local system in $\LS^\text{Hdg}(X_b)$ is a subquotient of a local system which extends to one that underlies a graded-polarizable, admissible $\Q$-variations of mixed Hodge structure on all of $X$. Phrased slightly differently, this proposition says that the image of $\iota_b$ is the Tannakian fundamental group associated to the category $\LS^{\text{ext}}(X_b)$ of $\Q$-local systems on $X_b$ with an underlying integral structure that are subquotients of local systems on $X_b$ which extend to local systems on all of $X$ underlying some variation of mixed Hodge structure.\footnote{This observation is due to Moritz Kerz in a private email and we would like to thank him for pointing this out.} 

Now suppose $\mathbb{V}$ is an \textit{irreducible} local system in the category $\LS^{\text{ext}}(X_b)$, and let $\mathbb{W}$ be a local system on $X$ such that $\mathbb{W}$ underlies some variation of mixed Hodge structure and that $\mathbb{V}$ is a subquotient of $\mathbb{W}|_{X_b}$. Let $b_1$ be another point of $B$ and pick a path $\gamma\in B$ from $b$ to $b_1$. Let $x$ be a point in the fiber $X_b$ and $x_1$ a point in the fiber of $X_{b_1}$ and again pick some $\gamma'$ a lift of $\gamma$ to $X$ going from $x$ to $x_1$. These choices then fix isomorphisms $\eta_\gamma\colon \pi_1(B,b_1)\to \pi_1(B,b)$ and $\eta_{\gamma'}\colon \pi_1(X,x_1)\to \pi_1(X,x)$, and they fit into the following commutative diagram:% https://q.uiver.app/#q=WzAsMTAsWzAsMCwiMSJdLFsxLDAsIlxccGlfMShYX2IseCkiXSxbMCwxLCIxIl0sWzEsMSwiXFxwaV8xKFhfe2JfMX0seF8xKSJdLFsyLDAsIlxccGlfMShYLHgpIl0sWzIsMSwiXFxwaV8xKFgseF8xKSJdLFszLDAsIlxccGlfMShCLGIpIl0sWzMsMSwiXFxwaV8xKEIsYl8xKSJdLFs0LDAsIjEiXSxbNCwxLCIxIl0sWzAsMV0sWzIsM10sWzEsMywiXFxldGEiXSxbMyw1XSxbMSw0XSxbNCw1LCJcXGV0YV97XFxnYW1tYSd9Il0sWzQsNl0sWzUsN10sWzYsNywiXFxldGFfXFxnYW1tYSJdLFs2LDhdLFs3LDldXQ==
\[\begin{tikzcd}
	1 & {\pi_1(X_{b_1},x_1)} & {\pi_1(X,x_1)} & {\pi_1(B,b_1)} & 1 \\
	1 & {\pi_1(X_{b},x)} & {\pi_1(X,x)} & {\pi_1(B,b)} & 1
	\arrow[from=1-1, to=1-2]
	\arrow[from=1-2, to=1-3]
	\arrow["\eta_{\gamma'}", from=1-2, to=2-2]
	\arrow[from=1-3, to=1-4]
	\arrow["{\eta_{\gamma'}}", from=1-3, to=2-3]
	\arrow[from=1-4, to=1-5]
	\arrow["{\eta_\gamma}", from=1-4, to=2-4]
	\arrow[from=2-1, to=2-2]
	\arrow[from=2-2, to=2-3]
	\arrow[from=2-3, to=2-4]
	\arrow[from=2-4, to=2-5]
\end{tikzcd}\] where we will abuse the notation and denote $\eta_{\gamma'}$ the restriction of $\eta_{\gamma'}$ to $\pi_1(X_{b_1},x_1)$ as well. For any local system $\mathbb{V}$ on $X_b$ defined via a representation $\rho\colon \pi_1(X_b)\to \GL(V)$, let $\eta_{\gamma'}^*\mathbb{V}$ be the local system on $X_{b_1}$ defined by the representation $$\rho\circ \eta_{\gamma'}\colon \pi_1(X_{b_1},x_1)\to \pi_1(X_b,x)\to \GL(V).$$ 
\begin{lemma}\label{Lemma: extending local systems}
	For any choices of $b_1,\gamma,\gamma'$, we have \hfill
	\begin{enumerate}
	\item the local system $\eta_{\gamma'}^*\mathbb{V}$ underlies a polarizable complex variation of Hodge structure;
	\item the local system $\eta_{\gamma'}^*\mathbb{V}\oplus \eta_{\gamma'}^*\mathbb{V}$ underlies a polarizable $\Q$-variation of Hodge structure with an underlying integral structure, and hence $\eta_{\gamma'}^*\mathbb{V}\in \LS^\text{Hdg}(X_{b_1})$.
	\end{enumerate}
\end{lemma}
\begin{proof}
	Note that $\mathbb{W}|_{X_{b_1}}=\eta_{\gamma'}^*\mathbb{W}|_{X_b}$ so since $\mathbb{V}$ is a subquotient of $\mathbb{W}|_{X_b}$, $\eta_{\gamma'}^*\mathbb{V}$ is a subquotient of $\mathbb{W}|_{X_{b_1}}$. Since $\mathbb{W}$ underlies a variation of mixed Hodge structure, so does $\mathbb{W}|_{X_{b_1}}$. Since $\eta_{\gamma'}^*\mathbb{V}$ is irreducible, it's a sub-local system of $\Gr^W_\bullet (\mathbb{W}|_{X_{b_1}})$ which underlies a pure polarizable $\Q$ and hence $\C$-variation of Hodge structure. Call this variation $\mc{W}$. By \cite[Prop. 1.13]{Del87}, $\eta^*\mathbb{V}$ underlies a polarizable $\C$-VHS $\mc{V}$, which is a sub-variation of $\mc{W}$ viewed as a polarizable $\C$-VHS. This proves the first statement.
	
	To prove the second statement, we consider the $\C$-VHS structure on the flat bundle $(\eta_{\gamma'}^*\mathbb{V}\otimes \mc{O})\oplus (\eta_{\gamma'}^*\mathbb{V}\otimes \mc{O})$, where the Hodge filtration on the first copy of $\eta_{\gamma'}^*\mathbb{V}\otimes \mc{O}$ agrees with the Hodge filtration on $\mc{V}$ and the Hodge filtration on the second copy of $\eta_{\gamma'}^*\mathbb{V}\otimes \mc{O}$ is the conjugate filtration of the one on $\mc{V}$. By construction, we see that the Hodge filtration on $(\eta_{\gamma'}^*\mathbb{V}\otimes \mc{O})\oplus (\eta_{\gamma'}^*\mathbb{V}\otimes \mc{O})$ satisfies $H^{p,q}=\overline{H^{q,p}}$ on each fiber and hence $\eta_{\gamma'}^*\mathbb{V}\oplus \eta_{\gamma'}^*\mathbb{V}$ underlies a polarizable $\Q$-variation of Hodge structure.
\end{proof}
We will use this Lemma to show that $\iota_b$ does not have to be injective in general. 
\begin{theorem}\label{Theorem: non-injectivity following Lam}
	Fix some integer $g\geq 2$. Then there exists a curve $C$ of genus $g$ such that the map $$\iota_{[C]}\colon \pi_1(\LS^{\text{Hdg}}(C))\to \pi_1(\VMHS(\mc{C}_g))$$ is not injective.
\end{theorem}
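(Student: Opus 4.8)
The plan is to run the Tannakian criterion of Proposition \ref{Prop: closed embedding of Tannaka groups} in reverse. By that proposition, the map $\iota_{[C]}$ is a closed embedding (which is what the paper calls injectivity here) if and only if every object of $\LS^{\text{Hdg}}(C)$ is a subquotient of the restriction $\mathbb{V}|_C$ of some local system $\mathbb{V}$ on $\mc{C}_g$ underlying a graded-polarizable admissible $\Q$-variation of mixed Hodge structure. So it suffices to exhibit, for one well-chosen curve $C$ of genus $g$, a single local system $L \in \LS^{\text{Hdg}}(C)$ that is not such a subquotient.

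The obstruction I would use is the mapping class group action. Writing the homotopy (Birman) exact sequence of $f\colon \mc{C}_g \to \mc{M}_g$ as $1 \to \pi_1(C) \to \pi_1(\mc{C}_g) \to \pi_1(\mc{M}_g) \to 1$ and identifying $\pi_1(\mc{M}_g)$ with the orbifold mapping class group $\Mod_g$, conjugation by lifts of elements of $\Mod_g$ gives a well-defined $\Mod_g$-action on the set of isomorphism classes of local systems on $C$ (changing the lift alters the representation by an inner automorphism, hence not its isomorphism class). First I would record that for any local system $\mathbb{V}$ on $\mc{C}_g$ the restriction $\mathbb{V}|_C$ is $\Mod_g$-invariant. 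Since polarizable variations of Hodge structure have semisimple underlying local systems, and a fixed semisimple local system admits only finitely many isomorphism classes of sub-local systems, it follows that any semisimple $L$ in the image of $\iota_{[C]}$ — being a summand of the semisimplification of some $\Mod_g$-invariant $\mathbb{V}|_C$ — has \emph{finite} $\Mod_g$-orbit. Thus the task reduces to producing a polarizable $\Z$-variation of Hodge structure on $C$ whose underlying local system has \emph{infinite} $\Mod_g$-orbit.

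Constructing such a variation, following Lam, is the heart of the matter. My proposed approach is to take $C$ to be a special curve admitting a nonconstant map to a compact (quaternionic) Shimura curve $S$, and to let $L$ be the pullback to $C$ of the tautological polarizable $\Z$-variation on $S$; because $\pi_1(S)$ is a cocompact arithmetic lattice, the monodromy of $L$ has infinite image. To upgrade ``infinite image'' to ``infinite orbit,'' I would invoke the finite-orbit rigidity theorems of Landesman–Litt and Lam: a semisimple local system of sufficiently small rank relative to $g$ whose $\Mod_g$-orbit is finite must have finite image, so the infinite-image $L$ cannot have finite orbit. By the previous paragraph such an $L$ is not a subquotient of any $\mathbb{V}|_C$, the subquotient criterion fails, and $\iota_{[C]}$ is not injective.

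The main obstacle is controlling three constraints on $L$ simultaneously: it must underlie a genuine polarizable variation of Hodge structure with integral structure on the \emph{projective} curve $C$, which by Arakelov-type rigidity sharply limits the available variations over a projective base; it must have infinite monodromy; and its rank must be small enough relative to $g$ for the finite-orbit rigidity input to apply. This tension is most severe for small $g$, where the admissible rank is smallest yet projectivity already forbids the lowest-rank (e.g.\ rank-two, elliptic) variations, so the lowest integral rank available from a compact Shimura curve is four. Handling every $g \geq 2$ uniformly, and certifying that the chosen $L$ genuinely escapes the finite-orbit locus, is where the work following Lam must be concentrated.
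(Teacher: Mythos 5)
Your framework is sound and runs parallel to the paper's: the Tannakian criterion (Proposition \ref{Prop: closed embedding of Tannaka groups}), the observation that a semisimple local system in the essential image of $\iota_{[C]}$ must have finite orbit under $\Mod_g = \pi_1(\mc{M}_g)$, and the choice of a curve mapping onto a compact Shimura curve all match the paper in spirit (its Lemma \ref{Lemma: from mixed to Lam} plays the role of your finite-orbit reduction). The genuine gap is in the step you yourself flag as ``the heart of the matter'': upgrading \emph{infinite image} to \emph{infinite $\Mod_g$-orbit}. Neither of the results you cite supplies this for all $g \geq 2$. The Landesman--Litt rigidity theorem requires the rank to be small relative to the genus (roughly $r < \sqrt{g+1}$), so it says nothing about your rank-four pullback local system unless $g$ is large (on the order of $g \geq 16$), and nothing even about its rank-two complex factors when $g = 2, 3$; the paper makes exactly this point in the remark following Theorem \ref{Theorem: non-injectivity following Lam}, where \cite{LL24} is said to yield the result only ``when $g$ is large.'' As for Lam, Theorem 1.1 of \cite{Lam22} is not a finite-orbit rigidity statement at all: it asserts finiteness of the set of genus-$2$ curves carrying rank-$2$ motivic local systems, and its proof (uniformizing Higgs bundles plus Takeuchi's finiteness of arithmetic Fuchsian groups of bounded genus \cite{Tak83}) is precisely the argument your proposal leaves as a black box. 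So for small $g$ --- exactly the cases needed to cover every $g \geq 2$ --- your proposal has no valid input, and your final paragraph acknowledges rather than closes this hole.

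For comparison, the paper fills this gap by working with the uniformizing rank-two local system itself, whose Higgs bundle $\mc{L} \oplus \mc{L}^\vee$ has maximal degree $\deg \mc{L} = g-1$: Lemma \ref{Lemma: from mixed to Lam} shows that an irreducible local system which is a subquotient of the restriction of an object of $\VMHS(\mc{C}_g)$ must, via isomonodromic deformation, underlie a pure $\Q$-variation of Hodge structure on \emph{every} fiber, and Takeuchi's finiteness theorem then bounds the set of curves on which an arithmetic uniformizing representation can do so --- a contradiction. Pulling back along a degree $g-1$ cover of a genus-$2$ Shimura curve keeps the local system uniformizing for degree reasons, which is what makes the paper's argument uniform in $g \geq 2$. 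If you prefer to keep your finite-orbit formulation, it can in fact be completed for all $g \geq 2$, but with an input different from the ones you cite: the classification of finite-orbit $\SL_2(\C)$-representations of closed surface groups due to Biswas--Gupta--Mj--Whang shows that finite orbit implies finite image in rank two for every $g \geq 2$, and this applies to the rank-two complex factors of your rank-four pullback, whose orbits are finite (up to swapping the two factors) whenever the orbit of $L$ is.
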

To prove this, we will find a curve $C$ of genus $g$ equipped with a $\SL_2$-local system $\mathbb{V}\in \LS^{\text{Hdg}}(C)$ satisfying some extra conditions which ensure that there are only finitely many curves of a given genus carrying such a local system. First recall that by the uniformization theorem, any curve $C$ of genus at least $2$ is uniformized by the upper-half plane $\mc{H}$ and choosing a uniformization gives a representation $$\rho\colon \pi_1(C,x)\to \PSL_2(\R),$$ where $\PSL_2(\R)$ is viewed as the group of orientation preserving isometries of $\mc{H}$. Such a representation will be called a uniformization representation. 
\begin{definition}[Arithmetic uniformizing local system]
Let $C$ be a smooth projective curve of genus at least $2$ and $\mathbb{V}$ an $\SL_2$-local system on $C$. We say that $\mathbb{V}$ is an \textit{arithmetic uniformizing} local system if it satisfies the following conditions:
\begin{enumerate}
	\item $\mathbb{V}$ is uniformizing in the sense that the associated monodromy representation $$\rho_{\PGL_2}\colon\pi_1(C,x)\to \SL_2(\R)\to \PSL_2(\R)$$ is conjugate to a uniformization representation $\pi_1(C,x)\to \PSL_2(\R)$ obtained by choosing uniformization of $C$.
	\item the image of $\rho_{\SL_2}\colon \pi_1(C,x)\to \SL_2(\R)$ is an arithmetic Fuchsian group (for a precise definition of arithmetic Fuchsian group, see \cite[Definition 1]{Tak83}).
\end{enumerate}
\end{definition}
We will show that for any integer $g\geq 2$, there exists a curve $C$ carrying an irreducible arithmetic uniformizing local system $\mathbb{V}$ such that $\mathbb{V}$ is not in $\LS^{\text{ext}}(C)$ and so the map $\iota_{[C]}\colon \pi_1(\LS^{\text{Hdg}}(C))\to \pi_1(\VMHS(\mc{C}_g))$ is not injective. To that end, we need the following finiteness lemma which is essentially due to Takeuchi \cite{Tak83}.
\begin{lemma}\label{Lemma: finiteness of arithmetic uniformizing theorem}
		Fix some $g\geq 2$. There are only finitely many of isomorphism classes of curves of genus $g$ carrying an arithmetic uniformizing local system.
\end{lemma}
\begin{proof}
	Since $\mathbb{V}$ is uniformizing, we see that the image determines the isomorphism class of the curve as $C\cong \mc{H}/\rho_{\PGL_2}(\pi_1(C,x))$. Therefore, it's enough to show that the set of conjugacy classes of possible images of arithmetic uniformizing local systems is finite. This follows from \cite[Theorem 2.1]{Tak83} (note that since $C$ is a smooth compact Riemann surface, the image of $\rho_{\PGL_2}$ has no elliptic or parabolic element).
\end{proof}
\begin{proposition}[Lam]\label{Proposition: Lam's theorem rephrased}
	Suppose $C$ is a curve of genus $g=2$ carrying an irreducible arithmetic uniformizing local system $\mathbb{V}$. Then $\mathbb{V}$ is not contained in $\LS^{\text{ext}}(C)$.
\end{proposition}
\begin{proof}
Assume for the sake of contradiction that $\mathbb{V}\in \LS^{\text{ext}}(C)$. Since it's irreducible, we know by Lemma \ref{Lemma: extending local systems} that for any other curve $C'$ and any path $\gamma$ connecting $C$ and $C'$ in $\mc{C}_g$, $\eta_{\gamma}^*\mathbb{V}$ underlies a polarizable complex variation of Hodge structure on $C'$. Let $(\mc{E},\nabla)$ be the flat bundle associated to $\eta_{\gamma}^*\mathbb{V}$. It has a Hodge filtration $F^\bullet$, and let $\mc{L}:=F^1\mc{E}$. Consider the short exact sequence $$0\to \mc{L}\to \mc{E}\to \mc{E}/\mc{L}\to 0.$$ Taking the determinant, we see that $\mc{E}/\mc{L}$ has to be $\mc{L}^{-1}$. As $\eta_{\gamma}^*\mathbb{V}$ underlies a polarizable $\C$-variation of Hodge structure, by \cite[Proposition 2.1]{Lam22}, we know that $\deg\mc{E}/\mc{L}=\deg\mc{L}^{-1}<0$ and so $\deg\mc{L}>0$.

Consider the Higgs map associated to this flat bundle $$\mc{L}\to \mc{E}/\mc{L}\otimes \Omega_{C}^1=\mc{L}^{-1}\otimes \Omega_C^1.$$ This map is non-zero as the local system is irreducible, and hence we have the inequality $$0< \deg\mc{L}\leq \deg\mc{L}^{-1}\otimes \Omega_C^1=2-\deg\mc{L}$$ from which we see that $\deg \mc{L}=\deg \mc{L}^{-1}\otimes \Omega_C^1=1$. It follows that the Higgs map is an isomorphism and so $\mc{L}$ is a square root of $\Omega_C^1$. In particular, the associated Higgs bundle, and hence the local system is uniformizing by the work of Hitchin \cite[Example 1.5]{Hit87}. 

On the other hand, the image of the monodromy representation $\rho_{\SL_2}$ associated to $\eta^*_{\gamma}$ agrees with the one associated to $\mathbb{V}$ and so is still an arithmetic Fuchsian group. In particular, $\eta_{\gamma}^*\mathbb{V}$ is an arithmetic uniformizaing local system. As $\mc{M}_g$ is connected, we see that every curve then must carry an arithmetic uniformizing local system, contradicting Lemma \ref{Lemma: finiteness of arithmetic uniformizing theorem}.
\end{proof}
\begin{corollary}
	There exists a curve $C$ of genus $2$ such that the map $$\iota_{[C]}\colon \pi_1(\LS^{\text{Hdg}}(C))\to \pi_1(\VMHS(\mc{C}_2))$$ is not injective.
\end{corollary}
\begin{proof}
	It's enough to find a genus $2$ curve carrying an irreducible arithmetic uniformizing local system. It's known that the canonical $\SL_2$-local system attached to a genus $2$ Shimura curve satisfies these conditions.
\end{proof}
\begin{remark}
	We've attributed Proposition \ref{Proposition: Lam's theorem rephrased} to Lam because although the statement is different, the proof is essentially the same as the proof of \cite[Theorem 1.1]{Lam22}.
\end{remark}

We would like to adapt the same argument and make it work in the case where $g>2$. We set up some notations. Let $\mc{H}_d$ be the moduli space of degree $d$ covers of a genus $2$ curve. It admits a map onto $\mc{M}_2$. Let $S$ and $S'$ be two curves of genus $2$ and pick a path $\gamma$ from $[S]$ to $[S']$ inside $\mc{M}_2$. We've seen that by choosing such a path and a lift of this map to $\mc{C}_2$, we may fix an isomorphism $$\eta_\gamma\colon \pi_1(S',x')\to \pi_1(S,x).$$ Now for any subgroup $H$ of $\pi_1(S,x)$ of index $d$, we see that $H':=\eta_\gamma^{-1}(H)$ is a finite index subgroup of $\pi_1(S',x')$ of index $d$ as well so it corresponds a unique finite \'{e}tale cover $C'$ of $S'$. Now suppose we have a local system on $\pi_1(S,x)$ with associated monodromy representation $\rho\colon \pi_1(S,x)\to \GL(V)$, we define $$\begin{aligned}
	\eta^*_\gamma \mathbb{V}&:=\text{ the local system on }S' \text{ associated to }\pi_1(S',x')\to \pi_1(S,x)\to \GL(V)\\ 
	f^*\mathbb{V} &:=\text{ the local system on }S' \text{ associated to }H\to \pi_1(S,x)\to \GL(V)\\
	\eta^*_\gamma f^*\mathbb{V}&:=\text{ the local system on }S' \text{ associated to }H'\to H\to \pi_1(S,x)\to \GL(V)
\end{aligned}$$
By construction, it's clear that $f^*$ commutes with $\eta_{\gamma}^*$ so $$\eta_{\gamma}^*f^*\mathbb{V}=f^*\eta_{\gamma}^*\mathbb{V}.$$ Furthermore, set $$\begin{aligned}
	\mc{E}_{S}&:=\text{the flat bundle on }S\text{ associated to }\mathbb{V}\\
	\mc{E}_{S'}&:=\text{the flat bundle on }S'\text{ associated to }\eta^*_\gamma\mathbb{V}\\
	\mc{E}_{C}&:=\text{the flat bundle on }  C\text{ associated to }f^*\mathbb{V}\\
	\mc{E}_{C'}&:=\text{the flat bundle on }C'\text{ associated to }\eta^*_\gamma f^*\mathbb{V}\\
\end{aligned}$$

\begin{proof}[Proof of Theorem \ref{Theorem: non-injectivity following Lam}]
	Let $S$ be a genus $2$ Shimura curve and $\mathbb{V}$ the canonical $\SL_2$-local system on $S$. Since it's of genus $2$, we know that for any $g\geq 3$, there exists a genus $g$ finite \'{e}tale cover $C$ of $S$. Fix such a cover and the $H$ will be the finite index subgroup of $\pi_1(S,x)$ corresponding to this cover $C$. Assume for the sake of contradiction that $f^*\mathbb{V}\in \LS^{\text{ext}}(C)$. We will show that $\eta_{\gamma}^*f^*\mathbb{V}$ is always an arithmetic uniformizing local system for any choice of $S'$ and path $\gamma$. This then contradicts Lemma \ref{Lemma: finiteness of arithmetic uniformizing theorem}, as the forgetful map from $\mc{H}_d\to \mc{M}_{d+1}$ is of infinite image.
	
	The image of the associated monodromy representation is certainly still an arithmetic Fuchsian group, as it's of finite index inside the image of the monodromy representation associated to $\mathbb{V}$. Therefore, it's enough to show that it's uniformizing. We claim that the same argument as the one in the proof of Proposition \ref{Proposition: Lam's theorem rephrased} will work: notice that the only place where we need the genus $2$ assumption is that we need it to do the degree calculation to show that the Higgs map is an isomorphism. However, in our setup, we can do the degree calculation using the fact that the Hodge filtration on $\mc{E}_{C'}$ is pulled back from the Hodge filtration on $\mc{E}_{S'}$. In particular, $$\deg F^1\mc{E}_{C'}=d \cdot \deg F^1\mc{E}_{S'}.$$ Furthermore, since the map is \'{e}tale, we know that $f^*\Omega_{S'}=\Omega_{C'}$ and so similarly, $$\deg \left((\mc{E}_{C'}/F^1\mc{E}_{C'})\otimes \Omega_{C'}^1\right)=d\cdot \deg \left((\mc{E}_{S'}/F^1\mc{E}_{S'})\otimes \Omega_{S'}^1\right).$$ It follows that $\deg F^1\mc{E}_{C'}=\deg \left((\mc{E}_{C'}/F^1\mc{E}_{C'})\otimes \Omega_{C'}^1\right)$ and the Higgs map  is again an isomorphism as desired.
\end{proof}
\begin{remark}
	One may also use \cite[Theorem 1.2.5]{LL24} to prove Theorem \ref{Theorem: non-injectivity following Lam} (at least when $g$ is large). We've decided to adapt Lam's argument since it gives us a way of concretely writing down a local system not contained in $\LS^{\text{ext}}(C)$.
\end{remark}

\section{Non-abelian analogue of the Hodge exceptional locus}\label{section: non-abelian Hodge locus}
In this section, we study the set where the map $\iota_b\colon \pi_1(\LS^\text{Hdg}(X_b))\to \pi_1(\VMHS(X))$ is not injective. We introduce the following definition:

\begin{definition}\label{Definition: NG locus}
	Let $f\colon X\to B$ be given as above. Define $$\begin{aligned}NG(X/B)&:=\{b\in B: \iota_b\colon \pi_1(\LS^\text{Hdg}(X_b))\to \pi_1(\VMHS(X))\text{ is not injective}\}\end{aligned}$$
\end{definition}

In view of Proposition \ref{Prop: Tannakian injectivity and surjectivity} part (2), we see that the points $b\in NG(X/B)$ can be thought of as the set of points over which the fibers $X_b$ have ``extra" local systems that underlie $\Q$-variations of Hodge structure. This can be viewed as a non-abelian analogue of the Hodge exceptional locus studied in Cattani-Deligne-Kaplan \cite{CDK95}, which is the locus of points over which the fibers $X_b$ have ``extra" Hodge classes. Because Hodge filtrations vary holomorphically in a polarizable $\Q$-variation of Hodge structure, it's not hard to see that the Hodge exceptional locus is a countable union of closed analytic subsets (see e.g. \cite[section 3.1]{Voi13} for an detailed exposition). Inspired by these results, we conjecture the following:
\begin{conjecture}
	Let $f\colon X\to B$ be a smooth projective family of algebraic varieties. Then $NG(X/B)$ is a countable union of analytically closed subsets of $B$. 
\end{conjecture}
We provide some evidence for this conjecture by showing that under some assumption, if we work with pure variations of Hodge structure, then the conjecture is true. More precisely, consider the set
$$NG(X/B)^{\text{red}}:=\{b\in B: \iota_b^{ss}\colon \pi_1(\LS^\text{Hdg,ss}(X_b))\to \pi_1(\VHS(X))\text{ is not injective}\}.$$ We show that

\begin{theorem}\label{Theorem: countable union of closed analytic subsets}
	Suppose $f\colon X\to B$ is a smooth projective family of algebraic varieties such that $\pi_1^{\text{\'{e}t}}(X_b)$ injects into $\pi_1^{\text{\'{e}t}}(X)$. Then $NG(X/B)^\text{red}$ is a countable union of closed analytic subsets of $B$. 
\end{theorem}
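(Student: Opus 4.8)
The plan is to combine Proposition \ref{Prop: closed embedding of Tannaka groups} with a relative moduli (character variety) analysis, mirroring Cattani--Deligne--Kaplan's treatment of the Hodge locus \cite{CDK95} but on the non-abelian side. By Proposition \ref{Prop: closed embedding of Tannaka groups}, the map $\iota_b^{ss}$ fails to be a closed embedding exactly when $X_b$ carries an irreducible $\Q$-local system $\mathbb{V}$ underlying a polarizable $\Q$-variation of pure Hodge structure that is \emph{not} a subquotient of $\mathbb{W}'|_{X_b}$ for any $\mathbb{W}'$ underlying a polarizable $\Q$-variation of Hodge structure on $X$; the semisimple analogue of Lemma \ref{Lemma: from mixed to Lam} guarantees that the only objects capable of obstructing injectivity are such variations. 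Thus $NG(X/B)^{\text{red}}$ is the locus of $b$ over which a ``new'' irreducible polarizable $\Q$-variation of Hodge structure appears on the fiber. Since every object decomposes into irreducibles, and the rank $n$ together with the Hodge numbers $(h^{p,q})$ are discrete invariants taking countably many values, I would first stratify by these data, reduce to showing that each stratum contributes a closed analytic subset of $B$, and take a countable union at the end.

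Next I would set up the relevant moduli. Because $f$ is smooth and projective it is a topological fiber bundle (Ehresmann), so $\pi_1(X_b)$ is locally constant in $b$ and the Betti character varieties of $\GL_n(\C)$-local systems assemble into an analytic family $q:\mathcal{M}_n\to B$ equipped with its isomonodromy (relative Gauss--Manin) foliation, whose leaves are exactly the local systems extending locally over $B$. The integral structure and irreducibility cut out a monodromy-invariant, countable collection of components/points, again folded into the countable union. Inside $\mathcal{M}_n$ I would isolate two loci: the non-abelian Hodge locus $\mathcal{H}_n$ of points underlying a polarizable $\C$-variation of Hodge structure of the fixed Hodge type, and the locus $\mathcal{C}_n$ of points that occur as constituents of restrictions to fibers of variations of Hodge structure on $X$.

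The heart of the argument is to show that $\mathcal{H}_n$ determines a closed \emph{complex} analytic condition on $B$. The input is Simpson's identification of $\C$-variations of Hodge structure with fixed points of the $\C^\times$-action on the Dolbeault moduli space \cite{Sim97}, together with the holomorphic variation of the Hodge filtration. Exactly as in \cite{CDK95}, even though the non-abelian Hodge correspondence is only a real-analytic homeomorphism, the condition that a flat bundle admit a Hodge filtration satisfying Griffiths transversality is the vanishing of holomorphic sections, since the filtration itself is a holomorphic datum; working on the relative de Rham/Higgs moduli over $B$, where this period data is holomorphic, and pushing forward should produce complex analytic loci downstairs. The hypothesis $\pi_1^{\text{\'{e}t}}(X_b)\hookrightarrow\pi_1^{\text{\'{e}t}}(X)$ enters precisely in pinning down $\mathcal{C}_n$: following the non-abelian theorem of the fixed part of Esnault--Kerz \cite{EK24}, it forces an irreducible Hodge-theoretic local system on $X_b$ to be a constituent of a variation on $X$ if and only if it is invariant under the $\pi_1(B)$-monodromy on the character variety, thereby identifying $\mathcal{C}_n$ with the monodromy-invariant, isomonodromy-flat part of $\mathcal{H}_n$, a closed analytic subset. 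One then has $b\in NG(X/B)^{\text{red}}$ iff $\mathcal{H}_n\cap q^{-1}(b)\not\subset\mathcal{C}_n$.

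The step I expect to be the main obstacle is deducing from this that the locus in $B$ is genuinely closed analytic rather than merely constructible: a priori the image $q(\mathcal{H}_n\setminus\mathcal{C}_n)\subset B$ is only the projection of an analytic set. The real work, as in \cite{CDK95}, is to exhibit the extra-variation condition \emph{locally on $B$} as the vanishing of holomorphic functions — roughly, that along a local holomorphic family realizing a candidate new variation the obstruction is a holomorphic section whose zero locus is the stratum — and to verify that the \'{e}tale injectivity hypothesis really does make $\mathcal{C}_n$ complex analytic and capture every ``old'' variation, so that nothing spurious is counted. Granting these two points, summing over the countably many discrete types $(n,(h^{p,q}),\text{components})$ yields $NG(X/B)^{\text{red}}$ as a countable union of closed analytic subsets of $B$.
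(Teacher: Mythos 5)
Your framework is the right one (Simpson's $\mathbb{G}_m$-fixed points plus integral structure, cut against local systems coming from $X$, with Esnault--Kerz controlling the latter), but the proposal has a genuine gap exactly at the step you yourself flag as ``the main obstacle'': you never produce the mechanism that makes the locus in $B$ closed analytic rather than merely the image of an analytic set. The paper's proof closes this with a theorem you do not invoke: Simpson's Theorem 12.1 in \cite{Sim97}, which says that the non-abelian Hodge locus $NAHL(X/B) = V_{\text{dR}}\cap \mc{M}_{\text{dR}}(X/B,\Z)$ carries a canonical reduced complex analytic structure \emph{and} that the projection $NAHL(X/B)\to B$ is \emph{proper}. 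Properness is the whole point: a proper holomorphic image of a closed analytic set is closed analytic, so each irreducible component of $NAHL(X/B)$ that fails to dominate $B$ has closed analytic image, and $NG(X/B)^{\text{red}}$ is the countable union of these images (Lemma \ref{Lemma: description of NG}). Your proposed substitute --- re-running the Cattani--Deligne--Kaplan local holomorphic-vanishing analysis on the non-abelian side --- is not an available technique here; in the non-abelian setting the compactness input is Simpson's properness theorem, which is a hard result and cannot be waved in as ``the real work, as in \cite{CDK95}.'' Without citing it, the argument does not terminate.

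A secondary inaccuracy: your use of Esnault--Kerz is stated as ``a constituent of a variation on $X$ if and only if it is invariant under the $\pi_1(B)$-monodromy,'' but the relevant hypothesis in \cite{EK24} is \emph{finite orbit} under $\pi_1(B)$, not invariance, and the conclusion is extension to a local system underlying a $\Q$-variation of Hodge structure on a \emph{finite \'{e}tale cover} $X'\to X$ only. One then needs the additional step of pushing forward along $X'\to X$ to obtain a variation on $X$ itself whose restriction to $X_b$ contains $\mathbb{V}$ as a subobject. The logic also runs in the opposite direction from what you wrote: one shows that any $\mathbb{V}$ lying in a component of $NAHL(X/B)$ that \emph{does} surject onto $B$ has isomonodromic deformations underlying variations of Hodge structure on every fiber, whence finite orbit by \cite[Theorem 1.4]{EK24} (this is where the hypothesis $\pi_1^{\text{\'{e}t}}(X_b)\hookrightarrow \pi_1^{\text{\'{e}t}}(X)$ is consumed), whence ``old''; so $NG(X/B)^{\text{red}}$ is precisely the image of the non-dominating components, and properness finishes the proof.
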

We prove this theorem by relating this set $NG(X/B)^{\text{red}}$ to Simpson's non-abelian Hodge locus \cite[section 12]{Sim97} (also known as the non-abelian Noether-Lefschetz locus). We briefly recall the definition of non-abelian Hodge locus and some known results about it. Let $f\colon X\to B$ be a smooth projective family of algebraic varieties. Let $\mc{M}_{\text{Dol}}(X/B)$ be the relative coarse moduli space for semistable principal Higgs bundles $(\mc{E},\phi)$ with vanishing rational Chern classes. Note that two semistable Higgs bundles with the same semi-simplification will correspond to the same point in this moduli space. Let $\mc{M}_{b}(X/B)$ the relative coarse moduli space of vector bundles with flat connections. From non-abelian Hodge theory (see e.g. \cite{Sim92}), we get two things: 
\begin{enumerate}
	\item an homeomorphism between $\mc{M}_{\text{Dol}}(X/B)$ and $\mc{M}_{\text{dR}}(X/B)$ known as the non-abelian Hodge correspondence \cite[page 12]{Sim97};
	\item a $\mathbb{G}_m$-action on $\mc{M}_{\text{Dol}}(X/B)$ whose fixed points are exactly those Higgs bundles that correspond to systems of Hodge bundles associated to a $\C$-variation of Hodge structure \cite[Corollary 4.2]{Sim92}.
\end{enumerate}
Let $V$ be the set of fixed points of the action of $\mathbb{G}_m$ on $\mc{M}_{\text{Dol}}(X/B)$ and let $V_{\text{dR}}$ be its image in $\mc{M}_{\text{dR}}(X/B)$ under the non-abelian Hodge correspondence. Now let $\mc{M}_{\text{dR}}(X/B,\Z)$ be the subset of $\mc{M}_{\text{dR}}(X/B)$ which over each fiber $X_b$ correspond to a flat bundle whose associated monodromy representation has an integral structure. Simpson's non-abelian Hodge locus $NAHL(X/B)$ is then defined as the intersection of these two sets $$NAHL(X/B):=V_{\text{dR}}\cap \mc{M}_{\text{dR}}(X/B,\Z).$$ 
\begin{remark}\label{Remark: remark on connected components and isomonodromic deformation}
	Note that because $\GL_n(\Z)$ is discrete, $\mc{M}_{\text{dR}}(X/B,\Z)$, and hence $NAHL(X/B)$, are horizontal with respect to the isomonodromic foliation. In particular, two local systems are in the same connected components of $NAHL(X/B)$ if and only if they are related via an isomonodromic deformation (for the definition of an isomonodromic deformation, see \cite[Definition 3.4.3]{LL24}).
\end{remark}
We need the following important fact concerning Simpson's non-abelian Hodge locus:
\begin{proposition}[Theorem 12.1 in \cite{Sim97}]\hfill
\begin{enumerate}
	\item Simpson's non-abelian Hodge locus $NAHL(X/B)$ has a unique structure of a reduced analytic variety such that the inclusion $NAHL(X/B)\hookrightarrow \mc{M}_{\text{dR}}(X/B)$ is complex analytic.
	\item The canonical map $\mc{M}_{\text{dR}}(X/B)\to B$ restricts to a proper map $NAHL(X/B)\to B$. 
\end{enumerate}
\end{proposition}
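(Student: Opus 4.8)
The plan is to reconstruct Simpson's argument in two stages: first to endow $V_{\text{dR}}$, and hence $NAHL(X/B)$, with an intrinsic complex analytic structure through Hodge filtrations, and then to establish properness over $B$ using the discreteness of integral monodromy together with the compactness properties of period maps.

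\textbf{Part (1): the analytic structure.} The natural starting point is that $V$, being the fixed locus of the $\mb{G}_m$-action on the quasi-projective relative moduli space $\mc{M}_{\text{Dol}}(X/B)$, is a closed reduced algebraic subscheme with its canonical structure; the difficulty is that the non-abelian Hodge correspondence is only a homeomorphism, so this structure cannot be transported directly to $V_{\text{dR}}$. I would instead produce an intrinsic structure from the pointwise characterization: a flat bundle $(E,\nabla)$ on a fiber $X_b$ underlies a $\C$-variation of Hodge structure precisely when it carries a holomorphic filtration $F^\bullet$ satisfying Griffiths transversality $\nabla F^p\subset F^{p-1}\otimes\Omega^1_{X_b}$ whose associated graded $\bigoplus_p \Gr^p_F E$, with the Higgs field induced by $\nabla$, is a polystable system of Hodge bundles, i.e. a $\mb{G}_m$-fixed point. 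Since the Hodge numbers $(h^p)$ are discrete, I stratify by numerical type; for each type the admissible filtrations form (locally over $\mc{M}_{\text{dR}}(X/B)$, or on the moduli stack) a relative flag bundle, inside which Griffiths transversality and the fixed-point/polystability conditions cut out a closed analytic subset. The forgetful map to $\mc{M}_{\text{dR}}(X/B)$ is proper because the flag bundle is, so each image is a closed analytic subset, and $V_{\text{dR}}$ is their locally finite union; intersecting with the closed integrality condition $\mc{M}_{\text{dR}}(X/B,\Z)$ yields the reduced analytic structure on $NAHL(X/B)$, and uniqueness is forced by requiring the inclusion into $\mc{M}_{\text{dR}}(X/B)$ to be analytic. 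The conceptual check that this analytic locus coincides as a set with the topological image of the algebraic fixed locus $V$ is supplied by the $\lambda$-connection picture: the $\mb{G}_m$-action extends to the Hodge moduli space over $\mathbb{A}^1_\lambda$ whose fixed points over $\lambda=0$ are exactly $V$, and a de Rham point lies in $V_{\text{dR}}$ iff its rescaling limit as $\lambda\to 0$ lands in $V$.

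\textbf{Part (2): properness.} To prove $NAHL(X/B)\to B$ proper I would first note that the fibers are discrete: a point of $\mc{M}_{\text{dR}}(X/B,\Z)$ has monodromy representation into $GL_n(\Z)$, so all its trace coordinates are integers, cutting out a discrete subset of each character variety. It then suffices to verify the valuative criterion along an arbitrary holomorphic disc $\Delta\to B$, extending a section over the punctured disc $\Delta^*$ across the origin. Because $f$ is smooth and proper over all of $\Delta$, Ehresmann's theorem makes $X_\Delta\to\Delta$ a $C^\infty$ fiber bundle with no vanishing cycles, so the integral points $\mc{M}_{\text{dR}}(X/B,\Z)$ assemble over $\Delta$ into a discrete locally constant system for the associated flat structure; a continuous section valued in such a system is locally constant and therefore extends to a well-defined integral limit point over $0$. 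This limit lies in the closed analytic set $V_{\text{dR}}$ of Part (1) because the nearby points do, so it automatically underlies a polarized $\C$-variation of Hodge structure; concretely, the variations over $\Delta^*$ define a horizontal holomorphic map to a quotient $\Gamma\backslash D$ of the relevant period domain whose image is bounded (the polarization form is a fixed topological invariant since $X$ itself does not degenerate), and the map extends holomorphically over $0$ by Borel's extension theorem, equivalently by Schmid's nilpotent orbit theorem with finite monodromy. Combining discreteness of fibers with this non-escape at the boundary yields properness.

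\textbf{Main obstacle.} I expect the crux to be Part (1): reconciling the complex-analytic description of $V_{\text{dR}}$ via Hodge filtrations with the a priori only topological identification $V\cong V_{\text{dR}}$ coming from the non-abelian Hodge homeomorphism, and checking that the flag-bundle construction descends to a well-defined structure on the coarse moduli space. Once the analytic structure and the closedness of $V_{\text{dR}}$ are in place, the properness in Part (2) reduces to the discreteness of integral monodromy together with the standard compactness and extension theorems for period maps.
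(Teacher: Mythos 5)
First, a point of order: the paper does not prove this proposition at all — it is quoted verbatim as Theorem 12.1 of \cite{Sim97}, and the note uses it as a black box. So there is no in-paper proof to compare against; your proposal is an attempted reconstruction of Simpson's argument, and it has two genuine gaps.

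In Part (1), the load-bearing step fails as stated. A Hodge filtration on a flat bundle $(E,\nabla)$ over a fiber $X_b$ is a filtration by holomorphic \emph{subbundles of $E$ over $X_b$}, i.e.\ a holomorphic, Griffiths-transversal section of a flag bundle over $X_b$ — not a point of a flag bundle over the moduli space. The correct parameter space is a relative Douady/Quot-type space of filtrations, and its properness over $\mc{M}_{\text{dR}}(X/B)$ is precisely the nontrivial boundedness statement at the heart of Simpson's analyticity proof; it does not follow from compactness of flag varieties, so your appeal to Remmert's image theorem has no input. Two further problems: the coarse space $\mc{M}_{\text{dR}}(X/B)$ carries no universal bundle, and at strictly semistable points a point of the moduli space only records an S-equivalence (Jordan) class, so there is not even a well-defined bundle to filter — you flag the descent issue but do not resolve it, and it is not cosmetic. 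Finally, your ``conceptual check'' via $\lambda$-connections is incorrect: for any point $x$ of $\mc{M}_{\text{dR}}$ for which the rescaling limit $\lim_{t\to 0}t\cdot x$ in the Hodge moduli space exists, that limit automatically lies in the $\mathbb{G}_m$-fixed locus $V$ (limits of orbits are fixed points), so ``the limit lands in $V$'' holds generically and does not characterize $V_{\text{dR}}$; the correct characterization is that $x$ lies on a $\mathbb{G}_m$-invariant section, equivalently that the limit corresponds back to $x$ itself under the correspondence.

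In Part (2), a valuative criterion along discs does not establish properness of a complex analytic map. Your leaf description is the right skeleton (locally over $B$, the Betti trivialization makes $\mc{M}_{\text{dR}}(X/B,\Z)$ a countable, locally finite union of closed analytic leaves, each mapping isomorphically to an open set of $B$, and integral-trace discreteness is fine), but with discrete fibers, properness over a compact $K\subset B$ amounts to showing that only finitely many leaves meet the preimage of $K$ inside any bounded region — i.e.\ a finiteness/non-accumulation statement for integral representations underlying variations of Hodge structure on fibers over $K$. Extending a single locally constant section across a puncture, or extending one period map by Borel/Schmid, says nothing about infinitely many distinct leaves accumulating or points escaping to infinity in the moduli direction. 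The mechanism that actually delivers properness is different: a $\mathbb{G}_m$-fixed Higgs bundle has scale-invariant, hence vanishing, characteristic polynomial, so $V$ sits inside the nilpotent cone of the Hitchin fibration and $V\to B$ is proper because the Hitchin map is; transporting through the fiberwise homeomorphism makes $V_{\text{dR}}\to B$ topologically proper, and intersecting with the closed subset $\mc{M}_{\text{dR}}(X/B,\Z)$ preserves properness. That one observation replaces, and is not obtainable from, your disc argument.
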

We need to relate Simpson's non-abelian Hodge locus and $NG(X/B)^{\text{red}}$.
\begin{lemma}\label{Lemma: description of NG}
	If $f\colon X\to B$ is a smooth projective family of algebraic varieties such that $\pi^{\text{\'{e}t}}_1(X_b)$ injects into $\pi_1^{\text{\'{e}t}}(X)$, then the set $NG(X/B)^{\text{red}}$ is the image of the components of $NAHL(X/B)\subset \mc{M}_{\text{dR}}(X/B)$ which do not surject onto $B$ under the canonical map $\mc{M}_{\text{dR}}(X/B)\to B$.
\end{lemma}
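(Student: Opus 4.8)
The plan is to translate membership in $NG(X/B)^{\text{red}}$ into a question about which semisimple variation-of-Hodge-structure local systems on the fibre $X_b$ spread out over the base, to match such local systems with points of $NAHL(X/B)$ via non-abelian Hodge theory, and then to settle the spreading-out question using a non-abelian theorem of the fixed part; the content is concentrated in the last equivalence. For the translation, I apply Proposition \ref{Prop: closed embedding of Tannaka groups} to the pro-reductive quotients: the map $\iota_b^{ss}$ fails to be injective exactly when $X_b$ carries a semisimple $\Q$-local system $\mathbb{V}$, underlying a polarizable $\Q$-variation of Hodge structure, that is \emph{not} a subquotient of $\mathbb{W}|_{X_b}$ for any polarizable $\Q$-variation of Hodge structure $\mathbb{W}$ on $X$. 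Call such a $\mathbb{V}$ \emph{global} when it is such a subquotient; by semisimplicity one may read ``subquotient'' as ``direct summand''. Thus $b\in NG(X/B)^{\text{red}}$ if and only if $X_b$ carries a non-global semisimple variation-of-Hodge-structure local system.

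Next I would identify these local systems with points of $NAHL(X/B)$ over $b$. By the non-abelian Hodge correspondence, a semisimple local system on $X_b$ with integral monodromy underlies a $\C$-variation of Hodge structure precisely when its associated Higgs bundle is fixed by the $\mathbb{G}_m$-action, i.e.\ when its class lies in $V_{\text{dR}}\cap \mc{M}_{\text{dR}}(X/B,\Z)=NAHL(X/B)$ over $b$; matching $\Q$- with $\C$-coefficients and polarizability is carried out exactly as in the proof of Lemma \ref{Lemma: from mixed to Lam}. Hence the fibre of $NAHL(X/B)$ over $b$ is precisely the set of isomorphism classes of the local systems relevant to the previous paragraph, and everything reduces to deciding which classes are global.

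The heart of the argument is the equivalence: a class $[\mathbb{V}]$ in the fibre $NAHL(X/B)_b$ is global if and only if it lies on an irreducible component of $NAHL(X/B)$ that dominates $B$ (equivalently, by properness of $NAHL(X/B)\to B$, surjects onto $B$). The forward implication is easy: if $\mathbb{V}$ is a direct summand of $\mathbb{W}|_{X_b}$, then restricting the global variation $\mathbb{W}$ to every fibre produces a family of variation-of-Hodge-structure local systems, hence a subvariety of $NAHL(X/B)$ through $[\mathbb{V}]$ dominating $B$, so the component through $[\mathbb{V}]$ dominates $B$. The converse is the crux and uses both hypotheses. The \'{e}tale injection gives the short exact homotopy sequence $1\to \pi_1^{\text{\'{e}t}}(X_b)\to \pi_1^{\text{\'{e}t}}(X)\to \pi_1^{\text{\'{e}t}}(B)\to 1$, which governs when a local system on $X_b$ extends across the family. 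A component dominating $B$ has monodromy-invariant fibre over $b$ under the action of $\pi_1(B,b)$ on $\mc{M}_{\text{dR}}(X_b)$; after a finite \'{e}tale base change this invariance, together with the exact sequence, lets one extend $\mathbb{V}$ to a local system on the total space, and the non-abelian theorem of the fixed part of Esnault--Kerz \cite{EK24} then realises $\mathbb{V}$ as a direct summand of the restriction of a global polarizable variation of Hodge structure. Making this precise---reducing a dominating component to a genuinely monodromy-invariant object, controlling the finite ambiguity from the base change and the possibility of positive-dimensional fibres, and verifying that the fixed-part theorem applies in the required generality---is the main obstacle I anticipate.

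Finally I would assemble the lemma. By Simpson's properness (Theorem 12.1(2) of \cite{Sim97}) each of the countably many irreducible components of $NAHL(X/B)$ has closed image in $B$. By the equivalence, $b\in NG(X/B)^{\text{red}}$ if and only if some class in the fibre $NAHL(X/B)_b$ lies on no dominating component, which happens precisely over the images of the non-surjecting components; combined with properness this both identifies $NG(X/B)^{\text{red}}$ with the image of the non-surjecting components and exhibits it as a countable union of proper closed analytic subsets, recovering Theorem \ref{Theorem: countable union of closed analytic subsets}. The one point I would still need to check is that a class lying on a non-surjecting component but also on a surjecting one is global and hence harmless; ruling out that a non-surjecting component fails to contribute a new class over part of its image is a bookkeeping step, to be handled using properness of $NAHL(X/B)\to B$.
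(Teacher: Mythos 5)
Your proposal takes essentially the same route as the paper's proof: the easy containment by spreading out a global variation of Hodge structure, and the crux---that any class lying on a component of $NAHL(X/B)$ surjecting onto $B$ is a subquotient of the restriction of a global polarizable variation---via the $\pi_1(B)$-action on local systems on the fibre, the \'{e}tale-injectivity hypothesis, the non-abelian fixed part theorem of Esnault--Kerz \cite{EK24}, and pushforward from a finite cover. The technical points you flag (upgrading invariance of the fibre of a dominating component to finite orbit of the class itself, and the bookkeeping for classes lying on both surjecting and non-surjecting components) are exactly the steps the paper's own proof dispatches in a single sentence, so your plan matches the paper in both structure and key ingredients.
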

\begin{proof}
	Let $\mathbb{V}$ be a local system on $X_b$ whose isomorphism class is contained in one of the components of $NAHL(X/B)$ that do not surject onto $B$. Since $\mathbb{V}$ is semi-simple, without loss of generality, we may assume that $\mathbb{V}$ is irreducible. Then pick any $b_1$ not in the image of this component in $B$ and any path $\gamma$ from $b$ to $b_1$. Using the notation as before, let $\eta_\gamma^*\mathbb{V}$ be the corresponding local system on $X_{b_1}$. By definition, $\eta_\gamma^*\mathbb{V}$ is not contained in $NAHL(X/B)$ and so cannot underlie a polarizable $\Q$-variation of Hodge structure with an underlying integral structure. Then by the converse of Lemma \ref{Lemma: extending local systems} part 2, $\mathbb{V}$ is not contained in $\LS^{\text{ext}}(X_b)$. Hence, $b\in NG(X/B)^{\text{red}}$, and images of the components which do not surject onto $B$ are contained in $NG(X/B)^{\text{red}}$.
	
	It remains to show that if $\mathbb{V}$ is a local system on $X_b$ which is contained in a connected component that does surject onto all of $B$, then $\mathbb{V}$ is a subquotient (equivalently sub-object) of some local system which does extend to a variation of Hodge structure on all of $X$. This is essentially done in \cite{EK24}: by the proof of \cite[Theorem 1.1]{EK24} in section 6.3, (in particular, the direction of 5)$\implies$ 1)), we know that such a component must be finite, \'{e}tale over the base and hence the local system $\mathbb{V}$ is of finite orbit under the action of $\pi_1(B)$. Then \cite[Theorem 1.1]{EK24} (in particular the equivalence of 1) and 3)) says that $\mathbb{V}$ extends to a local system $\mathbb{V}'$ on 	some finite cover $X'\to X$ that underlies a $\Q$-variation of Hodge structure on $X'$ with an underlying integral structure. Then $f_*\mathbb{V}'$ is a local system underlying some $\Q$-variation of Hodge structure on $X$ whose restriction contains $\mathbb{V}$ as a sub-local system as desired.
\end{proof}
\begin{proof}[Proof of Theorem \ref{Theorem: countable union of closed analytic subsets}]
	The desired result follows immediately from Lemma \ref{Lemma: description of NG} since the map $f\colon NAHL(X/B)\to B$ is proper and hence closed.
\end{proof}

\begin{remark}\hfill
\begin{enumerate}
	\item The assumption that $\pi^{\text{\'{e}t}}_1(X_b)$ injects into $\pi_1^{\text{\'{e}t}}(X)$ is needed to apply the results of Esnault-Kerz. It's reasonable to conjecture that this description is still true without this assumption. 
	\item This assumption however is important from a purely group theoretic perspective: The main group theoretic result \cite[Theorem 4.1]{EK24} of Esnault-Kerz says that if $H$ is a subgroup of $G$ such that $\widehat{H}$ injects into $\widehat{G}$, then a semi-simple representation of $H$ extends to a semi-simple representation of some finite index subgroup $G'$ of $G$ containing $H$ if and only if this representation is of finite orbit under the action of $G/H$. This result is not true if we don't make any assumption. Indeed, this statement with no assumptions imposed would imply that any extension of residually finite group by residually finite group is residually finite, as we explain now: consider the extension of groups $$1\to H\to G\to G/H\to 1,$$ where both $H$ and $G/H$ are residually finite. To show that $G$ is residually finite, it's enough to show that every non-zero element $g\in H\subset G$ is non-zero in some finite quotient of $G$. Since $H$ is residually finite, we know that $H$ has some finite quotient in which $g$ remains non-zero. As every finite group is linear, we see that there's a representation $\rho$ of $H$ of finite image such that $\rho(g)\neq 0$. Since every representation of finite image is automatically of finite orbit under the action of $G/H$, then under the assumption, we may extend it to a representation $\rho'$ of some finite index subgroup $G'$ of $G$ containing $H$. In particular $\rho'(g)\neq 0$. Let $K'$ be the kernel of $\rho'$. It's also of finite index in $G$ and hence the intersection of all subgroups conjugate to $K'$ is a normal, finite index subgroup. Furthermore, $g$ is again non-zero in the corresponding finite quotient, and so $G$ is residually finite as claimed. \footnote{The original argument had a mistake and we thank Professor Esnault for pointing out the mistake.} On the other hand, Millson \cite{Mil79} has constructed an example of an extension of residually finite group by a finite group that's not residually finite. Hence, some assumption is needed for the main group theoretic result of Esnault-Kerz to hold.
\end{enumerate}
\end{remark}

Let's return to the set $NG(X/B)$. We know that in the abelian setting, the Hodge conjecture famously implies that the Hodge exceptional locus is in fact algebraic and this is eventually proven unconditionally in \cite{CDK95}. Similarly in the non-abelian setting, the non-abelian version of the Hodge conjecture \cite[Conjecture 12.4]{Sim97} would also imply that Simpson's non-abelian Hodge locus, and therefore $NG(X/B)^{\text{red}}$, is algebraic. It seems reasonable to ask if the same is true with no assumptions and without having to pass to pro-reductive quotient:
\begin{question}
	Let $f\colon X\to B$ be a smooth projective family of algebraic varieties. Is $NG(X/B)$ always a countable union of algebraic subsets of $B$?
\end{question}

\section{Case study: moduli space of degree $1$ line bundles on the universal curve}\label{section: Pic^1}
In this section, we study the example of moduli space of degree $1$ line bundles on the universal curve $p\colon \Pic^1_{\mc{C}_g/\mc{M}_g}\to \mc{M}_g$. Let $C$ be a smooth projective curve of genus $g$. We first give an explicit description of $\pi_1(\VMHS(\Pic^1(C)))$. In fact, this description works for any algebraic variety $A$ whose topological fundamental group is a finitely generated free abelian group, so we first work in that generality.

\begin{proposition}\label{Prop: Tannakian fundamental group for A.V.}
	Let $A$ be smooth connected complex algebraic variety with $\pi_1(A)=\Z^{k}$. For any choice of base point, we have $\pi_1(\LS^{\text{Hdg}}(A))=\mathbb{G}_a^{k}\times \widehat{\Z}^{k}$.
\end{proposition}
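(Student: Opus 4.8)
The plan is to first give an explicit description of the Tannakian category $\LS^{\text{Hdg}}(A)$ as a representation category of $\pi_1(A)=\Z^k$, and then read off its Tannakian fundamental group. Since $\pi_1(A)=\Z^k$, the category $\LS(A)$ is the category of representations $\rho\colon\Z^k\to\GL_n(\Q)$ preserving a lattice. I claim that $\LS^{\text{Hdg}}(A)$ is precisely the full subcategory of those $\rho$ whose semisimplification $\rho^{\mathrm{ss}}$ has finite image; equivalently, writing $\rho(e_i)=s_iu_i$ for the Jordan decompositions of the images of the standard generators, the semisimple parts $s_i$ have finite order. The bulk of the work is establishing this description; once it is in hand, identifying the group is essentially formal.

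For the inclusion ``$\subseteq$'': if $\mathbb V\in\LS^{\text{Hdg}}(A)$ then it is a subquotient of some $\mathbb W$ underlying a variation of mixed Hodge structure $\mathcal W$ on $A$, so $\mathbb V^{\mathrm{ss}}$ is a direct summand of $\Gr_\bullet\mathbb W$, which underlies the polarizable $\Q$-variation of Hodge structure $\Gr^W\mathcal W$; hence $\mathbb V^{\mathrm{ss}}$ itself underlies a polarizable $\Q$-variation of Hodge structure. Now I would exploit that $\pi_1(A)$ is abelian: the associated system of Hodge bundles of a semisimple $\C$-variation with abelian monodromy decomposes, via Simpson's correspondence, into rank-one Higgs bundles, and the $\C^\times$-fixed-point condition characterizing variations of Hodge structure forces each of their Higgs fields to vanish, so $\mathbb V^{\mathrm{ss}}$ is unitary. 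A unitary local system with integral structure has relatively compact, hence finite, monodromy inside $\GL_n(\Z)$; this gives $\rho^{\mathrm{ss}}$ finite. For the reverse inclusion ``$\supseteq$'': given $\rho$ with $\rho^{\mathrm{ss}}$ finite, pass to the connected finite \'{e}tale cover $\pi\colon\tilde A\to A$ trivializing the $s_i$, so that $\pi^*\rho$ is unipotent. Every unipotent local system underlies an admissible graded-polarizable variation of mixed Hodge structure (this is the Hodge theory of the unipotent fundamental group, after Hain), so $\pi^*\rho$ lies in $\LS^{\text{Hdg}}(\tilde A)$; pushing forward and using that $\rho$ is a direct summand of $\pi_*\pi^*\rho$ exhibits $\rho$ as a subobject of a local system underlying a variation of mixed Hodge structure on $A$, so $\rho\in\LS^{\text{Hdg}}(A)$.

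Finally I would compute $G:=\pi_1(\LS^{\text{Hdg}}(A))$. Because the image of $\Z^k$ is Zariski dense and abelian, $G$ is commutative. Its semisimple objects are exactly the finite-image representations (finite-image representations of $\Z^k$ are semisimple by Maschke's theorem), so the pro-reductive quotient of $G$ is the Tannakian group of $\bigcup_N\mathrm{Rep}_\Q(\Z^k/N)$, namely $\varprojlim_N\underline{\Z^k/N}=\widehat\Z^k$. Its pro-unipotent radical is governed by the unipotent objects, i.e. by the pro-unipotent completion of $\Z^k$, which is $\mathbb G_a^k$. It then remains to see that the extension $1\to\mathbb G_a^k\to G\to\widehat\Z^k\to 1$ is a direct product: this follows from commutativity together with the vanishing $\Ext^1(F,\mathbb G_a)=0$ for every finite group scheme $F$ in characteristic $0$ (multiplication by $|F|$ acts both as $0$ and invertibly on this group), and passing to the inverse limit. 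Hence $G\cong\mathbb G_a^k\times\widehat\Z^k$.

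The step I expect to be the crux is the finiteness of $\rho^{\mathrm{ss}}$ --- that is, the assertion that abelian $\pi_1$ admits no ``extra'' semisimple local systems underlying a variation of Hodge structure beyond the finite-image ones. The unitarity argument above is the conceptual heart of the matter; the realization of \emph{all} unipotent local systems by admissible variations of mixed Hodge structure is the other essential input, which I would import from the Hodge theory of the unipotent fundamental group rather than prove by hand. The concluding splitting is then a soft computation with commutative affine group schemes.
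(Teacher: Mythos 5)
Your overall strategy is essentially the paper's: show that the semisimplification of anything in $\LS^{\text{Hdg}}(A)$ has finite monodromy (your Higgs-field-vanishing argument is a fine variant of the paper's positive-definite-polarization argument), reduce to the unipotent case by passing to a finite cover (your formulation via the \'{e}tale cover trivializing the $s_i$ is in fact cleaner than the paper's multiplication-by-$m$ map, which tacitly presumes a group structure on $A$), and then identify the Tannakian group. The ``$\subseteq$'' direction and the concluding computation with commutative affine group schemes are fine. However, there is a genuine error in your ``$\supseteq$'' direction.

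The claim that \emph{every} unipotent local system underlies an admissible graded-polarizable variation of mixed Hodge structure is false, and it fails precisely in the cases this proposition is applied to ($A$ projective, e.g.\ $A=\Pic^1(C)$). Take a non-split extension $0\to \Q\to \mathbb{V}\to \Q\to 0$ of local systems with class $0\neq e\in H^1(A,\Q)$. If $\mathbb{V}$ underlay such a variation $\mathcal{V}$, its weight-graded pieces would be rank-one polarizable pure variations with trivial monodromy, hence constant Tate twists; they cannot lie in a single weight (a pure graded-polarizable variation has semisimple underlying local system, which would force $\mathbb{V}$ to split), so after a Tate twist $\mathcal{V}$ is an extension of $\Q(0)$ by $\Q(p)$ with $p\geq 1$. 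Then $e$ is the image of $1$ under the connecting map $H^0(A,\Q(0))\to H^1(A,\Q(p))$, which is a morphism of mixed Hodge structures by admissibility; for $A$ smooth projective the target is pure of weight $1-2p<0$, so strictness forces $e=0$, a contradiction. (Your claim is not empty in general: on $\mathbb{G}_m^k$, where $H^1$ has weight $2$, the Kummer variation realizes any class in $H^1$; but the proposition also covers projective $A$, where the claim fails.) What is true --- and is all you need, since membership in $\LS^{\text{Hdg}}(A)$ only requires being a \emph{subquotient} of a local system underlying a variation --- is the Hain--Zucker statement that $\Q[\pi_1(A,x)]/J^N$ underlies an admissible graded-polarizable variation of mixed Hodge structure, so that every unipotent local system, being a quotient of $(\Q[\pi_1(A,x)]/J^N)^r$, is such a subquotient. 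This is exactly the route the paper takes; with this substitution (and using exactness of $\pi_*$ for your finite cover $\pi$, so that $\rho\subset \pi_*\pi^*\rho$ is a subquotient of the pushforward of the group-ring variation), your argument closes.
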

The following lemma is probably well-known to experts, and we only prove it for completeness.
\begin{lemma}\label{Lemma: monodromy of semisimple local systems on A.V.}
	Let $A$ be a smooth connected complex algebraic variety with abelian fundamental group and $\mathbb{V}$ a $\Q$-local system underlying a polarizable $\Q$-variation of Hodge structure with an underlying integral structure. Then $\mathbb{V}$ is of finite monodromy. 
\end{lemma}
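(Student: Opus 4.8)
The plan is to study the Zariski closure of the monodromy group and show that abelianness, together with polarizability, forces it to be finite. Write $\rho\colon\pi_1(A)\to\GL(V)$ for the monodromy of $\mathbb{V}$, let $\Gamma=\rho(\pi_1(A))$, and let $\mathbf{H}\subseteq\GL(V)$ be the algebraic monodromy group, the $\Q$-Zariski closure of $\Gamma$. Because $\pi_1(A)$ is abelian, $\Gamma$ and hence $\mathbf{H}$ are commutative. Since $\mathbf{H}$ has finitely many connected components, the index $[\,\Gamma:\Gamma\cap\mathbf{H}^{0}(\C)\,]$ is finite, so it suffices to prove that $\mathbf{H}^{0}$ is trivial: then $\mathbf{H}$ is a finite group scheme, and the Zariski-dense subgroup $\Gamma\subseteq\mathbf{H}(\C)$ is finite, which is exactly the assertion.

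The key input I would use is the theorem of Deligne and Andr\'{e} that the connected algebraic monodromy group of a polarizable variation of Hodge structure is semisimple (more precisely, $\mathbf{H}^{0}$ is a connected normal subgroup of the derived group of the generic Mumford--Tate group, hence semisimple). A semisimple group is its own derived subgroup, so if $\mathbf{H}^{0}$ is moreover commutative then $\mathbf{H}^{0}=[\mathbf{H}^{0},\mathbf{H}^{0}]=\{1\}$. Thus the abelianness of $\pi_1(A)$ collapses the connected monodromy entirely, and finiteness follows as in the previous paragraph. Note that the integral structure plays no role here; it is the polarizability that matters.

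The main obstacle is precisely that this last step cannot be done by elementary or arithmetic means, so the appeal to semisimplicity of $\mathbf{H}^{0}$ is essential. It is tempting to argue that each $\rho(\gamma)$ lies in $\GL_{n}(\Z)$, so its eigenvalues are algebraic units, and that the flat polarization makes them come in pairs $\lambda,\lambda^{-1}$; one would then like to invoke Kronecker's theorem after showing every eigenvalue has absolute value $1$. But this fails: the bilinear polarization only pairs the character space $V_{\chi}$ with $V_{\chi^{-1}}$ and gives no control on absolute values, and indeed the monodromy of a polarizable variation is genuinely non-unitary in general (hyperbolic elements of a Fuchsian monodromy group have real eigenvalues $\neq\pm1$, and a $\Q$-torus may carry infinitely many integral points). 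What rescues the abelian case is not arithmetic but the Hodge-theoretic rigidity encoded in the theorem of the fixed part, which forbids a nontrivial torus from occurring as a connected monodromy group. I would therefore present the argument as a short deduction from that theorem, emphasizing that the abelian hypothesis is exactly what makes the semisimple group $\mathbf{H}^{0}$ degenerate.
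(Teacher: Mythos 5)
There is a genuine gap, and it sits exactly where you declare the argument safe: in the claim that ``the integral structure plays no role here; it is the polarizability that matters.'' The semisimplicity statement you invoke --- that the connected algebraic monodromy group $\mathbf{H}^0$ of a polarizable VHS is semisimple (normal in the derived generic Mumford--Tate group) --- is only a theorem in the presence of an integral (or at least arithmetic) structure; for $\Q$-variations with no integral structure it is \emph{false}. Concretely, take any smooth connected variety $A$ with $\pi_1(A)\cong\Z$ and let $\mathbb{V}$ be the rank $2$ $\Q$-local system whose monodromy generator is $R=\frac{1}{5}\left(\begin{smallmatrix}3 & -4\\ 4 & 3\end{smallmatrix}\right)$. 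Declare $\mathbb{V}$ to be pure of weight $0$, entirely of type $(0,0)$, polarized by the standard inner product: this form is flat (since $R$ is orthogonal) and positive definite, so $\mathbb{V}$ is a genuine polarized $\Q$-VHS. By Niven's theorem $\arccos(3/5)$ is an irrational multiple of $\pi$, so $R$ has infinite order, and the algebraic monodromy group is the anisotropic one-dimensional torus $\mathrm{SO}_2$ over $\Q$: connected, commutative, nontrivial, and not semisimple. This shows at once that (i) your key input fails for $\Q$-VHS; (ii) the Lemma itself is false once the integral structure is dropped, so no proof ``not using integrality'' can exist; and (iii) the theorem of the fixed part does \emph{not} forbid a torus from occurring as connected monodromy --- this local system has trivial fixed part, consistently with that theorem.

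Your proof can be repaired, because the Lemma does assume an integral structure: cite the semisimplicity theorem in its correct form (for polarizable VHS with a $\Z$-structure), and then commutativity forces $\mathbf{H}^0=[\mathbf{H}^0,\mathbf{H}^0]=\{1\}$ and finiteness follows as you argue. But it is worth seeing that the standard proof of that theorem kills a putative central torus by precisely the mechanism you dismiss: the characters of the torus cut out rank-one complex summands which, underlying polarized $\C$-VHS, have \emph{positive definite} flat polarization and hence unitary monodromy; integrality (discreteness of $\GL_n(\Z)$, or Kronecker's theorem) then upgrades unitarity to finiteness. This is also the paper's proof of the Lemma: since $\pi_1(A)$ is abelian, $\mathbb{V}\otimes\C$ splits into rank-one summands; each summand underlies a polarized $\C$-VHS by non-abelian Hodge theory; a rank-one polarization is definite, so the monodromy is unitary; and a unitary subgroup of $\GL_n(\Z)$ is finite, being both bounded and discrete. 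Your objection that the polarization ``only pairs $V_{\chi}$ with $V_{\chi^{-1}}$'' and that Fuchsian monodromy is non-unitary is misdirected: hyperbolic Fuchsian examples have non-abelian monodromy, while in the abelian case the pairing is forced to be definite summand by summand. In short, Hodge theory supplies unitarity and arithmetic supplies finiteness; your proposal tries to extract finiteness from Hodge theory alone, which the $\mathrm{SO}_2$ example shows is impossible.
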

\begin{proof}
Let $\mathbb{V}$ be such a local system. Then $\mathbb{V}\otimes \C$ underlies a polarizable $\C$-variation of Hodge structure and since $\pi_1(A)$ is abelian, it splits into irreducible rank $1$ summands $\mathbb{V}\otimes \C=\bigoplus \mathbb{V}_i$. By \cite[Prop. 1.13]{Del87}, each $\mathbb{V}_i$ underlies a polarized $\C$-variation of Hodge structure. In particular, the polarization has to be positive-definite and hence $\mathbb{V}_i$ all have unitary monodromy. It follows that $\mathbb{V}$ has unitary monodromy as well. Finally, since it in addition has an associated integral structure, it's finite as desired.
\end{proof}
\begin{proof}[Proof of Proposition \ref{Prop: Tannakian fundamental group for A.V.}]
	We need to show that the category of representations of $\mathbb{G}_a^{k}\times \widehat{\Z}^{k}$ is equivalent to $\LS^{\text{Hdg}}(A)$ and the equivalence is compatible with tensor products and fiber functors. Let $\mathbb{V}$ be a local system in $\LS^{\text{Hdg}}(A)$. Then it's given by $k$ commuting linear operators $L_i$ on a fiber $\mathbb{V}_x$. We know it is a subquotient of some local system which underlies a $\Q$-variation of mixed Hodge structure with an underlying integral structure, so the semi-simplification $\Gr_\bullet\mathbb{V}$ is a summand of some $\Q$-local system underlying a polarizable $\Q$-VHS with an underlying integral structure. Then Lemma \ref{Lemma: monodromy of semisimple local systems on A.V.} tells us that $\Gr_\bullet\mathbb{V}$ is of $\mathbb{V}$ finite monodromy, and so by the Jordan-Chevalley decomposition, we may write $L_i=U_iN_i$, where $U_i$ is of finite order and $N_i$ is unipotent. Since $U_i$ commutes with $N_i$, we see that we get a representation of $\mathbb{G}_a^{k}\times \widehat{\Z}^{k}$. This defines a functor $$\Phi\colon  \LS^{\text{Hdg}}(A)\to \text{Rep}(\mathbb{G}_a^{k}\times \widehat{\Z}^{k}).$$
	
Conversely, given a representation of $\mathbb{G}_a^{k}\times \widehat{\Z}^{k}$, we need to produce a local system in $\LS^{\text{Hdg}}(A)$. Such a representation is the same as $k$ linear operators $U_i$ of finite order and $k$ unipotent operators $N_i$ that all commute with each other. Hence we get a local system $\mathbb{V}$ of $A$ by assigning each generators of $\Z^{k}$ to $U_iN_i$. We need to show that $\mathbb{V}$ is a subquotient of a local system which underlies a variation of mixed Hodge structure. Since the semi-simplification of $\Gr^\bullet\mathbb{V}$ has finite monodromy, we know that there exists a finite \'{e}tale cover $\chi\colon A'\to A$ such that $\chi^*\mathbb{V}$ is a unipotent local system on $A'$. It follows that $\mathbb{V}$ is a sub-local system of $\chi_*\chi^*\mathbb{V}$. Therefore, it's enough to show that every unipotent local system $\mathbb{V}$ is a subquotient of some local system which underlies a variation of mixed Hodge structure. This follows from the work of D'Addezio-Esnault \cite[Theorem 4.4]{DE22} as well as the work of Jacobsen \cite[Theorem 7.2]{Jac22}, who showed that the full subcategory of local systems that are subquotients of local systems underlying graded-polarizable, admissible $\Q$ variation of mixed Hodge structure is closed under taking extensions.
	
Finally, it's clear that two constructions are inverses of each other, and are compatible with taking tensor products and fiber functors, so the desired result follows from Tannakian duality.
\end{proof}
\begin{remark}
	One may also prove that every unipotent local system is a subquotient of some local system underlying a variation of mixed Hodge structure by using the concrete description of unipotent variations of mixed Hodge structure by Hain-Zucker \cite[Theorem 1.6]{HZ87}.
\end{remark}
Now we return to the case of $p\colon \Pic^1_{\mc{C}_g/\mc{M}_g}\to \mc{M}_g$. The main theorem of this section is the following:
\begin{theorem}\label{Theorem: injectivity for Pic^1}
	Fix some $g\geq 2$, and let $C$ be a smooth projective curve of genus $g$. Then the following sequence is exact $$1\to \pi_1(\LS^\text{Hdg}(\Pic^1(C)))\to \pi_1(\VMHS(\Pic^1_{\mc{C}_g/\mc{M}_g})\to \pi_1(\VMHS(\mc{M}_g))\to 1$$
\end{theorem}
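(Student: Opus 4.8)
The plan is to combine the right-exactness of the sequence $(\ref{Hodge SES})$, which is already available, with a proof that the first arrow $\iota_{[C]}\colon \pi_1(\LS^{\text{Hdg}}(\Pic^1(C)))\to \pi_1(\VMHS(\Pic^1_{\mc{C}_g/\mc{M}_g}))$ is a closed embedding; together these give the asserted short exact sequence, the left-hand term being identified with $\mathbb{G}_a^{2g}\times\widehat{\Z}^{2g}$ by Proposition \ref{Prop: Tannakian fundamental group for A.V.} (note $\pi_1(\Pic^1(C))=H_1(C,\Z)\cong\Z^{2g}$). By Proposition \ref{Prop: closed embedding of Tannaka groups}, $\iota_{[C]}$ is a closed embedding precisely when every local system in $\LS^{\text{Hdg}}(\Pic^1(C))$ is a subquotient of the restriction to the fiber $\Pic^1(C)$ of a local system underlying an admissible, graded-polarizable $\Q$-VMHS on the total space $\Pic^1_{\mc{C}_g/\mc{M}_g}$. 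Since the class of such ``extendable'' local systems forms a Tannakian subcategory (VMHS being stable under tensor products, duals, subobjects and quotients), it suffices to check the condition on a set of Tannakian generators of $\LS^{\text{Hdg}}(\Pic^1(C))$.

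By the explicit description in the proof of Proposition \ref{Prop: Tannakian fundamental group for A.V.}, such generators are provided by (a) the truncated group-ring local systems $\Q[\pi_1(\Pic^1(C))]/J^N$, which exhaust the unipotent part corresponding to the factor $\mathbb{G}_a^{2g}$, and (b) the finite-order characters of $\pi_1(\Pic^1(C))$, corresponding to the factor $\widehat{\Z}^{2g}$; indeed, by that proof every object is already a subquotient of $[N]_{*}$ of a unipotent local system for suitable $N$. I would realize each type of generator as a fiberwise subquotient of a VMHS on the family separately. For the unipotent generators I would use the relative analogue of Hain--Zucker \cite{HZ87}: the universal family of abelian-variety torsors $p\colon\Pic^1_{\mc{C}_g/\mc{M}_g}\to\mc{M}_g$ carries a pro-admissible, graded-polarizable $\Q$-VMHS --- the relative logarithm sheaf --- whose restriction to each fiber is the pro-unipotent local system $\varprojlim_N \Q[\pi_1(\Pic^1(C))]/J^N$, whose associated graded is $\bigoplus_{k\ge 0}\operatorname{Sym}^k\mathcal{H}$ for $\mathcal{H}=R_1p_{*}\Q$ the standard weight $-1$ variation, and which carries the evident integral structure from the integral group ring; its finite truncations then contain each $\Q[\pi_1(\Pic^1(C))]/J^N$ as a fiberwise subquotient. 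For the finite generators I would take the finite \'etale cover $q_N\colon Y_N\to \Pic^1_{\mc{C}_g/\mc{M}_g}$ induced fiberwise by multiplication-by-$N$ on $\pi_1$ (constructed from the torsor structure over the abelian scheme $\Jac_{\mc{C}_g/\mc{M}_g}$, whose deck group is fiberwise $(\Z/N)^{2g}$); then $q_{N*}\Q$ is a weight-$0$ polarizable $\Q$-VHS on the total space whose restriction to a fiber is the direct sum of all characters of order dividing $N$, exhibiting each finite character as a $\Q$-summand. Applying $q_{N*}$ to the truncated logarithm sheaf simultaneously produces both features in a single VMHS if one prefers one witness.

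The main obstacle is the construction and control of these variations on the \emph{total space} of a family of torsors over the Deligne--Mumford stack $\mc{M}_g$, rather than on the base. Concretely, one must verify that the relative logarithm sheaf genuinely underlies an admissible, graded-polarizable $\Q$-VMHS with the stated integral structure, and that the multiplication-by-$N$ covers and their pushforwards are well defined over $\mc{M}_g$ even though $p\colon\Pic^1_{\mc{C}_g/\mc{M}_g}\to\mc{M}_g$ has no global section. The absence of a section is only a mild nuisance: the unipotent local systems are translation-invariant, so the logarithm sheaf descends to the torsor directly, while the finite characters can be produced on the associated abelian scheme $\Jac_{\mc{C}_g/\mc{M}_g}$ (where $[N]$ and $[N]_{*}\Q$ make literal sense) and transported via the $\Jac[N]$-torsor covers of $\Pic^1_{\mc{C}_g/\mc{M}_g}$. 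Once admissibility and graded-polarizability are established, every Tannakian generator of $\LS^{\text{Hdg}}(\Pic^1(C))$ is a fiberwise subquotient of a VMHS on the total space, so $\iota_{[C]}$ is a closed embedding by Proposition \ref{Prop: closed embedding of Tannaka groups}, and combining this with the right-exactness of $(\ref{Hodge SES})$ yields the short exact sequence.
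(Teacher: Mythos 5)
Your high-level architecture is the same as the paper's: reduce exactness to producing VMHS on the total space $\Pic^1_{\mc{C}_g/\mc{M}_g}$ whose restrictions to the fiber $\Pic^1(C)$ capture, separately, the unipotent factor $\mathbb{G}_a^{2g}$ and the finite factor $\widehat{\Z}^{2g}$. (The paper phrases this as joint faithfulness of restricted representations rather than your closed-embedding-via-generators criterion; the two are equivalent in effect, and the paper's version has the advantage of needing only \emph{one} unipotent witness rather than the whole tower $\Q[\pi_1]/J^N$.) However, both of your constructions have gaps, and they occur precisely at the point you dismiss as ``a mild nuisance'': $p\colon\Pic^1_{\mc{C}_g/\mc{M}_g}\to\mc{M}_g$ has no section (Franchetta), and this is where all the work lies.

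For the unipotent witness, translation-invariance holds for the underlying topological local systems but \emph{not} for the VMHS structure: on a single abelian variety $A$, pulling back $\mathrm{Log}^{(1)}$ by translation by $a$ changes its extension class in $\Ext^1_{\VMHS(A)}(\Q(0),\underline{H_1})$ by the Abel--Jacobi class of $a$, so the gluing data over a trivializing cover of the torsor is neither canonical nor cocyclic, and ``descends to the torsor directly'' is not a proof. In the family over $\mc{M}_g$ there is a genuine obstruction: a mapping-class-group-equivariant extension of $\Q(0)$ by $p^*\mathcal{H}$ with fiber monodromy exactly $\gamma\mapsto[\gamma]$ and the ``evident integral structure'' is exactly what Franchetta-type results rule out. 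What one can build globally --- the paper's relative cycle class of $(\mathbb{L}^{\text{univ}})^{\otimes(2g-2)}\otimes\omega^{\vee}$, or equivalently a pullback of the Jacobian's logarithm sheaf along the globally defined map $L\mapsto L^{\otimes(2g-2)}\otimes\omega^{\vee}\colon\Pic^1\to\Pic^0$ --- has fiber monodromy $\gamma\mapsto(g-1)[\gamma]$ (resp.\ $(2g-2)[\gamma]$). This is harmless over $\Q$ since the scaled extension is $\Q$-isomorphic to the group ring, but some such device, together with an actual computation showing the scaling factor is nonzero (the paper's Appendix via the Johnson homomorphism), is unavoidable; in your write-up this input is assumed rather than proven.

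The more serious gap is the finite part. A finite \'{e}tale cover of the total space restricting fiberwise to the multiplication-by-$N$ cover amounts to dividing the torsor class $[\Pic^1]$ by $N$, or at least to finding a finite-index subgroup of $\pi_1(\Pic^1_{\mc{C}_g/\mc{M}_g})$ meeting $\pi_1(\Pic^1(C))=H_1(C,\Z)$ exactly in $N\cdot H_1(C,\Z)$. Since $[\Pic^1]$ has order exactly $2g-2$ (Franchetta again), the torsor-theoretic construction you describe works only for $N$ coprime to $2g-2$; likewise, transporting $[N]_*\Q$ from $\Jac_{\mc{C}_g/\mc{M}_g}$ along any globally defined map raises characters to the power $2g-2$. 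Either way, every character you obtain kills $\prod_{p\mid 2g-2}\Z_p^{2g}\subset\widehat{\Z}^{2g}$ (note $2\mid 2g-2$ always), so your family of finite witnesses is \emph{not} jointly faithful. The paper resolves exactly this issue by a different mechanism that cannot be replaced by torsor considerations: it takes the monodromy representation $\rho$ of $\mc{E}_0$ on the whole total space --- faithful on $\pi_1(\Pic^1(C))$ by the Johnson-homomorphism computation --- and invokes Malcev's residual finiteness of finitely generated linear groups to get finite quotients of $\rho(\pi_1(\Pic^1_{\mc{C}_g/\mc{M}_g}))$ separating the elements of $\pi_1(\Pic^1(C))$; the associated finite-monodromy local systems on the total space are the finite witnesses. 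Without this (or an equivalent new idea) your argument does not close.
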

Note that to show that the map $\pi_1(\LS^{\text{Hdg}}(\Pic^1(C)))\to \pi_1(\VMHS(\Pic^1_{\mc{C}_g/\mc{M}_g}))$ is injective, it's enough to find representations of $\pi_1(\VMHS(\Pic^1_{\mc{C}_g/\mc{M}_g}))$ whose restrictions to $\mathbb{G}_a^{2g}\times \widehat{\Z}^{2g}$ are jointly faithful.

By unpacking the identification of $\pi_1(\LS^\text{Hdg}(\Pic^1(C)))=\mathbb{G}_a^{2g}\times \widehat{\Z}^{2g}$ in the proof of Proposition \ref{Prop: Tannakian fundamental group for A.V.} and using Tannakian duality, we see that it's enough to find local systems $(\mc{E}_i)_{i\in \mathbb{N}}$ on $\Pic^1_{\mc{C}_g/\mc{M}_g}$ satisfying the following conditions:
\begin{enumerate}
	\item All of the $\mc{E}_i$'s underlie some graded-polarizable, admissible $\Q$-variation of mixed Hodge structure $\Pic^1_{\mc{C}_g/\mc{M}_g}$;
	\item the restrictions of $\mc{E}_0$ to the fiber $\Pic^1(C)$ is unipotent and faithful;
	\item the restrictions of $(\mc{E}_i)_{i\geq 1}$ to the fiber $\Pic^1(C)$ are of finite monodromy and are jointly faithful.
\end{enumerate} We first explain how to construct the local system $\mc{E}_0$. This is inspired by the local systems studied by Hain-Matsumoto in \cite{HM05} and the key tool is to use homologically trivial relative cycles. 

Consider $\mc{X}:=\mc{C}_{g}\times_{\mc{M}_g}\Pic^1_{\mc{C}_g/\mc{M}_g}$ together with two projection maps $p_1\colon  \mc{X}\to \mc{C}_g$ and $p_2\colon \mc{X}\to \Pic^1_{\mc{C}_g/\mc{M}_g}$. By definition of $\Pic^1_{\mc{C}_g/\mc{M}_g}$, we know that there's a universal line bundle $\mathbb{L}^{\text{univ}}$ on $\mc{X}$. On each fiber of $p_2$, the line bundle $\mathbb{L}^{\text{univ}}$ restricts to a line bundle of degree $1$. We would like to modify $\mathbb{L}^{\text{univ}}$ so that the restriction is homologically trivial. Let $\omega:=p_1^*\omega_{\mc{C}_g/\mc{M}_g}$ where $\omega_{\mc{C}_g/\mc{M}_g}$ is the relative canonical bundle on the universal curve $\mc{C}_g\to \mc{M}_g$. Then we see that the line bundle $$\mathbb{L}:=(\mathbb{L}^{\text{univ}})^{2g-2}\otimes \omega^{\vee}$$ has the desired property, as the restriction of $\mathbb{L}$ to each fiber of $p_2$ is of degree $0$. Let $D$ be a divisor corresponding to $\mathbb{L}$ and let $|D|$ be the support of $D$ together with the inclusion map $\iota\colon |D|\to \mc{X}$.

Now the relative cycle class map yields an exact sequence $$0\to R^1(p_2)_*\Z(1)\to R^1(p_2|_{\mc{X}-|D|})_*\Z(1)\to (p_2|_{|D|})_*\mc{H}^2_{|D|}(\Z(1))\to R^2(p_2)_*\Z(1)\to \dots$$ where $\mc{H}^2_{|D|}(\Z(1)):=R^2\iota^!\Z(1)$ is the sheaf on $|D|$ whose pushforward gives the bundle of local cohomology groups with respect to the subset $|D|$ on $\mc{X}$. Since $D$ is homologically trivial on each fiber, we see that the image of the natural inclusion map $\Z\to (p_2|_{|D|})_*\mc{H}^2_{|D|}(\Z(1))$ induced by $D$ goes to $0$ in $R^2(p_2)_*\Z(1)$ and hence we may pullback to get a short exact sequence % https://q.uiver.app/#q=WzAsMTAsWzEsMCwiUl4xKHBfMSlfKlxcWigxKSJdLFsxLDEsIlJeMShwXzEpXypcXFooMSkiXSxbMiwwLCJSXjEocF8xfF97XFxQaWNeMV97XFxtY3tDfV9nL1xcbWN7TX1fZ30tfER8fSlfKlxcWigxKSJdLFsyLDEsIlxcbWN7RX0iXSxbMywwLCIocF8xfF97fER8fSlfKlxcbWN7SH1eezJ9X3t8RHx9KFxcWigxKSkiXSxbMywxLCJcXFooMCkiXSxbNCwwLCJSXjIocF8xKV8qXFxaKDEpIl0sWzAsMCwiMCJdLFswLDEsIjAiXSxbNCwxLCIwIl0sWzEsMCwiPSJdLFswLDJdLFsxLDNdLFszLDJdLFsyLDRdLFszLDVdLFs1LDRdLFs3LDBdLFs4LDFdLFs0LDZdLFs1LDldLFs5LDZdXQ==
\[\begin{tikzcd}[sep=small]
	0 & {R^1(p_2)_*\Z(1)} & {R^1(p_2|_{\Pic^1_{\mc{C}_g/\mc{M}_g}-|D|})_*\Z(1)} & {(p_2|_{|D|})_*\mc{H}^{2}_{|D|}(\Z(1))} & {R^2(p_2)_*\Z(1)} \\
	0 & {R^1(p_2)_*\Z(1)} & {\mc{E}_0} & {\Z(0)} & 0
	\arrow[from=1-1, to=1-2]
	\arrow[from=1-2, to=1-3]
	\arrow[from=1-3, to=1-4]
	\arrow[from=1-4, to=1-5]
	\arrow[from=2-1, to=2-2]
	\arrow["{=}", from=2-2, to=1-2]
	\arrow[from=2-2, to=2-3]
	\arrow[from=2-3, to=1-3]
	\arrow[from=2-3, to=2-4]
	\arrow[from=2-4, to=1-4]
	\arrow[from=2-4, to=2-5]
	\arrow[from=2-5, to=1-5]
\end{tikzcd}\]

Notice that all the maps involved are maps of variations of mixed Hodge structure, so $\mc{E}_0$ is a local system which underlies a graded-polarizable, admissible variation of mixed Hodge structure as well. Furthermore, since the fibers of $p_2$ over any point of $\Pic^1(C)\subset \Pic^1_{\mc{C}_g/\mc{M}_g}$ is simply $C$, if we restrict $\mc{E}_0$ to $\Pic^1(C)$, we get a unipotent local system $$0\to \underline{H^1(C,\Z(1))}\to (\mc{E}_0)|_{\Pic^1(C)}\to \underline{\Z(0)}\to 0$$ We would like to compute the monodromy of this unipotent local system. 

Recall that monodromy is a topological invariant and does not depend on the base point we choose. Let $x\in C$. This corresponds to a degree $1$ line bundle on $C$ and hence gives us a point $[x]\in \Pic^1(C)$. We compute the monodromy with respect to this base point. Since the local system is unipotent, the monodromy reprensentation is determined by a map $$H_1(C,\Z)=\pi_1(\Pic^1(C),[x])\to \Hom(\Z(0),H^1(C,\Z(1)))=H^1(C,\Z(1))=H_1(C,\Z).$$ In particular, the monodromy of the local system $(\mc{E}_0)|_{\Pic^1(C)}$ is faithful if $\rho_{\mc{E}_0}$ is injective.

Observe that $[x]\in \Pic^1(C)$ is in the image of the natural inclusion map $C\to \Pic^1(C)$ and we have the following commutative diagram % https://q.uiver.app/#q=WzAsNCxbMCwwLCJDIl0sWzEsMCwiXFxQaWNeMShDKSJdLFswLDEsIlxcbWN7Q31fZyJdLFsxLDEsIlxcUGljXjFfe1xcbWN7Q31fZy9cXG1je019X2d9Il0sWzAsMSwiIiwwLHsic3R5bGUiOnsidGFpbCI6eyJuYW1lIjoiaG9vayIsInNpZGUiOiJ0b3AifX19XSxbMCwyLCIiLDIseyJzdHlsZSI6eyJ0YWlsIjp7Im5hbWUiOiJob29rIiwic2lkZSI6InRvcCJ9fX1dLFsyLDMsIiIsMix7InN0eWxlIjp7InRhaWwiOnsibmFtZSI6Imhvb2siLCJzaWRlIjoidG9wIn19fV0sWzEsMywiIiwwLHsic3R5bGUiOnsidGFpbCI6eyJuYW1lIjoiaG9vayIsInNpZGUiOiJ0b3AifX19XV0=
\[\begin{tikzcd}
	C & {\Pic^1(C)} \\
	{\mc{C}_g} & {\Pic^1_{\mc{C}_g/\mc{M}_g}}
	\arrow[hook, from=1-1, to=1-2]
	\arrow[hook, from=1-1, to=2-1]
	\arrow[hook, from=1-2, to=2-2]
	\arrow["j", hook, from=2-1, to=2-2]
\end{tikzcd}\] The inclusion from $C$ into $\Pic^1(C)$ induces a surjection of topological fundamental group $\pi_1(C,x)\twoheadrightarrow \pi_1(\Pic^1(C),[x])$ and hence to compute the monodromy of $(\mc{E}_0)|_{\Pic^1(C)}$, it's enough to compute the monodromy of $(j^*\mc{E}_0)|_{C}$.

Now if we view $\mc{C}_g$ as $\mc{M}_{g,1}$, when we pull the family $p_2\colon \mc{C}_g\times_{\mc{M}_g}\to \Pic^1_{\mc{C}_g/\mc{M}_g}$ back along this embedding $j$, we get $$(\mc{C}_g\times_{\mc{M}_g}\Pic^1_{\mc{C}_g/\mc{M}_g})\times_{\Pic^1_{\mc{C}_g/\mc{M}_g}}\mc{C}_{g,1}=\mc{M}_{g,1}\times_{\mc{M}_g}\mc{M}_{g,1}=\mc{C}_{g,1}\to \mc{M}_g$$ which is the universal pointed curve over $\mc{M}_{g,1}$, and the divisor can be chosen to be $(2g-2)\xi-K$, where $\xi$ is the canonical section from $\mc{M}_{g,1}$ to $\mc{C}_{g,1}$, and $K$ is a relative canonical divisor for the map $\mc{C}_{g,1}\to \mc{M}_{g,1}$. In particular, we can relate $j^*\mc{E}_0$ to a certain local system studied by Hain and Matsumoto in \cite{HM05}. Building on their monodromy computation \cite[Proposition 6.4]{HM05}, we can prove the following:
\begin{proposition}\label{Proposition: key monodromy computation}
	The monodromy representation is given by
% https://q.uiver.app/#q=WzAsNSxbMCwwLCJcXHBpXzEoQyx4KSJdLFsxLDAsIlxccGlfMShcXFBpY14xKEMpLFt4XSkiXSxbMiwwLCJIXzEoQyxcXFopIl0sWzAsMSwiXFxnYW1tYSJdLFsyLDEsIihnLTEpXFxnYW1tYSJdLFswLDEsIiIsMix7InN0eWxlIjp7ImhlYWQiOnsibmFtZSI6ImVwaSJ9fX1dLFsxLDJdLFszLDQsIiIsMix7InN0eWxlIjp7InRhaWwiOnsibmFtZSI6Im1hcHMgdG8ifX19XV0=
\[\begin{tikzcd}
	{\pi_1(C,x)} & {\pi_1(\Pic^1(C),[x])} & {H_1(C,\Z)} \\
	\gamma && {(g-1)[\gamma]}
	\arrow[two heads, from=1-1, to=1-2]
	\arrow[from=1-2, to=1-3]
	\arrow[maps to, from=2-1, to=2-3]
\end{tikzcd}\] In particular, $(\mc{E}_0)|_{\Pic^1(C)}$ is faithful.
\end{proposition}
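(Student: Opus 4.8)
The plan is to compute the monodromy representation $\rho_{\mc{E}_0}\colon \pi_1(\Pic^1(C),[x]) = H_1(C,\Z) \to H^1(C,\Z(1)) \cong H_1(C,\Z)$ of the unipotent local system $(\mc{E}_0)|_{\Pic^1(C)}$ directly, and to show it is a nonzero integer multiple of the canonical identification. Faithfulness is then immediate: since $H_1(C,\Z) \cong \Z^{2g}$ is torsion-free and the multiplier will be seen to be $g-1 \geq 1$, multiplication by it is injective. Because monodromy is a topological invariant insensitive to the base point, and the inclusion $C \hookrightarrow \Pic^1(C)$ induces the surjection $\pi_1(C,x) \twoheadrightarrow \pi_1(\Pic^1(C),[x])$ recorded above, it suffices to evaluate $\rho_{\mc{E}_0}$ on the image of a loop $\gamma \in \pi_1(C,x)$, i.e. to compute the monodromy of the pulled-back local system $(j^*\mc{E}_0)|_C$.

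First I would make the pullback explicit. Viewing $\mc{C}_g$ as $\mc{M}_{g,1}$ and pulling the family $p_2$ back along $j$ produces the universal pointed curve $\mc{C}_{g,1}\to \mc{M}_{g,1}$, under which $\mathbb{L}^{\text{univ}}$ restricts to $\mc{O}(\xi)$ for the tautological section $\xi$ and $\omega$ restricts to the relative canonical bundle $\mc{O}(K)$. Thus the homologically trivial relative cycle defining $j^*\mc{E}_0$ becomes $D = (2g-2)\xi - K$, and $j^*\mc{E}_0$ is precisely the unipotent local system attached by the relative cycle-class sequence to $D$ on the universal pointed curve. This is exactly the family of local systems whose monodromy is analyzed by Hain--Matsumoto, so the second step is to match our construction (the Tate twist $\Z(1)$, the cycle-class exact sequence, and the Poincar\'{e}-duality identification $H^1(C,\Z(1)) \cong H_1(C,\Z)$) with their conventions.

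With this dictionary in place, the computation reduces to \cite[Proposition 6.4]{HM05}. The geometric content is that, as the marked point traces $\gamma$, the section $\xi$ moves according to the Abel--Jacobi map of a single point, whose induced map on $H_1$ is the canonical isomorphism, while the relative canonical divisor $K$ may be chosen independently of the moving point and so contributes trivially to the variation. Combining the coefficient $2g-2$ carried by $\xi$ with the normalization supplied by their proposition yields $\rho_{\mc{E}_0}(\gamma) = (g-1)[\gamma]$. I would stress that, for the faithfulness assertion of the proposition, one only needs the multiplier to be \emph{nonzero}, which holds for every $g \geq 2$ regardless of the precise constant; pinning the constant to $g-1$ is needed only for the clean statement.

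The step I expect to be the main obstacle is exactly this bookkeeping of normalizations. A naive Abel--Jacobi computation for the cycle $(2g-2)\xi - K$ suggests the multiplier $2g-2$, so landing on $g-1$ requires carefully tracking the factor of $2$ that is absorbed by the Tate twist and the Poincar\'{e}-duality identification in Hain--Matsumoto's formulation, together with a verification that $K$ genuinely decouples from the moving point. Once these conventions are aligned the remaining assembly is formal, and the injectivity of $\gamma \mapsto (g-1)[\gamma]$ on $\Z^{2g}$ gives the faithfulness claimed in Proposition~\ref{Proposition: key monodromy computation}.
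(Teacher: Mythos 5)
Your proposal takes a genuinely different route from the paper's, but as written it has a gap that is fatal for the stated constant $g-1$ (though not for the faithfulness clause). The paper does not apply \cite[Proposition 6.4]{HM05} to $j^*\mc{E}_0$ directly, and in fact it cannot: that proposition computes the monodromy of a different local system $\mc{E}_{\text{cer}}$, an extension of $\underline{\Z(0)}$ by $\mathbb{L}_\Z=[\wedge^3 H](-1)$, with values given by the Johnson homomorphism $\tau$. The bridge from there to $j^*\mc{E}_0$, which is an extension of $\underline{\Z(0)}$ by $\mathbb{H}_\Z$, is the whole content of the paper's appendix: one uses Pulte's theorem that the contraction $c:L\to H$, $c(x\wedge y\wedge z)=\omega(x,y)z+\omega(y,z)x+\omega(z,x)y$, is $\pi_1(\mc{M}_{g,1})$-equivariant, identifies $j^*\mc{E}_0$ with the pushout of $\mc{E}_{\text{cer}}$ along $c$, recalls that on point-pushing loops $\tau(\gamma)=\left(\sum_i x_i\wedge y_i\right)\wedge[\gamma]$, and then computes $c\left(\left(\sum_i x_i\wedge y_i\right)\wedge[\gamma]\right)=(g-1)[\gamma]$. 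Your proposal never introduces $c$ or the Johnson homomorphism, so the ``reduction to Hain--Matsumoto'' you invoke is not actually available: there is no statement in \cite{HM05} that directly computes the monodromy of an $H$-valued cycle extension, and the ``dictionary'' you defer to is precisely the missing proof.

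The more serious problem is the factor of two. You correctly observe that your own method, namely the variation of the Abel--Jacobi class of $(2g-2)\xi-K$ with $K$ chosen constant along the point-pushing family, produces the multiplier $2g-2$, and you propose to reconcile this with $g-1$ via ``a factor of $2$ absorbed by the Tate twist and the Poincar\'{e}-duality identification.'' No such mechanism can exist: Tate twists and Poincar\'{e} duality are isomorphisms of the underlying rank-$2g$ lattice, and composing multiplication by $2g-2$ with any automorphism of $H_1(C,\Z)$ can never equal multiplication by $g-1$, since that would force the automorphism to be $\tfrac{1}{2}\mathrm{id}$, which is not integral. So your write-up in effect establishes ``multiplier $=\pm(2g-2)$'' and then asserts, without possible justification of the proposed kind, that it equals $g-1$. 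In the paper the halving is not a matter of conventions at all: it happens inside the linear algebra of the contraction, where $c(x_i\wedge y_i\wedge[\gamma])=[\gamma]-\alpha_ix_i-\beta_iy_i$ sums over $i$ to $(g-1)[\gamma]$; that is, it reflects the Harris--Pulte-type relation between the fundamental-group extension $\mc{E}_{\text{cer}}$ and the cycle-theoretic extension, which is exactly the input your approach bypasses. Two honest conclusions are available to you: (i) your argument does suffice for the final clause of the proposition, since faithfulness only requires a nonzero multiplier, and your remark to that effect is correct; (ii) your computation places the multiplier at $\pm(2g-2)$, which is in genuine tension with the paper's $(g-1)$; resolving that tension requires deciding whether $j^*\mc{E}_0$ really is the pushout $c_*\mc{E}_{\text{cer}}$ or instead has twice its extension class, and it cannot be made to disappear by bookkeeping of twists and dualities.
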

The proof of Proposition \ref{Proposition: key monodromy computation} goes through the Johnson homomorphism and since it requires us to go on a tangent, we defer the proof to the Appendix.

\begin{proof}[Proof of Theorem \ref{Theorem: injectivity for Pic^1}]
It remains to write down local systems $(\mc{E}_i)_{i\geq 1}$ whose restrictions to $\Pic^1(C)$ are of finite monodromy and are jointly faithful. Let $\rho\colon \pi_1(\Pic^1_{\mc{C}_g/\mc{M}_g})\to \GL_r(\C)$ be the monodromy representation associated to $\mc{E}_0$. Consider all finite quotients of the image of $\rho$. As every finite group is linear, we may pick faithful representations of these finite quotients and hence we get local systems $\mc{E}_i$ on $\Pic^1_{\mc{C}_g/\mc{M}_g}$ of finite monodromy, and hence underlie graded-polarizable, admissible $\Q$-variations of mixed Hodge structure on $\Pic^1_{\mc{C}_g/\mc{M}_g}$. Consider the restriction of $\mc{E}_i$ to the fiber $\Pic^1(C)$. The restrictions are certainly of finite monodromy, and it remains to show that they are jointly faithful. Since we showed that the restriction of $\rho$ to $\pi_1(\Pic^1(C))$ is faithful, we may view it as a subgroup of the image of $\rho$. Since $\rho(\pi_1(\Pic^1_{\mc{C}_g/\mc{M}_g}))$ is a finitely generated linear group, it's residually finite \cite{Mal40}, and hence for any $g\in \pi_1(\Pic^1(C))$, there's some finite quotient of $\rho(\pi_1(\Pic^1_{\mc{C}_g/\mc{M}_g}))$ for which $g$ remains non-zero. Hence, the $\mc{E}_i|_{\Pic^1(C)}$'s are jointly faithful as desired.
\end{proof}
\begin{remark}
	Note that since $\pi_1(\mc{M}_g)$ is residually finite \cite[Theorem 6.11]{FM12}, this argument also shows that the topological fundamental group $\pi_1(\Pic^1_{\mc{C}_g/\mc{M}_g})$ of $\Pic^1_{\mc{C}_g/\mc{M}_g}$ is residually finite. In fact, suppose $f\colon X\to B$ is any smooth projective family of curves over some base $B$ with $\pi_1(B)$ residually finite. Then the same argument shows that the topological fundamental group $\pi_1(\Pic^1_{X/B})$ is residually finite.
\end{remark}

Notice that in the example of $p\colon \Pic^1_{\mc{C}_g/\mc{M}_g}\to \mc{M}_g$, the group $\pi_1(\LS^{\text{Hdg}}(\Pic^1(C)))$ does not depend on the choice of base point $[C]$ in $\mc{M}_g$. Recall that in the abelian setting, the Hodge exceptional locus also has a group-theoretic characterization: it's the locus where the Mumford-Tate group of the fiber becomes smaller than the Mumford-Tate group of a general fiber. Therefore, it's reasonable to ask the following question 
\begin{question}
	Given a smooth projective families of algebraic varieties $f\colon X\to B$, can $NG(X/B)$ be characterized as the points $b$ of $B$ such that $\pi_1(\LS^\text{Hdg}(X_b))$ differs from that of a general fiber? 
\end{question}
If this question has an affirmative answer, then using Proposition \ref{Prop: Tannakian fundamental group for A.V.}, we can for example conclude that for any smooth projective family of abelian varieties $f\colon \mc{A}\to B$, the locus $NG(\mc{A}/B)$ is always empty.

\section{Hodge theoretic section question and relation to \'{e}tale fundamental groups}\label{section: obstruction to sections}
In this section, we continue the discussion of Hodge theoretic section question initiated in \cite{Xu24} (see also the thesis of Ferrario \cite{Fer20}). The main goals are twofold: we would like to modify the formulation of the Hodge theoretic section conjecture in \cite{Xu24}, and then we would like to explain in particular how \'{e}tale fundamental groups can obstruct existence of sections to exact sequences of Hodge theoretic fundamental groups. 

Let $f\colon X\to B$ be a smooth projective family of algebraic varieties. Any algebraic section $s\colon B\to X$ induces a splitting of the sequence $(\ref{Hodge SES})$. This induces a map $$\text{sec}\colon \{\text{algebraic sections to }f\colon X\to B\}\to \{\text{splittings of (\ref{Hodge SES})}\}/\backsim,$$ where the equivalence relation on the set of splittings is induced by isomorphism of functors via Tannakian duality. Inspired by Grothendieck's section conjecture, we can formulate the Hodge theoretic section question, which asks when the map $\text{sec}$ is a bijection.

Now in the anabelian philosophy (see \cite{Fal98} for an overview), group theoretic maps are not simply maps of \'{e}tale fundamental groups, but rather maps of short exact sequences. More precisely, given a smooth projective variety $X$ over some number field $k$, we have the associated sequence of \'{e}tale fundamental groups $$1\to \pi_1^{\text{\'{e}t}}(X_{\overline{k}})\to \pi_1^{\text{\'{e}t}}(X)\to \Gal(\overline{k}/k)\to 1$$ where $X_{\overline{k}}$ is the base change of $X$ to a fixed separable closure $\overline{k}$ over $k$, and the group theoretic maps in anabelian geometry really are maps between this fundamental short exact sequences up to conjugation by the geometric fundamental group.

Going back to the Hodge theoretic world, we may view the short exact sequence proved by Andr\'{e} \cite[Theorem C. 11]{And21} as a Hodge theoretic analogue of the fundamental exact sequence of \'{e}tale fundamental groups (see also \cite[Introduction]{Ara10}). This motivates the following definition:
\begin{definition}
	Suppose $X$ and $Y$ are two smooth connected complex algebraic varieties. Then a map $f\colon \pi_1(\VMHS(X))\to \pi_1(\VMHS(Y))$ is called \textit{Hodge theoretic} if it induces a map of short exact sequences % https://q.uiver.app/#q=WzAsMTAsWzAsMCwiMSJdLFswLDEsIjEiXSxbMSwwLCJcXHBpXzEoXFxMU157XFx0ZXh0e0hkZ319KFgpKSJdLFsxLDEsIlxccGlfMShcXExTXntcXHRleHR7SGRnfX0oWSkpIl0sWzIsMSwiXFxwaV8xKFxcVk1IUyhZKSkiXSxbMiwwLCJcXHBpXzEoXFxWTUhTKFgpKSJdLFszLDAsIlxccGlfMShcXE1IUykiXSxbMywxLCJcXHBpXzEoXFxNSFMpIl0sWzQsMCwiMSJdLFs0LDEsIjEiXSxbMCwyXSxbMSwzXSxbMiwzXSxbMyw0XSxbMiw1XSxbNSw2XSxbNCw3XSxbNiw3XSxbNSw0XSxbNiw4XSxbNyw5XV0=
\[\begin{tikzcd}
	1 & {\pi_1(\LS^{\text{Hdg}}(X))} & {\pi_1(\VMHS(X))} & {\pi_1(\MHS)} & 1 \\
	1 & {\pi_1(\LS^{\text{Hdg}}(Y))} & {\pi_1(\VMHS(Y))} & {\pi_1(\MHS)} & 1
	\arrow[from=1-1, to=1-2]
	\arrow[from=1-2, to=1-3]
	\arrow[from=1-2, to=2-2]
	\arrow[from=1-3, to=1-4]
	\arrow[from=1-3, to=2-3]
	\arrow[from=1-4, to=1-5]
	\arrow[from=1-4, to=2-4]
	\arrow[from=2-1, to=2-2]
	\arrow[from=2-2, to=2-3]
	\arrow[from=2-3, to=2-4]
	\arrow[from=2-4, to=2-5]
\end{tikzcd}\]
\end{definition}
Note that any algebraic morphism between $X$ and $Y$ induces Hodge theoretic maps between their Hodge theoretic fundamental groups. Then we may modify the definition of the section map: instead of going into the set of group theoretic splittings of (\ref{Hodge SES}), we asks that it lands in the set of Hodge theoretic splittings:$$\sec_{\text{Hdg}}\colon \{\text{algebraic sections to }f\colon X\to B\}\to \{\text{Hodge theoretic splittings of }(\ref{Hodge SES})\}/\backsim,$$ where the equivalence relation is given by conjugation by $\pi_1(\LS^{\text{Hdg}}(Y))$. 
\begin{question}[Hodge theoretic section question]
	When is $\text{sec}_{\text{Hdg}}$ a bijection?
\end{question}
Perhaps the simplest way of verifying that the map $\text{sec}_{\text{Hdg}}$ is a bijection is to show that the sequence (\ref{Hodge SES}) does not admit any Hodge theoretic splitting. We show that sometimes it's enough to check at the level of \'{e}tale fundamental groups:
\begin{theorem}\label{Theorem: Hodge theoretic v.s. etale}
	Let $f\colon X\to B$ be a smooth projective map between smooth connected complex \textit{quasi-projective} varieties. If $\pi_1(\VMHS(X))\to \pi_1(\VMHS(B))$ admits a Hodge theoretic splitting, then the induced map of \'{e}tale fundamental groups $\pi_1^{\text{\'{e}t}}(X)\to \pi_1^{\text{\'{e}t}}(B)$ also splits.
\end{theorem}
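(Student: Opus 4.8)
The plan is to produce, functorially in the variety, a faithfully flat surjection from the Hodge theoretic fundamental group onto the \'{e}tale fundamental group, and then to push the given section through these surjections. First I would single out inside $\VMHS(X)$ the full subcategory $\LS^{\text{fin}}(X)$ of local systems of finite monodromy, each equipped with its canonical polarizable weight-$0$ variation of Hodge structure of type $(0,0)$ (a finite monodromy group is unitarizable and, by the usual averaging-a-lattice argument, preserves a $\Z$-lattice, so these objects genuinely lie in $\VMHS(X)$). This subcategory is stable under subquotients, tensor products and duals, it is equivalent to the category of finite-image $\Q$-representations of $\pi_1(X)$, and hence its Tannakian fundamental group is $\pi_1^{\text{\'{e}t}}(X)=\widehat{\pi_1(X)}$, viewed as a pro-finite (constant) group scheme over $\Q$. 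Since $\LS^{\text{fin}}(X)$ is stable under subobjects in $\VMHS(X)$, the inclusion yields a faithfully flat $q_X\colon\pi_1(\VMHS(X))\twoheadrightarrow\pi_1^{\text{\'{e}t}}(X)$, and likewise $q_B$; because $f^*$ carries finite local systems to finite local systems, the square
\[
\begin{tikzcd}
\pi_1(\VMHS(X))\arrow[r,"f_*"]\arrow[d,"q_X"'] & \pi_1(\VMHS(B))\arrow[d,"q_B"]\\
\pi_1^{\text{\'{e}t}}(X)\arrow[r,"f_*^{\text{\'{e}t}}"'] & \pi_1^{\text{\'{e}t}}(B)
\end{tikzcd}
\]
commutes, the bottom arrow being the map $f_*^{\text{\'{e}t}}$ induced by $f$.

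The crux, and the step I expect to be the main obstacle, is to check that the tensor functor $s^*\colon\VMHS(X)\to\VMHS(B)$ dual to the section $s$ carries $\LS^{\text{fin}}(X)$ into $\LS^{\text{fin}}(B)$; equivalently, that $q_X\circ s$ factors through $q_B$. Fix $\mathbb{V}\in\LS^{\text{fin}}(X)$, so that the Tannakian subcategory $\langle\mathbb{V}\rangle$ it generates has finite fundamental group. The restriction $s^*\colon\langle\mathbb{V}\rangle\to\langle s^*\mathbb{V}\rangle$ is essentially surjective, so Proposition \ref{Prop: closed embedding of Tannaka groups} exhibits $\pi_1(\langle s^*\mathbb{V}\rangle)$ as a closed subgroup of that finite group, hence finite. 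It then remains to rule out a nontrivial Hodge structure on $s^*\mathbb{V}$: applying the same proposition to the fibre functor $\VMHS(B)\to\MHS$, $\mathbb{W}\mapsto\mathbb{W}_b$, exhibits $\MT((s^*\mathbb{V})_b)$ as a closed subgroup of the finite group $\pi_1(\langle s^*\mathbb{V}\rangle)$, hence finite; since Mumford--Tate groups of mixed Hodge structures are connected, this forces $\MT((s^*\mathbb{V})_b)=1$. Thus $s^*\mathbb{V}$ has finite monodromy and trivial Hodge structure, i.e.\ it is pure of type $(0,0)$ and lies in $\LS^{\text{fin}}(B)$, as wanted.

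Granting this, $q_X\circ s=\bar{s}\circ q_B$ for a unique homomorphism $\bar{s}\colon\pi_1^{\text{\'{e}t}}(B)\to\pi_1^{\text{\'{e}t}}(X)$, and the theorem follows by a diagram chase: from commutativity of the square and $f_*\circ s=\id$ one obtains
\[
f_*^{\text{\'{e}t}}\circ\bar{s}\circ q_B=f_*^{\text{\'{e}t}}\circ q_X\circ s=q_B\circ f_*\circ s=q_B,
\]
whence $f_*^{\text{\'{e}t}}\circ\bar{s}=\id$ since $q_B$ is an epimorphism, so $\bar{s}$ is the desired section. The quasi-projectivity and smoothness hypotheses enter only to guarantee that $f^*$ preserves admissible graded-polarizable variations (so that $f_*$ and the square make sense) and that the \'{e}tale-to-Tannakian dictionary above is valid. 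I would note finally that the section $s$ is never actually required to respect the projection to $\pi_1(\MHS)$: a purely group-theoretic section already suffices, the Hodge theoretic hypothesis being a harmless strengthening. The sole genuinely Hodge-theoretic input is the connectedness of Mumford--Tate groups, which is exactly what forbids a spurious finite Hodge structure on $s^*\mathbb{V}$.
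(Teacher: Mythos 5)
Your strategy is sound and genuinely different from the paper's. The paper uses the Hodge theoretic hypothesis to restrict the section to a splitting at the level of $\pi_1(\LS^{\text{Hdg}})$, dualizes to an additive tensor functor $\LS^{\text{Hdg}}(X)\to\LS^{\text{Hdg}}(B)$, and then shows that \emph{any} such functor preserves finite monodromy via the polynomial criterion of Nori and Biswas--Holla--Schumacher ($\mathbb{V}$ has finite monodromy if and only if $P(\mathbb{V})\cong Q(\mathbb{V})$ for some $P\neq Q\in\mathbb{N}[x]$); the quasi-projectivity hypothesis is needed precisely for that criterion. You instead realize $\pi_1^{\text{\'{e}t}}$ as the Tannaka group of the full subcategory of $\VMHS$ consisting of finite local systems with their canonical type $(0,0)$ structure, and prove preservation under $s^*$ by Tannakian formalism alone: Proposition \ref{Prop: closed embedding of Tannaka groups} forces $\pi_1(\langle s^*\mathbb{V}\rangle)$ to be finite, and connectedness of Mumford--Tate groups of mixed Hodge structures kills the Hodge structure. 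What your route buys: it needs neither Nori's criterion nor quasi-projectivity, and it makes explicit that the section need not be Hodge theoretic --- points the paper only records in the remark closing \S\ref{section: obstruction to sections}, via Andr\'{e}'s identification $\pi_1^{\text{\'{e}t}}(X)=\pi_0(\pi_1(\VMHS(X)))$; your argument is in effect a self-contained proof of the piece of that identification the theorem requires. (One slip of terminology: $s^*\colon\langle\mathbb{V}\rangle\to\langle s^*\mathbb{V}\rangle$ need not be essentially surjective, but Proposition \ref{Prop: closed embedding of Tannaka groups} only needs every object of the target to be a subquotient of an object in the image, which does hold.)

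There is, however, one genuine gap, at the step ``thus $s^*\mathbb{V}$ \dots is pure of type $(0,0)$ and lies in $\LS^{\text{fin}}(B)$''. Your Mumford--Tate argument establishes triviality of the mixed Hodge structure only on the fiber at the chosen base point $b$, whereas membership in your subcategory $\LS^{\text{fin}}(B)$ demands type $(0,0)$ at \emph{every} point of $B$: its objects carry the trivial weight and Hodge filtrations fiberwise. This is not automatic: a graded-polarizable variation with finite (even trivial) monodromy and trivial fiber at one point need not be the constant one in general --- what rules out a non-constant Hodge filtration over a trivialized local system is admissibility, via the theorem of the fixed part for admissible VMHS, which you never invoke. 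Fortunately the gap closes with your own tools: for any $b'\in B$, applying Proposition \ref{Prop: closed embedding of Tannaka groups} to the fiber functor at $b'$ embeds $\MT((s^*\mathbb{V})_{b'})$ into the Tannaka group of $\langle s^*\mathbb{V}\rangle$ computed with respect to that fiber functor, which is a form of the finite group scheme you produced at $b$ and hence itself finite; connectedness again gives $\MT((s^*\mathbb{V})_{b'})=1$. Once all fibers are of type $(0,0)$, both filtrations are the trivial flat ones, so $s^*\mathbb{V}$ is literally the canonical object on its finite underlying local system, and your factorization $q_X\circ s=\bar{s}\circ q_B$ together with the concluding diagram chase is then correct.
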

\begin{proof}
	Suppose that we have a Hodge theoretic section $s\colon \pi_1(\VMHS(B))\to \pi_1(\VMHS(X))$. By definition, $s$ restricts to a splitting $s|_{\text{LS}}\colon \pi_1(\LS^{\text{Hdg}}(B))\to \pi_1(\LS^{\text{Hdg}}(X))$ of the map $\pi_1(\LS^{\text{Hdg}}(X))\to \pi_1(\LS^{\text{Hdg}}(B))$. Applying Tannakian duality, we see that at the categorical side, we get a functor $s^*|_{\text{LS}}\colon \LS^{\text{Hdg}}(X)\to \LS^{\text{Hdg}}(B)$ such that $s^*|_{\text{LS}}\circ f^*$ is isomorphic to the identity functor on $\LS^{\text{Hdg}}(B)$. 
	
	We claim that any additive tensor functor between $\LS^{\text{Hdg}}(X)$ and $\LS^\text{Hdg}(B)$ has to preserve local systems of finite monodromy. Assuming this claim is true, then $s^*|_{\text{LS}}$ further restricts to a functor from the category $\LS_{\text{fin}}(X)$ of local systems with finite monodromy on $X$ to $\LS_{\text{fin}}(B)$, such that precomposing with $f^*$ is isomorphic to the identity functor on $\LS_{\text{fin}}(B)$. This concludes the proof as the \'{e}tale fundamental group of a smooth connected complex algebraic variety $X$ can be identified with the Tannakian fundamental group $\pi_1(\LS_{\text{fin}}(X))$ (see \cite[Example 2.34]{DM82}).
\end{proof}
Therefore, it remains to verify the following lemma:
\begin{lemma}
	Let $X$ and $Y$ be two smooth connected complex quasi-projective varieties. Then any additive tensor functors between (sub)categories of local systems have to send local systems of finite monodromy to local systems of finite monodromy.
\end{lemma}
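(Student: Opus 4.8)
The plan is to reduce the lemma to the purely Tannakian fact that \emph{finiteness of monodromy is detected by the tensor structure alone}, using Nori's notion of a finite object. Recall that an object $\mathbb{V}$ of a rigid $\Q$-linear tensor category is called \emph{finite} if there exist distinct polynomials $f,g\in\mathbb{N}[x]$ with $f(\mathbb{V})\cong g(\mathbb{V})$, where for $f=\sum_i a_i x^i$ one sets $f(\mathbb{V})\defeq\bigoplus_i (\mathbb{V}^{\otimes i})^{\oplus a_i}$ (and likewise for $g$). The central claim I will establish is that, for a local system $\mathbb{V}$, being finite in this sense is \emph{equivalent} to having finite monodromy. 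Granting this, the lemma is immediate: if $F$ is an additive tensor functor, additivity gives $F(A\oplus B)\cong F(A)\oplus F(B)$ and the monoidal structure gives $F(\mathbb{V}^{\otimes i})\cong F(\mathbb{V})^{\otimes i}$, whence $F(f(\mathbb{V}))\cong f(F(\mathbb{V}))$ for every $f\in\mathbb{N}[x]$. Thus an isomorphism $f(\mathbb{V})\cong g(\mathbb{V})$ is transported to an isomorphism $f(F(\mathbb{V}))\cong g(F(\mathbb{V}))$, so $F$ preserves finite objects, i.e.\ preserves local systems of finite monodromy.

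It then remains to prove the equivalence. The easy direction is the following. If $\mathbb{V}$ has finite monodromy group $G$, then $\on{Rep}(G)$ is semisimple with only finitely many simple objects (we are in characteristic $0$), so its representation ring $R(G)$ has finite rank; among the classes $[\mathbb{V}^{\otimes i}]$, $i\geq 0$, there is therefore a nontrivial $\Z$-linear relation. Separating positive and negative coefficients produces $f\neq g$ in $\mathbb{N}[x]$ with $[f(\mathbb{V})]=[g(\mathbb{V})]$ in $R(G)$, and semisimplicity upgrades the equality of classes to an isomorphism $f(\mathbb{V})\cong g(\mathbb{V})$. Hence $\mathbb{V}$ is finite.

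The converse is the step I expect to be the main obstacle. Suppose $f(\mathbb{V})\cong g(\mathbb{V})$ with $f\neq g$, and let $M$ denote the monodromy of a loop $\gamma$. Using $\on{tr}(M\mid\mathbb{V}^{\otimes i})=\on{tr}(M)^i$ and additivity of traces, the isomorphism forces $\on{tr}(M^k)$ to be a root of the nonzero polynomial $h\defeq f-g$ for every $k\in\Z$; thus the traces of all powers of $M$ (and of $M^{-1}$) take only finitely many values, which confines the eigenvalues of $M$ to the unit circle. Since the local systems here carry an integral structure, these eigenvalues are algebraic integers whose Galois conjugates are again eigenvalues, so Kronecker's theorem makes them roots of unity; and the relation $f(\mathbb{V})\cong g(\mathbb{V})$ excludes a nontrivial unipotent part, since on the $\mathbb{G}_a$ it would generate the tensor powers of $\mathbb{V}$ produce Jordan blocks of unboundedly growing size, obstructing any relation between distinct $f,g$. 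Consequently every $M$ is semisimple with root-of-unity eigenvalues, hence of finite order, so the finitely generated monodromy group is torsion and linear and therefore finite by Schur's theorem. (This equivalence is exactly Nori's finiteness criterion, so one may alternatively just cite it and skip the trace computation.)
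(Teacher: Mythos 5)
Your core reduction is exactly the paper's: the lemma follows from Nori's tensor-theoretic characterization of finite monodromy --- $\mathbb{V}$ has finite monodromy if and only if $P(\mathbb{V})\cong Q(\mathbb{V})$ for some $P\neq Q\in\mathbb{N}[x]$ --- together with the observation that such a relation is visibly preserved by any additive tensor functor. The paper's proof consists of precisely this observation plus citations (Nori \cite{Nor76} for curves, Biswas--Holla--Schumacher \cite{BHS00} for the quasi-projective case), so your closing parenthetical (``one may just cite it'') reproduces the paper's argument. What you add is a self-contained proof of the criterion. Your easy direction (finite rank of the representation ring of a finite group, plus semisimplicity in characteristic zero to upgrade equality of classes to an isomorphism) is correct and standard. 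Your hard direction trades the geometry of Nori/BHS for the integral structure that the paper's categories $\LS^{\text{Hdg}}$ carry: this makes the argument more elementary but strictly less general than the cited criterion, which holds for local systems with no integral structure; since every object in play here is a $\Q$-local system defined over $\Z$, the loss of generality is harmless for the application.

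Two steps of that hard direction are under-justified as written. First, the passage from ``$\on{tr}(M^k)$ takes finitely many values for all $k\in\Z$'' to ``the eigenvalues of $M$ lie on the unit circle'' needs an argument: e.g.\ note that boundedness of the power sums forces $\sum_{k\geq 1}\on{tr}(M^k)t^k=\sum_j \lambda_j t/(1-\lambda_j t)$ to be holomorphic on $|t|<1$, and the poles at $t=1/\lambda_j$ cannot cancel (residues at a common pole add), so $|\lambda_j|\leq 1$; applying the same to $M^{-1}$ pins $|\lambda_j|=1$. Second, your exclusion of a nontrivial unipotent part is garbled and, as stated, incomplete: comparing maximal Jordan block sizes only distinguishes $f(\mathbb{V})$ from $g(\mathbb{V})$ when $\deg f\neq \deg g$, and says nothing when the degrees and leading coefficients agree. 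The fix is to first use Krull--Schmidt cancellation to strip the common monomials from $f$ and $g$; the resulting polynomials have disjoint supports, hence distinct degrees, and then the fact that a Jordan block of size $m\geq 2$ in the unipotent part of $M$ produces a block of size $(\deg P)(m-1)+1$ in $P(\mathbb{V})$ gives the contradiction. Finally, be explicit that the whole trace argument is applied to \emph{every} element of the monodromy group (every element is the monodromy of some loop), not merely to a generating set: a group generated by finite-order elements need not be torsion, and Schur's theorem on finitely generated linear groups requires the full torsion hypothesis. With those repairs your self-contained proof is sound; alternatively, citing \cite{Nor76} and \cite{BHS00} as you suggest is exactly what the paper does.
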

\begin{proof}
	This follows from Nori's criterion for a local system to be of finite monodromy: a local system $\mathbb{V}$ on a smooth connected quasi-projective variety $X$ is of finite monodromy if and only if there are two polynomials $P(x)\neq Q(x)\in \mathbb{N}[x]$ such that $P(\mathbb{V})=Q(\mathbb{V})$, where multiplication of local system is interpreted as tensor products and addition is interpreted as direct sum. This is first proven by Nori in the case where $X$ is a curve \cite[Proposition 3.10]{Nor76}, and is later generalized to the case where $X$ is a smooth connected quasi-projective variety by the work of Biswas-Holla-Schumacher \cite[Theorem 3.3]{BHS00}.
\end{proof}
\begin{remark}
	This criterion also works for $\mc{M}_g$ and $\Pic^1_{\mc{C}_g/\mc{M}_g}$ as one can check if a local system is of finite monodromy by pulling back this local system to a finite \'{e}tale cover, and both $\mc{M}_g$ and $\Pic^1_{\mc{C}_g/\mc{M}_g}$ admit finite \'{e}tale covering maps by smooth connected quasi-projective varieties.
\end{remark}
\begin{corollary}\label{Corollary: Hodge theoretic section question for M_g}
	Fix some $g\geq 3$. Let $U$ be any open subset of $\mc{M}_g$ and $f^{-1}(U)$ its preimage in $\mc{C}_g$ and $p^{-1}(U)$ its preimage in $\Pic^1_{\mc{C}_g/\mc{M}_g}$. Then the maps $$\pi_1(\VMHS(p^{-1}(U)))\to \pi_1(\VMHS(U)),\quad\quad\pi_1(\VMHS(f^{-1}(U)))\to \pi_1(\VMHS(U))$$ do not admit any Hodge theoretic sections, and the section maps $\text{sec}_{\text{Hdg}}$ are bijections in these two cases. In particular, taking $U$ to be $\mc{M}_g$ recovers the second part of Theorem \ref{Main theorem}.
\end{corollary}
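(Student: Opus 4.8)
The plan is to use Theorem~\ref{Theorem: Hodge theoretic v.s. etale} to convert the non-existence of Hodge theoretic sections into the non-splitting of a single arithmetic fundamental exact sequence, and then to obstruct that splitting using the Franchetta property of the generic curve. Since $f^{-1}(U)\to U$ and $p^{-1}(U)\to U$ are smooth projective, a Hodge theoretic splitting of either $\pi_1(\VMHS(f^{-1}(U)))\to\pi_1(\VMHS(U))$ or $\pi_1(\VMHS(p^{-1}(U)))\to\pi_1(\VMHS(U))$ would, by Theorem~\ref{Theorem: Hodge theoretic v.s. etale}, yield a splitting of the associated map of \'etale fundamental groups over $U$; the stackiness of $\mc{M}_g$, $\mc{C}_g$ and $\Pic^1_{\mc{C}_g/\mc{M}_g}$ is handled exactly as in the remark preceding this corollary, by passing to the finite \'etale covers by smooth quasi-projective varieties. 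Thus it suffices to prove that these \'etale sequences do not split.

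Two reductions make this tractable. First, the universal Abel--Jacobi map $\mathrm{AJ}\colon \mc{C}_g\to\Pic^1_{\mc{C}_g/\mc{M}_g}$, $x\mapsto[\mc{O}(x)]$, is a morphism over $\mc{M}_g$ restricting to $f^{-1}(U)\to p^{-1}(U)$, so composing a section of $\pi_1^{\text{\'{e}t}}(f^{-1}(U))\to\pi_1^{\text{\'{e}t}}(U)$ with $\mathrm{AJ}_*$ produces a section of $\pi_1^{\text{\'{e}t}}(p^{-1}(U))\to\pi_1^{\text{\'{e}t}}(U)$; hence non-splitting in the $\Pic^1$ case implies non-splitting in the $\mc{C}_g$ case, and it is enough to treat $\Pic^1$. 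Second, every open $U\subseteq\mc{M}_g$ is dense, so shares the generic point $\eta=\Spec K$ with $K=\C(\mc{M}_g)$; restricting a putative section along $\Spec K\to U$ and base-changing the fundamental exact sequence to $\eta$ reduces us to one $U$-independent assertion: the arithmetic fundamental exact sequence of the torsor $\Pic^1(C_\eta)$ under $A:=\Jac(C_\eta)$ over $K$ admits no Galois section.

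To obstruct such a section I would track the torsor class $[\Pic^1(C_\eta)]\in H^1(K,A)$. By the Franchetta conjecture $\Pic(C_\eta)=\Z\cdot\omega$ with $\deg\omega=2g-2$, so $\Pic^d(C_\eta)$ has a $K$-point precisely when $(2g-2)\mid d$; therefore $d\mapsto[\Pic^d(C_\eta)]$ exhibits $[\Pic^1(C_\eta)]$ as an element of exact order $2g-2\geq 2$, in particular nonzero. A Galois section reduces, modulo each prime $\ell$, to a splitting of the finite extension of $\Gal(\bar K/K)$ by $A[\ell]$ obtained from the sequence, whose class is the image $\delta_\ell\big([\Pic^1(C_\eta)]\big)\in H^2(K,A[\ell])$ of the connecting map of the Kummer sequence $0\to A[\ell]\to A\xrightarrow{\ell}A\to 0$. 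Since $\delta_\ell$ vanishes exactly on $\ell H^1(K,A)$, a section forces $[\Pic^1(C_\eta)]$ to be divisible by every $\ell$, and the task becomes to show that this order-$(2g-2)$ class is \emph{not} divisible by some prime $\ell\mid 2g-2$. I expect this non-divisibility to be the main obstacle: its nonvanishing is not formal and requires genuine control of $H^1(K,A)$ for $K=\C(\mc{M}_g)$, which I would approach either through the $\ell$-adic Kummer theory sketched above or, equivalently, by identifying the class with an explicit nonzero element of the mapping class group cohomology $H^2(\Gamma_g,H_1(C;\Z/\ell))$ (using goodness of $\Gamma_g$ to compare the \'etale and discrete cohomology).

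Granting this, Theorem~\ref{Theorem: Hodge theoretic v.s. etale} shows that neither $\pi_1(\VMHS(p^{-1}(U)))\to\pi_1(\VMHS(U))$ nor $\pi_1(\VMHS(f^{-1}(U)))\to\pi_1(\VMHS(U))$ admits a Hodge theoretic section. As any algebraic section of $p^{-1}(U)\to U$ or $f^{-1}(U)\to U$ would induce such a Hodge theoretic splitting, the source of $\text{sec}_{\text{Hdg}}$ is also empty, so $\text{sec}_{\text{Hdg}}$ is trivially a bijection in both cases; taking $U=\mc{M}_g$ recovers part~$(2)$ of Theorem~\ref{Main theorem}.
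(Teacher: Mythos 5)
Your overall architecture matches the paper's: you invoke Theorem~\ref{Theorem: Hodge theoretic v.s. etale} (with the stackiness handled by passing to finite \'etale covers, as in the remark) to reduce the non-existence of Hodge theoretic sections to non-splitting of the corresponding maps of \'etale fundamental groups, and you correctly observe that emptiness of the set of Hodge theoretic splittings forces $\text{sec}_{\text{Hdg}}$ to be a trivial bijection. Your Abel--Jacobi reduction from the $\mc{C}_g$ case to the $\Pic^1$ case is a nice touch not present in the paper. But at exactly the point where the paper closes the argument by citation --- ``the associated maps of \'etale fundamental groups do not split when the base is the generic point of $\mc{M}_g$'' (Hain; Li--Litt--Salter--Srinivasan) --- your proposal stops short. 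You reduce the \'etale non-splitting to the assertion that the torsor class $[\Pic^1(C_\eta)]\in H^1(K,\Jac(C_\eta))$, which has exact order $2g-2$ by weak Franchetta, is not divisible by some prime $\ell\mid 2g-2$, and you then concede that this non-divisibility ``is not formal.'' That concession is the gap, and it is not a minor one: having exact order $2g-2$ gives nothing here, since a torsion class can perfectly well be infinitely divisible (in $\Q/\Z$ every element is), and ruling this out requires genuine control of $H^1(K,A)$ for $K=\C(\mc{M}_g)$. This non-divisibility (equivalently, the non-vanishing of the mod-$\ell$ obstruction class in $H^2$) is precisely the hard content of the results the paper cites, so your proposal has in effect re-derived the reduction those papers start from, not a proof of the corollary.

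Two smaller points. First, your passage from a section of $\pi_1^{\text{\'et}}(p^{-1}(U))\to\pi_1^{\text{\'et}}(U)$ to a Galois section of the arithmetic fundamental exact sequence of the generic fiber silently identifies that sequence with the pullback of the sequence over $U$ along $\Gal(\overline{K}/K)\to\pi_1^{\text{\'et}}(U)$; this identification requires injectivity of $\pi_1^{\text{\'et}}$ of the geometric fiber into $\pi_1^{\text{\'et}}$ of the total space (an Asada-type theorem for $\mc{C}_g$, and a separate argument for $\Pic^1$), which you should at least flag. Second, the order computation itself needs the weak Franchetta statement about $K$-points of the relative Picard scheme (Mestrano, Kouvidakis), not just the statement that $\Pic(C_\eta)=\Z\cdot\omega$, since $K$-points of $\Pic^d_{C_\eta/K}$ need not come from line bundles on $C_\eta$.
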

\begin{proof}
	This follows from the fact that the associated maps of \'{e}tale fundamental groups do not split when the base is the generic point of $\mc{M}_g$ (see \cite[Theorem 2]{Hai11} and \cite[Corollary 1.3.3. and Corollary 1.3.4]{LLSS}).
\end{proof}
\begin{remark}
	Andr\'{e} observed in \cite[Corollary C.14]{And21} that given a smooth connected complex algebraic variety $X$, the \'{e}tale fundamental group $\pi_1^{\text{\'{e}t}}(X)$ can be identified with $\pi_0(\pi_1(\VMHS(X)))=\pi_0(\pi_1(\LS^{\text{Hdg}}(X)))$. In particular, Theorem \ref{Theorem: Hodge theoretic v.s. etale} still holds even if we don't impose the condition that the section is Hodge theoretic. It's then natural to ask if every group theoretic section is automatically Hodge theoretic. While we do not believe it's the case, we don't currently have a counterexample.
\end{remark}

\section{Appendix: Proof of Proposition \ref{Proposition: key monodromy computation}}
All the ingredients needed for a proof of Proposition \ref{Proposition: key monodromy computation} are contained in \cite{HM05}, although they did not do this monodromy computation explicitly. They were mostly concerned with computing the monodromy a related local system, which we will call $\mc{E}_\text{cer}$, on $\mc{M}_{g,1}$ and we explain how to compute the monodromy of $j^*\mc{E}_0$ by using their results and some standard facts about the Johnson homomorphism (see \cite[section 6.6]{FM12}).

We first set up some notations. Fix some integer $g\geq 2$. Let $f\colon \Pic^0_{\mc{C}_{g,1}/\mc{M}_{g,1}}\to \mc{M}_{g,1}$ be the relative Jacobian of the universal curve $\mc{C}_{g,1}$ of genus $g$ with $1$ marked point and $[C,x]\in \mc{M}_{g,1}$ some base point. Let $\xi\colon \mc{M}_{g,1}\to \mc{C}_{g,1}$ be the canonical section. Define two local systems on $\mc{M}_{g,1}$: $$\mathbb{L}_\Z:=R^{2g-3}f_*\Z(g-1)\quad\quad \mathbb{H}_{\Z}:=R^{2g-1}f_*\Z(g).$$ Note that $\mathbb{H}_\Z$ is the local system corresponding to the action of $\pi_1(\mc{M}_{g,1},[C,x])$ on $H:=H^1(C,\Z(1))$ and $\mathbb{L}_\Z$ is the local system corresponding to the action of $\pi_1(\mc{M}_{g,1},[C,x])$ on $L:=[\wedge^3H](-1)$. There's a natural projection map $c\colon L\to H$ defined by $$
	c(x\wedge y\wedge z)=	\omega(x,y)z+\omega(y,z)x+\omega(z,x)y
$$ where $\omega$ is the intersection form. 

The precise definition of the local system $\mc{E}_{\text{cer}}$ is not relevant to our story. The key is that it's an extension of the form $$0\to \mathbb{L}_\Z\to \mc{E}_{\text{cer}}\to \underline{\Z(0)}\to 0.$$ More importantly, by \cite[Corollary 6.7]{Pul88}, we know that the map $c$ is equivariant with respect to the action of $\pi_1(\mc{M}_{g,1})$ and hence induces a commutative diagram: % https://q.uiver.app/#q=WzAsMTAsWzAsMCwiMCJdLFsxLDAsIlxcbWF0aGJie0x9X1xcWiJdLFswLDEsIjAiXSxbMSwxLCJcXG1hdGhiYntIfV9cXFoiXSxbMiwxLCJcXG1je0V9XzAiXSxbMywxLCJcXHVuZGVybGluZXtcXFooMCl9Il0sWzQsMSwiMCJdLFsyLDAsIlxcbWN7RX1fe1xcdGV4dHtjZXJ9fSJdLFszLDAsIlxcdW5kZXJsaW5le1xcWigwKX0iXSxbNCwwLCIwIl0sWzIsM10sWzAsMV0sWzEsMywiIiwxLHsic3R5bGUiOnsiaGVhZCI6eyJuYW1lIjoiZXBpIn19fV0sWzMsNF0sWzQsNV0sWzUsNl0sWzEsN10sWzcsOF0sWzcsNF0sWzgsNV0sWzgsOV1d
\[\begin{tikzcd}
	0 & {\mathbb{L}_\Z} & {\mc{E}_{\text{cer}}} & {\underline{\Z(0)}} & 0 \\
	0 & {\mathbb{H}_\Z} & {j^*\mc{E}_0} & {\underline{\Z(0)}} & 0
	\arrow[from=1-1, to=1-2]
	\arrow[from=1-2, to=1-3]
	\arrow[two heads, from=1-2, to=2-2]
	\arrow[from=1-3, to=1-4]
	\arrow[from=1-3, to=2-3]
	\arrow[from=1-4, to=1-5]
	\arrow[from=1-4, to=2-4]
	\arrow[from=2-1, to=2-2]
	\arrow[from=2-2, to=2-3]
	\arrow[from=2-3, to=2-4]
	\arrow[from=2-4, to=2-5]
\end{tikzcd}\] In particular, the monodromy representation associated to $(j^*\mc{E}_0)|_C$ is determined by sending a loop $\gamma\in \pi_1(C,x)$ to a map of the form $$\Z\to L\xrightarrow{c} H.$$ Fix such a loop $\gamma$. Notice that $\pi_1(C,x)$ must be contained in the Torelli subgroup $T_{g,1}$ of $\pi_1(\mc{M}_{g,1})$ as it's already trivial under the projection onto $\pi_1(\mc{M}_g)$. The key result of Hain and Matsumoto is the following 
\begin{proposition}[Proposition 6.4 in \cite{HM05}]
	The monodromy of $\mc{E}_{\text{cer}}$ around $\gamma$ is given by the map $\gamma\mapsto \tau([\gamma])$, where $\tau$ is the Johnson homomorphism $\tau\colon T_{g,1}\to L$, and $[\gamma]$ is the homology class of $\gamma$ in $H_1(C,\Z)$.
\end{proposition}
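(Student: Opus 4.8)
The plan is to compute the monodromy of $\mc{E}_{\text{cer}}$ first as an abstract homomorphism and then identify it with the Johnson homomorphism by passing to the second nilpotent quotient of the surface group. First I would note that $\mathbb{L}_\Z$ has monodromy factoring through the symplectic group (it is built from $H$), while $\underline{\Z(0)}$ is constant; hence both the sub and the quotient of
\[ 0 \to \mathbb{L}_\Z \to \mc{E}_{\text{cer}} \to \underline{\Z(0)} \to 0 \]
carry the \emph{trivial} action of the Torelli group $T_{g,1}$. Consequently the monodromy of $\mc{E}_{\text{cer}}$ restricted to $T_{g,1}$ is unipotent with trivial graded action, so it is classified by an element of $H^1(T_{g,1};L) = \Hom(T_{g,1},L)$, i.e. an honest group homomorphism $\rho_{\text{cer}}\colon T_{g,1}\to L$. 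The proposition asserts that $\rho_{\text{cer}}$ coincides with the first Johnson homomorphism $\tau$; since $\pi_1(C,x)\hookrightarrow T_{g,1}$ is the point-pushing (Birman) subgroup, this is exactly the claim $\rho_{\text{cer}}(\gamma)=\tau([\gamma])$.

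The key step is to realize the underlying local system of $\mc{E}_{\text{cer}}$ in terms of the truncated group algebra of the fibers. I would identify it with a weight-graded subquotient of the variation $\Q[\pi_1(C,x)]/J^3$ of level-two unipotent completions (whose mixed Hodge structure is furnished by Hain's work), whose weight-graded pieces are $\Z(0)$, $H$, and a piece containing $L$ through the contraction $c$. Under this identification the monodromy of $\mc{E}_{\text{cer}}$ is computed by the $T_{g,1}$-action on $\Q[\pi_1]/J^3$, equivalently on the two-step nilpotent quotient $N/\Gamma^3 N$ of $N=\pi_1(C,x)$. This is precisely the data recorded by the Johnson homomorphism: for $\phi\in T_{g,1}$ acting trivially on $H = N/\Gamma^2 N$, the assignment $w\mapsto \phi_*(w)\,w^{-1}$ maps $N/\Gamma^2 N$ into $\Gamma^2 N/\Gamma^3 N$ and thereby defines $\tau(\phi)\in\Hom(H,\wedge^2 H)$, whose image is identified with $L$ via the symplectic form. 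Matching $\rho_{\text{cer}}$ with $\tau$ then reduces to a cocycle computation on a symplectic basis, checking agreement on standard generators; for the point-pushing elements one uses the explicit formula $\tau(\mathrm{Push}(\gamma))=[\gamma]\wedge\omega_0$, where $\omega_0=\sum_i a_i\wedge b_i\in\wedge^2 H$ is the symplectic element.

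The main obstacle I anticipate is conceptual rather than computational: one must bridge the \emph{geometric} definition of $\mc{E}_{\text{cer}}$ (coming from relative cycle classes, i.e. a Ceresa-type normal function) and the \emph{topological} object $\Q[\pi_1]/J^3$ whose monodromy is visibly $\tau$, showing the two extensions agree as variations of mixed Hodge structure. Once this comparison is in place, the remaining difficulty is bookkeeping: pinning down the Poincaré--Lefschetz isomorphism $\wedge^{2g-3}H\cong(\wedge^3 H)(-1)$ defining $\mathbb{L}_\Z$, the contraction $c$, and the Tate twists in $L=[\wedge^3 H](-1)$ carefully enough that $\rho_{\text{cer}}=\tau$ holds on the nose, with no spurious multiple or dualization. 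As a consistency check, contracting the point-push formula gives $c([\gamma]\wedge\omega_0)=g[\gamma]-[\gamma]=(g-1)[\gamma]$, which is exactly the factor appearing in Proposition \ref{Proposition: key monodromy computation}; this both fixes the normalization of $\tau$ and explains the coefficient $g-1$ in the monodromy of $j^*\mc{E}_0$.
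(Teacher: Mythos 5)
The first thing to say is that the paper contains no proof of this statement at all: it is imported verbatim as Proposition 6.4 of Hain--Matsumoto \cite{HM05}, and the appendix only combines it with Pulte's equivariance result \cite{Pul88} to deduce Proposition \ref{Proposition: key monodromy computation}. So your proposal is not paralleling an argument in the paper; it is an attempt to reprove the cited input. Your opening reduction is correct and standard: since $T_{g,1}$ acts trivially on both $L$ and $\Z(0)$, the restriction of the monodromy of $\mc{E}_{\text{cer}}$ to $T_{g,1}$ is an honest homomorphism $\rho_{\text{cer}}\in H^1(T_{g,1};L)=\Hom(T_{g,1},L)$. Note also that your closing ``consistency check'' $c([\gamma]\wedge\omega_0)=(g-1)[\gamma]$ is correct but is not part of this proposition --- it is exactly the paper's own proof of Proposition \ref{Proposition: key monodromy computation}, which lives downstream of the cited result.

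The genuine gap is the step you yourself label ``the main obstacle'': identifying the geometrically defined $\mc{E}_{\text{cer}}$ (a Ceresa-type extension built from relative cycle classes in the Jacobian family) with a subquotient of the variation $\Q[\pi_1(C,x)]/J^3$, as an extension of variations of mixed Hodge structure equivariant for the mapping class group. This is not ``conceptual bookkeeping'' to be deferred; it is the entire mathematical content of the proposition. In the literature it is precisely Pulte's theorem (the extension class attached to $J/J^3$ is the Abel--Jacobi/harmonic-volume class of $C_x-C_x^-$) together with Hain's construction of the MHS on truncated group rings, and your sketch neither proves it nor cites it in a usable form. Without that identification, your argument shows only that $\rho_{\text{cer}}$ and $\tau$ are both elements of $\Hom(T_{g,1},L)$; nothing forces them to coincide, and even an equivariance argument cannot pin $\rho_{\text{cer}}$ down to $\tau$ for free, since $\wedge^3 H$ is not $\Sp$-irreducible, so $\Hom_{\Sp}(\wedge^3 H_{\Q},\wedge^3 H_{\Q})$ is more than one-dimensional. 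A secondary inaccuracy: the weight-graded pieces of $\Q[\pi_1(C,x)]/J^3$ are $\Q(0)$, $H$, and $J^2/J^3\cong H^{\otimes 2}/\langle\omega_0\rangle$; the module $L=[\wedge^3 H](-1)$ is not contained in any of these, so producing an extension of $\Z(0)$ by $L$ from this variation requires a dualization and a push-out along a contraction map that your outline never specifies. As written, the proposal reduces the cited proposition to an equally deep, unproved statement; to turn it into a proof you would have to carry out (or precisely invoke) the Pulte--Hain comparison at that step, or else simply treat the proposition as the external input that the paper takes it to be.
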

Proposition \ref{Proposition: key monodromy computation} now follows from a direct computation: 
\begin{proof}[Proof of Proposition \ref{Proposition: key monodromy computation}]
	It's enough to show that the composition $$\pi_1(C,x)\to T_{g,1}\xrightarrow{\tau}L\xrightarrow{c}H$$ is given by sending $\gamma$ to $(g-1)\cdot [\gamma]$. Now we know that the map $\pi_1(C,x)\to T_{g,1}\xrightarrow{\tau} L$ has the following explicit description (see e.g. \cite{FM12}): let $\{x_i, y_i\}_{i=1}^g$ be a symplectic basis for $H$ with respect to the intersection form $\omega$ and then this map is given by 
$$\begin{aligned}
	\pi_1(C,x)&\longrightarrow L\\
	\gamma &\mapsto\left(\sum x_i\wedge y_i\right)\wedge [\gamma]
\end{aligned}$$
Suppose $[\gamma]=\sum \alpha_i x_i+\beta_j y_i$. We may now compute that $$\begin{aligned}c\left(\left(\sum x_i\wedge y_i \right)\wedge [\gamma]\right)&=\sum_{i=1}^g c(x_i\wedge y_i\wedge [\gamma])\\&=\sum_{i=1}^g[\gamma]+\omega(y_i,[\gamma])x_i+\omega([\gamma],x_i)y_i\\&=\sum_{i=1}^g[\gamma]-\alpha_i x_i-\beta_i y_i\\&=(g-1)[\gamma]
\end{aligned}
$$ This concludes the proof of Proposition \ref{Proposition: key monodromy computation}.
\end{proof}

\bibliographystyle{alpha}
\bibliography{bibliography-hodgefundamentalgroup.bib}

\begin{thebibliography}{LLSS23}

\bibitem[ACG11]{ACG11}
Enrico Arbarello, Maurizio Cornalba, and Phillip~A. Griffiths.
\newblock {\em Geometry of algebraic curves. {V}olume {II}}, volume 268 of {\em Grundlehren der mathematischen Wissenschaften [Fundamental Principles of Mathematical Sciences]}.
\newblock Springer, Heidelberg, 2011.
\newblock With a contribution by Joseph Daniel Harris.

\bibitem[And21]{And21}
Yves Andr\'{e}.
\newblock Normality criteria, and monodromy of variations of mixed {H}odge structures, {A}ppendix {C} to {B}runo {K}ahn's {A}lbanese kernels and {G}riffiths group.
\newblock {\em Tunis. J. Math.}, 3(3):589--656, 2021.

\bibitem[Ara10]{Ara10}
Donu Arapura.
\newblock The {H}odge theoretic fundamental group and its cohomology.
\newblock In {\em The geometry of algebraic cycles}, volume~9 of {\em Clay Math. Proc.}, pages 3--22. Amer. Math. Soc., Providence, RI, 2010.

\bibitem[BHS00]{BHS00}
Indranil Biswas, Yogish~I. Holla, and Georg Schumacher.
\newblock On a characterization of finite vector bundles as vector bundles admitting a flat connection with finite monodromy group.
\newblock {\em Proc. Amer. Math. Soc.}, 128(12):3661--3669, 2000.

\bibitem[CDK95]{CDK95}
Eduardo Cattani, Pierre Deligne, and Aroldo Kaplan.
\newblock On the locus of {H}odge classes.
\newblock {\em J. Amer. Math. Soc.}, 8(2):483--506, 1995.

\bibitem[DE22]{DE22}
Marco D'Addezio and H\'{e}l{\`e}ne Esnault.
\newblock On the universal extensions in {T}annakian categories.
\newblock {\em Int. Math. Res. Not. IMRN}, (18):14008--14033, 2022.

\bibitem[Del71]{Del71}
Pierre Deligne.
\newblock Th\'{e}orie de {H}odge. {II}.
\newblock {\em Inst. Hautes \'{E}tudes Sci. Publ. Math.}, (40):5--57, 1971.

\bibitem[Del87]{Del87}
P.~Deligne.
\newblock Un th\'{e}or{\`e}me de finitude pour la monodromie.
\newblock In {\em Discrete groups in geometry and analysis ({N}ew {H}aven, {C}onn., 1984)}, volume~67 of {\em Progr. Math.}, pages 1--19. Birkh\"{a}user Boston, Boston, MA, 1987.

\bibitem[DM18]{DM82}
P.~Deligne and J.~S. Milne.
\newblock Tannakian categories.
\newblock https://www.jmilne.org/math/xnotes/tc2018.pdf, 2018.

\bibitem[EK24]{EK24}
H{\'e}l{\`e}ne Esnault and Moritz Kerz.
\newblock A non-abelian version of {D}eligne's {F}ixed {P}art {T}heorem.
\newblock {\em arXiv: 2408.13910v4}, 2024.

\bibitem[Fal98]{Fal98}
Gerd Faltings.
\newblock Curves and their fundamental groups (following {G}rothendieck, {T}amagawa and {M}ochizuki).
\newblock Number 252, pages 131--150. 1998.
\newblock S\'{e}minaire Bourbaki. Vol. 1997/98.

\bibitem[Fer20]{Fer20}
Riccardo Ferrario.
\newblock {\em A complex analogue of Grothendieck's section conjecture}.
\newblock PhD thesis, ETH, 2020.

\bibitem[FM12]{FM12}
Benson Farb and Dan Margalit.
\newblock {\em A primer on mapping class groups}, volume~49 of {\em Princeton Mathematical Series}.
\newblock Princeton University Press, Princeton, NJ, 2012.

\bibitem[Hai11]{Hai11}
Richard Hain.
\newblock Rational points of universal curves.
\newblock {\em J. Amer. Math. Soc.}, 24(3):709--769, 2011.

\bibitem[Hit87]{Hit87}
N.~J. Hitchin.
\newblock The self-duality equations on a {R}iemann surface.
\newblock {\em Proc. London Math. Soc. (3)}, 55(1):59--126, 1987.

\bibitem[HM05]{HM05}
Richard Hain and Makoto Matsumoto.
\newblock Galois actions on fundamental groups of curves and the cycle {$C-C^-$}.
\newblock {\em J. Inst. Math. Jussieu}, 4(3):363--403, 2005.

\bibitem[HZ87]{HZ87}
Richard~M. Hain and Steven Zucker.
\newblock Unipotent variations of mixed {H}odge structure.
\newblock {\em Invent. Math.}, 88(1):83--124, 1987.

\bibitem[Jac22]{Jac22}
Emil Jacobsen.
\newblock Malcev completions, hodge theory, and motives.
\newblock {\em arXiv: 2205.13073v5}, 2022.

\bibitem[Kat72]{Kat72}
Nicholas~M. Katz.
\newblock Algebraic solutions of differential equations ({$p$}-curvature and the {H}odge filtration).
\newblock {\em Invent. Math.}, 18:1--118, 1972.

\bibitem[Lam22]{Lam22}
Yeuk Hay~Joshua Lam.
\newblock Motivic local systems on curves and {M}aeda's conjecture.
\newblock {\em arxiv: 2211.06120v1}, 2022.

\bibitem[LL24]{LL24}
Aaron Landesman and Daniel Litt.
\newblock Geometric local systems on very general curves and isomonodromy.
\newblock {\em J. Amer. Math. Soc.}, 37(3):683--729, 2024.

\bibitem[LLSS23]{LLSS}
Wanlin Li, Daniel Litt, Nick Salter, and Padmavathi Srinivasan.
\newblock Surface bundles and the section conjecture.
\newblock {\em Math. Ann.}, 386(1-2):877--942, 2023.

\bibitem[Mal40]{Mal40}
A.~Malcev.
\newblock On isomorphic matrix representations of infinite groups.
\newblock {\em Rec. Math. [Mat. Sbornik] N.S.}, 8(50):405--422, 1940.

\bibitem[Mil79]{Mil79}
John~J. Millson.
\newblock Real vector bundles with discrete structure group.
\newblock {\em Topology}, 18(1):83--89, 1979.

\bibitem[Nor76]{Nor76}
Madhav~V. Nori.
\newblock On the representations of the fundamental group.
\newblock {\em Compositio Math.}, 33(1):29--41, 1976.

\bibitem[Pul88]{Pul88}
Michael~J. Pulte.
\newblock The fundamental group of a {R}iemann surface: mixed {H}odge structures and algebraic cycles.
\newblock {\em Duke Math. J.}, 57(3):721--760, 1988.

\bibitem[Sim92]{Sim92}
Carlos~T. Simpson.
\newblock Higgs bundles and local systems.
\newblock {\em Inst. Hautes \'{E}tudes Sci. Publ. Math.}, (75):5--95, 1992.

\bibitem[Sim97]{Sim97}
Carlos Simpson.
\newblock The {H}odge filtration on nonabelian cohomology.
\newblock In {\em Algebraic geometry---{S}anta {C}ruz 1995}, volume~62 of {\em Proc. Sympos. Pure Math.}, pages 217--281. Amer. Math. Soc., Providence, RI, 1997.

\bibitem[SZ85]{SZ85}
Joseph Steenbrink and Steven Zucker.
\newblock Variation of mixed {H}odge structure. {I}.
\newblock {\em Invent. Math.}, 80(3):489--542, 1985.

\bibitem[Tak83]{Tak83}
Kisao Takeuchi.
\newblock Arithmetic {F}uchsian groups with signature {$(1;e)$}.
\newblock {\em J. Math. Soc. Japan}, 35(3):381--407, 1983.

\bibitem[Voi13]{Voi13}
Claire Voisin.
\newblock Hodge loci.
\newblock In {\em Handbook of moduli. {V}ol. {III}}, volume~26 of {\em Adv. Lect. Math. (ALM)}, pages 507--546. Int. Press, Somerville, MA, 2013.

\bibitem[Xu24]{Xu24}
Simon~Shuofeng Xu.
\newblock Section conjectures over $\mathbb{C}$ and {K}odaira fibrations.
\newblock {\em arxiv: 2407.03248v2}, 2024.

\end{thebibliography}

\end{document}